\documentclass[11pt,a4paper]{article}
\usepackage[margin=3cm]{geometry}
\usepackage{graphicx} 
\usepackage{tikz}
\usepackage{amsmath}
\usepackage{float}
\usepackage{amsfonts}
\usepackage{amssymb}
\usepackage{xcolor}
\usepackage{amsthm}
\usepackage{needspace}
\usepackage{microtype}
\usepackage{hyperref}
\hypersetup{pdftitle={Spectral comparison on Specht modules and mixing of the interchange process},pdfauthor={Gil Alon}}
\allowdisplaybreaks[1]

\newcommand{\norm}[1]{\lVert #1\rVert}
\newcommand{\one}{\mathbf1}
\newcommand{\Om}{\Omega}
\newcommand{\strep}{\mathrm{st}}
\newcommand{\SG}{\mathcal S_G}
\newcommand{\CG}{\mathcal C_G}
\newcommand{\Eswaps}{\mathcal E_G^{\mathrm{swaps}}}

\def\R{\mathbb{R}}

\def\C{\mathbb{C}}

\theoremstyle{plain}
\newtheorem{thm}{Theorem}[section]
\newtheorem{lemma}[thm]{Lemma}

\newtheorem{prop}[thm]{Proposition}
\newtheorem{conj}[thm]{Conjecture}
\newtheorem{innercustomthm}{Theorem}
\newtheorem*{kernelrestatement}{Lemma \ref{lem:zero-eigenvalue}}
\newtheorem*{hksmixingrestatement}{Theorem \ref{thm:mixing-hks}}
\newtheorem*{exclusionrestatement}{Theorem \ref{thm:exclusion-mixing}}
\newtheorem*{comparisonconjecture}{Conjecture \ref{conj:two-thirds}}

\theoremstyle{definition}
\newtheorem{defi}[thm]{Definition}

\newenvironment{customthm}[1]
  {\renewcommand\theinnercustomthm{#1}\innercustomthm}
  {\endinnercustomthm}

\newtheorem{question}[thm]{Question}

\title{Spectral comparison on Specht modules\\and mixing of the interchange process}
\author{Gil Alon}

\begin{document}
\maketitle
\begin{abstract}
We characterize the cases in which the minimal eigenvalues of interchange-process Laplacians on two Specht modules can be compared up to a positive multiplicative constant, independent of the edge rates. In the comparable cases, we establish
an inverse-polynomial lower bound for this constant. We conjecture that the
constant can always be taken to be $2/3$.

We apply these ideas to the mixing times of the interchange and exclusion
processes. Using the comparison method of Hermon, Kozma and Salez, we prove
that our conjecture implies a conjecture of Oliveira on the mixing time of
the interchange process. We also obtain unconditional mixing-time bounds
for both processes.
\end{abstract}

\section{Introduction}
\subsection{Spectral comparison for weighted graphs}
Let $G=([n],w_{ij})$ be a weighted graph with nonnegative weights. The Aldous conjecture, proved by Caputo, Liggett and Richthammer \cite{CLR}, states the following, for any nontrivial partition $\tau \vdash n$:

\begin{equation}\label{eq:aldous}
    \lambda_{\min}(G,\tau) \geq \lambda_{\min} (G,[n-1,1])
\end{equation}

Here $\lambda_{\min}(G,\tau)$ means the minimum eigenvalue of the Laplacian matrix 
\[
L(G,\tau)=\rho_\tau(\sum_{i<j} w_{ij}(id-(ij)))
\]

where $\rho_{\tau}:\mathfrak S_n \rightarrow GL_{d_\tau}(\R)$ is the irreducible representation, or the Specht module, corresponding to $\tau$.

In this setting we can see equation \eqref{eq:aldous} as establishing a relation of spectral dominance between two partitions, or their corresponding irreps. It is only natural to ask whether similar relations hold between other pairs of partitions as well.
This question was considered in the paper \cite{AK} where this spectral dominance order was called the Aldous order. Indeed, it was shown in that paper that such relations hold for additional families of pairs of partitions $\tau, \sigma$. However, even the larger of these families was restricted in the sense that one of the partitions had to have a very large first row (larger than $n-O(\sqrt n)$) and the other had to have a similarly large first column. 

In this paper we ask the question: What if we relax the spectral dominance condition by a multiplicative constant? More concretely, we ask:

\begin{question}
    \label{question1}
    For which pairs of partitions $\tau, \sigma$, does there exist a constant $C>0$ such that the inequality $\lambda_{\min}(G,\tau)\geq C \lambda_{\min}(G,\sigma)$ holds for \emph {any weighted graph} $G$?
\end{question}

\begin{question}
    \label{question2}
     If such a constant $C$ exists for $\tau, \sigma$, what is the \emph{optimal}, i.e., largest, such constant? 
\end{question}

The following theorem provides an answer to the first question:
\begin{thm}\label{thm:comparability}
    Let $\tau,\sigma$ be partitions of $n$. The following are equivalent:
    \begin{enumerate}
        \item There exists $C>0$ such that $\lambda_{\min}(G,\tau)\geq C \lambda_{\min}(G,\sigma)$ for any weighted graph $G$.
        \item $\sigma \trianglerighteq \tau$, where $\trianglerighteq$ is the standard dominance relation among partitions, i.e. $\sum_{i\leq r}\sigma_i \geq \sum_{i \leq r}\tau_i$ for all $r$.
    \end{enumerate}
\end{thm}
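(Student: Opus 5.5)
For the direction $1\Rightarrow 2$ I will argue by contraposition, using graphs whose Laplacian spectrum is explicit: weighted graphs with very unequal weights, so that the graph nearly decomposes into cliques. Suppose $\sigma \not\trianglerighteq \tau$. Then there is an $r$ with $\sum_{i\le r}\tau_i>\sum_{i\le r}\sigma_i$. By the usual duality between dominance and conjugate partitions, this can be rephrased in terms of which Young subgroups $\mathfrak S_\mu=\mathfrak S_{\mu_1}\times\cdots\times\mathfrak S_{\mu_k}$ have fixed vectors in $\rho_\tau$. The key fact is Young's rule: $\rho_\tau$ has a nonzero $\mathfrak S_\mu$-fixed vector iff $\tau\trianglerighteq\mu$. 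Take $\mu=\tau$. Then $\rho_\tau$ has an $\mathfrak S_\tau$-invariant vector, while $\rho_\sigma$ has none, since $\sigma\not\trianglerighteq\tau$.

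Now let $G_\epsilon$ have weight $1$ on all edges inside the blocks of the set partition of $[n]$ of shape $\tau$, and weight $\epsilon$ on all other edges. The $\mathfrak S_\tau$-invariant vector of $\rho_\tau$ is killed by the within-block part of the Laplacian. Hence $\lambda_{\min}(G_\epsilon,\tau)\le \epsilon\,\|\rho_\tau(\text{rest})\|=O(\epsilon)$.

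On the other hand, the within-block part $L_0$ is a positive semidefinite operator on $\rho_\sigma$. Its kernel is exactly the space of $\mathfrak S_\tau$-invariant vectors, because the block transpositions generate $\mathfrak S_\tau$. This kernel is $0$, so $\lambda_{\min}(G_\epsilon,\sigma)\ge\lambda_{\min}(L_0|_{\rho_\sigma})>0$, uniformly in $\epsilon$. Letting $\epsilon\to0$ rules out any constant $C>0$.

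For $2\Rightarrow1$, I plan to reduce to covering relations of dominance. Any $\sigma\trianglerighteq\tau$ is connected by a chain of elementary moves, each moving one box from a lower row to a higher row. Comparability with constants is transitive, multiplying the constants, so it suffices to treat a single covering pair $\sigma\gtrdot\tau$.

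For a covering pair I would aim for an operator inequality of comparison type. The natural tool is to find a $\mathfrak S_n$-module $M$ such that $\rho_\tau$ appears in $M$, and every irreducible constituent of $M$ is comparable to $\sigma$ by known results. Examples are Aldous-type inequalities, or restriction/induction arguments: $\rho_\sigma$ and $\rho_\tau$ share a common constituent after restricting to $\mathfrak S_{n-1}$, or after inducing from it. A cleaner route is by induction on $n$ via the branching rule. Removing vertex $n$ and using an interlacing/octopus-type inequality in the style of Caputo–Liggett–Richthammer relates $\lambda_{\min}(G,\tau)$ to $\lambda_{\min}$ of a reduced graph on $\tau$ minus a box. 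Since a covering pair remains a dominance pair, or becomes equal, after removing corresponding boxes, induction applies. Tracking the loss in the constant at each step should give the inverse-polynomial bound claimed in the abstract.

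The main obstacle is this second direction. The octopus inequality gives the vertex-reduction comparison only in a specific form, so aligning which boxes to remove from $\tau$ and $\sigma$ will require care. I would first try the case where the moved box lies in the last column, so that the removal is compatible with both partitions. I would then handle the general covering move by a separate branching argument, possibly using an energy/test-function comparison with an explicit intertwiner between $\rho_\sigma$ and $\rho_\tau$ inside $\mathrm{Ind}$ from a Young subgroup.
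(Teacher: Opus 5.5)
Your argument for $(1)\Rightarrow(2)$ is correct and is essentially the paper's. The paper takes $\varepsilon=0$, i.e.\ a disjoint union of cliques of sizes $\tau_i$, and uses the same Young's-rule criterion for fixed vectors of a Young subgroup; your perturbation only adds that connected graphs also witness the failure.

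The gap is in $(2)\Rightarrow(1)$, which you do not prove. After reducing to covering pairs you only sketch an induction on $n$ via branching and an octopus-type inequality, and the step you flag as the main obstacle is where this route breaks. The octopus inequality gives $L(G,\rho)\ge L(G^*,\rho)$ in every representation, where $G^*$ is the network reduction at vertex $n$. That yields only a \emph{lower} bound $\lambda_{\min}(G,\tau)\ge\min_b\lambda_{\min}(G^*,\tau\setminus b)$, with the minimum over \emph{all} removable boxes $b$. To close the induction you would also need an \emph{upper} bound $\lambda_{\min}(G,\sigma)\le C'\lambda_{\min}(G^*,\sigma\setminus a)$, for a box $a$ matched to the minimizing $b$. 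The octopus inequality gives nothing in that direction: a test vector $v$ from the $\sigma\setminus a$ component picks up the nonnegative but uncontrolled term $\langle v,(L(\text{star})-L(\text{mesh}))v\rangle$. For $\sigma=[n-1,1]$ this upper bound comes from the fact that the random-walk gap does not decrease under network reduction, the second ingredient of the Caputo--Liggett--Richthammer induction. You have no analogue of this for general $\sigma$. Separately, multiplying per-step constants along a chain of covering moves whose length grows with $n$ would not give an inverse-polynomial constant unless the steps are nearly lossless; for this theorem, though, any $C>0$ suffices.

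The missing idea is that you need no comparison between $V_\tau$ and $V_\sigma$ beyond the kernel criterion you already used, provided you induct on the number of positive-weight edges of $G$ rather than on $n$.
If $\lambda_{\min}(G,\tau)=0$, the partition $\pi$ of component sizes satisfies $\tau\trianglerighteq\pi$. Hence $\sigma\trianglerighteq\pi$ and $\lambda_{\min}(G,\sigma)=0$.
Otherwise, let $F$ be an unweighted spanning forest of the support of $G$ with the same components, so $\lambda_{\min}(F,\tau)>0$. Restricting to $\mathfrak S_{B_1}\times\cdots\times\mathfrak S_{B_r}$, one factor per component, and using CLR plus a path-counting bound on each component gives $\lambda_{\min}(F,\tau)\ge 2/n^2$. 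Trivially, $\lambda_{\max}(F,\sigma)\le 2(n-1)$.
Let $\varepsilon$ be the smallest weight of $G$ on the edges of $F$. Then $G-\varepsilon F$ has fewer positive-weight edges, and the two inequalities
$\lambda_{\min}(G,\tau)\ge\lambda_{\min}(G-\varepsilon F,\tau)+\varepsilon\lambda_{\min}(F,\tau)$ and $\lambda_{\min}(G,\sigma)\le\lambda_{\min}(G-\varepsilon F,\sigma)+\varepsilon\lambda_{\max}(F,\sigma)$
transfer $\lambda_{\min}(\cdot,\tau)\ge n^{-3}\lambda_{\min}(\cdot,\sigma)$ from $G-\varepsilon F$ to $G$.
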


Let us define for any nontrivial partitions $\tau,\sigma\vdash n$,
\[
    C(\tau,\sigma) = \inf_{G:\lambda_{\min}(G,\sigma)>0}\frac{\lambda_{\min}(G,\tau)}{\lambda_{\min}(G,\sigma)}
\]

By Theorem \ref{thm:comparability}, we have $C(\tau,\sigma)>0$ when $[n]\neq\sigma\trianglerighteq\tau$. Question \ref{question2} is thus equivalent to finding the numbers $C(\tau,\sigma)$.

We shall prove the following inverse-polynomial bound for these numbers:
\begin{thm}\label{thm:graph-polynomial-bound}
    Let $\tau,\sigma$ be nontrivial partitions of $n$ such that $\sigma \trianglerighteq \tau$. Then
    \[
        C(\tau,\sigma)\geq \frac{1}{n^3}.
    \]
\end{thm}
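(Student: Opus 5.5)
Since the dominance order is generated by elementary moves, in which a single box is moved from a row $i$ to a lower row $j>i$ and the result is still a partition, the natural plan is to reduce to a comparison along a chain. The naive approach of multiplying constants along a chain from $\tau$ up to $\sigma$ fails, because chains can have length of order $n^2$ or more and the product would be exponentially small. Instead I would use the Aldous conjecture \eqref{eq:aldous} as an anchor. It gives $\lambda_{\min}(G,\tau)\ge\lambda_{\min}(G,[n-1,1])$ for every nontrivial $\tau$. It therefore suffices to prove that for every nontrivial $\sigma$,
\[
\lambda_{\min}(G,\sigma)\le n^3\,\lambda_{\min}(G,[n-1,1]),
\]
and then to finish more carefully for the general pair. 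In fact this inequality alone already gives $C(\tau,\sigma)\ge n^{-3}$ for all nontrivial $\tau,\sigma$, not only dominance-related ones. That seems too strong if Theorem \ref{thm:comparability} is sharp, so the true mechanism must use dominance. The inequality must therefore fail for $\sigma$ with a long first column, such as the sign representation, where $\lambda_{\min}=2\sum w_{ij}$ can be huge compared with the spectral gap.

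The actual plan is as follows. Let $\mu=\sigma$ and write $m=\sigma_1$. The key quantity is $\lambda_{\min}(G,\tau)$ for $\tau$ dominated by $\sigma$, so that $\tau_1\le\sigma_1$ and more generally $\tau$ has partial sums at most those of $\sigma$. I would use test vectors to upper-bound $\lambda_{\min}(G,\sigma)$, and a lower bound on $\lambda_{\min}(G,\tau)$ via restriction to Young subgroups. Concretely, for the upper bound: $\rho_\sigma$ appears in the permutation module $M^\sigma$, and by Young's rule every $\tau$ with $\tau\trianglerighteq\sigma$ appears there, while irreducibles appearing in $M^\tau$ are exactly those dominating $\tau$. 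I would take the eigenvector $f$ of $L(G,\tau)$ attaining $\lambda_{\min}(G,\tau)$, realized inside $M^\tau$ (functions on $\tau$-tabloids) and orthogonal to all isotypic components other than $\tau$. I would then push it forward to $M^\sigma$ by the natural coarsening map $M^\tau\to M^\sigma$, which exists since $\sigma\trianglerighteq\tau$ allows merging/moving rows. One can choose a composition of box-moving maps, or more cleanly a Young-subgroup averaging, that is $\mathfrak S_n$-equivariant. The image lies in components dominating $\tau$, and the Rayleigh quotient can only shrink in energy relative to norm up to a factor controlled by how much norm the map loses.

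The real route I would commit to is the intermediate inequality for the sum of Dirichlet forms: for $\sigma\trianglerighteq\tau$ there is an equivariant map $\Phi:M^\tau\to M^\sigma$ with $\mathcal E(\Phi f)\le \mathcal E(f)\cdot n$ and $\|\Phi f\|^2\ge n^{-2}\|f\|^2$ on the $\tau$-isotypic part. The energy bound comes from each transposition moving at most a bounded number of labels, and the norm bound is the hard step. Then $\lambda_{\min}$ over the $\sigma$-containing, dominated-by-everything part of $M^\sigma$ is at most $n^3\lambda_{\min}(G,\tau)$. The main obstacle is that the image lands in a sum of irreps $\nu$ with $\nu\trianglerighteq\tau$, not in $\rho_\sigma$ itself. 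I would handle this by induction on dominance, downward from $[n-1,1]$, where the statement is Aldous, so that components $\nu\ne\sigma$ are controlled by earlier cases. Controlling the accumulated constant, via the norm-loss estimate of $\Phi$ on the $\tau$-component, is where I expect the exponent $3$ to be decided, and where I am least confident.
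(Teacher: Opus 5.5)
\textbf{There is a genuine gap: the transfer map you commit to cannot exist.} The obstruction is representation-theoretic, not quantitative. Any $\mathfrak S_n$-equivariant $\Phi:M^\tau\to M^\sigma$ maps the $V_\tau$-isotypic component of $M^\tau$ into the $V_\tau$-isotypic component of $M^\sigma$. This is Schur's lemma: equivariant maps preserve isotypic type, so they cannot move a $\tau$-vector into components of type $\nu\ne\tau$, and in particular not into $V_\sigma$. You yourself note that $M^\sigma$ contains only irreducibles $\nu\trianglerighteq\sigma$. So $V_\tau$ occurs in $M^\sigma$ only if $\tau\trianglerighteq\sigma$, which together with $\sigma\trianglerighteq\tau$ forces $\tau=\sigma$.

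Hence for $\sigma\ne\tau$ your eigenvector $f$ satisfies $\Phi f=0$ for every equivariant $\Phi$. The norm bound $\norm{\Phi f}^2\ge n^{-2}\norm f^2$, which you flag as the hard step, is therefore false, not merely delicate. Dropping equivariance does not rescue the plan either. It destroys the identity $(I-\rho((ij)))\Phi=\Phi(I-\rho((ij)))$, which your energy bound needs uniformly over all $G$.

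The proposed induction also runs the wrong way. A Rayleigh-quotient bound on $M^\sigma$ (minus constants) only controls $\min_{\nu\trianglerighteq\sigma,\,\nu\ne[n]}\lambda_{\min}(G,\nu)$. Since $\sigma$ is the most dominated constituent, the earlier cases of the theorem bound $\lambda_{\min}(G,\sigma)$ only from below in terms of the other $\lambda_{\min}(G,\nu)$. This is the same obstruction you correctly identified in your Aldous-anchor attempt.

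\textbf{The missing idea: compare by decomposing the graph, not by mapping between modules.} In the paper, dominance enters only through kernels. By Lemma \ref{lem:zero-eigenvalue}, $\ker L(G,\sigma)=V_\sigma^H$ for the Young subgroup $H$ of the components; Frobenius reciprocity and Kostka numbers then apply. So if $\lambda_{\min}(G,\tau)=0$, the component-size partition $\pi$ satisfies $\tau\trianglerighteq\pi$. Then $\sigma\trianglerighteq\pi$, and hence $\lambda_{\min}(G,\sigma)=0$.

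Otherwise, let $F$ be a spanning forest of the positive-weight support and $\varepsilon$ the minimal weight on its edges. For unweighted $F$, restrict $V_\tau$ to the Young subgroup of components, then use CLR together with a path Poincar\'e inequality. This gives $\lambda_{\min}(F,\tau)\ge 2/n^2$. On the other side, trivially $\lambda_{\max}(F,\sigma)\le 2(n-1)$.

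Weyl's inequalities now give
$\lambda_{\min}(G,\tau)\ge\lambda_{\min}(G-\varepsilon F,\tau)+\lambda_{\min}(\varepsilon F,\tau)$ and
$\lambda_{\min}(G,\sigma)\le\lambda_{\min}(G-\varepsilon F,\sigma)+\lambda_{\max}(\varepsilon F,\sigma)$.
So the $n^{-3}$ comparison for $G-\varepsilon F$ implies it for $G$. Since $G-\varepsilon F$ has fewer positive-weight edges, induction on the number of edges closes the argument. The exponent $3$ is $n^2\cdot n$, coming from these two estimates, not from any norm-loss constant.
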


Simulations and partial results suggest that the numbers $C(\tau,\sigma)$ are uniformly bounded below, independently of $n$:
\begin{conj}\label{conj:two-thirds}
    For all $[n] \neq \sigma \trianglerighteq \tau$, we have
    \[
        C(\tau,\sigma)\geq \frac{2}{3}.
    \]
\end{conj}

This conjecture means that an inequality similar to \eqref{eq:aldous} holds for any $\sigma \trianglerighteq \tau$ and any weighted graph $G$:
\begin{equation}\label{eq:two-thirds}
    \lambda_{\min}(G,\tau)\geq \frac{2}{3}\lambda_{\min}(G,\sigma).
\end{equation}
Apart from being a natural extension of the Aldous conjecture, this conjecture would have implications for the mixing time of the interchange process, as detailed below. The bound $2/3$, if proved, will be tight: We can construct examples, for unbounded values of $n$, where $\lambda_{\min}(G,\tau)/\lambda_{\min}(G,\sigma)=2/3$ (see Theorem \ref{thm:sharp-star-family} below).

We will show that \eqref{eq:two-thirds} holds in the following cases:
\begin{enumerate}
    \item $G$ is a complete graph (i.e. $w_{ij}=c$ for all $i,j$).
    \item More generally, $G$ is a quasi-complete graph (i.e. $w_{ij}$ only depends on $\max\{i,j\}$; this notion was introduced in \cite{AK}).
    \item Clearly, \eqref{eq:two-thirds} holds for any graph $G$ when the Aldous order $\sigma \trianglerighteq_{\mathrm{Aldous}} \tau$ holds. This adds the cases proved in \cite{AK} and of course the case $\sigma=[n-1,1]$ covered by the Caputo--Liggett--Richthammer theorem.
\end{enumerate}
Our simulations did not find a counterexample to this conjecture. They included all unweighted graphs for $n \leq 7$, and many weighted graphs chosen at random.

\subsection{Mixing of the interchange and exclusion processes}\label{subsec:intro-mixing}
Let $G$ be a connected weighted graph on $[n]$, with $n\ge2$. In the
continuous-time interchange process, each vertex carries a distinct label,
and the labels at $i$ and $j$ are exchanged at rate $w_{ij}$. Its state
space is the symmetric group $\mathfrak S_n$.
Write $t_{\mathrm{mix}}(G)$ for the worst-case total-variation mixing time
at distance $1/4$, and put
\[
 \gamma=\lambda_{\min}(G,[n-1,1]).
\]
By the Caputo--Liggett--Richthammer theorem \cite{CLR}, this is also the
spectral gap of the interchange process. 
The general spectral gap bound (see, e.g., \cite[Theorem 20.6]{LP})
therefore gives
\[
 t_{\mathrm{mix}}(G)
 =O\left(\frac{\log(n!)}{\gamma}\right)
 =O\left(\frac{n\log n}{\gamma}\right).
\]
Oliveira conjectured that the factor $n$ can be removed
(see \cite[Conjecture 1]{HS} and \cite[Section 10]{Oliveira}):
\begin{conj}[Oliveira]\label{conj:oliveira}
There is a universal constant $A$ such that every connected weighted graph
$G$ on $n\ge2$ vertices satisfies
\[
 t_{\mathrm{mix}}(G)\le A\frac{\log n}{\gamma}.
\]
\end{conj}
Our approach to the mixing time is based on the full spectrum of the
Laplacian on the regular representation of $\mathfrak S_n$. We seek lower bounds
on the minimal eigenvalue of the Laplacian on each irrep, with an emphasis
on better bounds for higher dimensional irreps. Our main estimate in this
direction is the following.

\begin{thm}\label{thm:first-row}
There is a universal constant $c>0$ such that the following holds.
Let $G$ be a weighted graph on $[n]$ and put
$\gamma=\lambda_{\min}(G,[n-1,1])$. If $\tau\vdash n$ has first row
$\tau_1=n-k$, where $1\le k\le\lfloor n/2\rfloor$, then
\[
 \lambda_{\min}(G,\tau)\ge ck\gamma.
\]
\end{thm}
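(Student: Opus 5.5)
We prove the bound by comparing the Laplacian on $\tau$ with a Laplacian on a permutation module where the spectrum can be related to $\gamma$, and then extract a factor of $k$ from a trace-type argument.

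\textbf{Step 1 (reduction to the $k$-subset module).} Let $M^{(n-k,k)}$ be the permutation module on $k$-subsets of $[n]$, or the module of injective $k$-tuples $M^{[n-k,1^k]}$. Since $\tau_1=n-k$, the dominance $\tau \trianglerighteq$ fails in general. However, Young's rule gives that $\tau$ is a constituent of $M^{(n-k,1^k)}$, whose constituents $\mu$ all have $\mu_1\ge n-k$. So it suffices to bound $\lambda_{\min}$ of $L(G)$ on the part of $M^{(n-k,1^k)}$ orthogonal to all constituents with first row $>n-k$. Concretely, this is the subspace $V_k$ of functions $f$ on injective $k$-tuples (equivalently on configurations of $k$ labelled particles) such that summing $f$ over any one coordinate gives $0$.

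\textbf{Step 2 (a $k$-particle Dirichlet form bound).} On $V_k$ the Laplacian is the generator of $k$ labelled particles performing the interchange process. The plan is a "tensorization with exclusion correction" estimate. Write the Dirichlet form as a sum over particles of single-particle jump terms, plus swap terms when two particles exchange. For $f\in V_k$, fix all coordinates but the $j$-th. As a function of the $j$-th particle's position (on the $n-k+1$ free sites), $f$ has mean zero, because it is orthogonal to lower constituents. A single-particle Poincaré inequality on the graph with the other $k-1$ sites deleted would then give energy $\gtrsim \gamma\|f\|^2$ for each $j$. Summing over $j$ gives $k\gamma\|f\|^2$, up to double counting: each edge jump moves at most two particles, so the loss is only a factor of 2.

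\textbf{Step 3 (the obstacle: deleted vertices).} The main difficulty is that the single-particle gap on $G$ with $k-1$ vertices frozen is not bounded by $\gamma$ in general. Frozen vertices can disconnect the graph; for example, for a star with its center occupied, that gap is $0$. The fix is to not freeze the other particles. Instead, we use the interchange dynamics in which particle $j$ may swap with another particle. Thus for particle $j$ we use the Dirichlet form of the full $[n-1,1]$ walk, in which particle $j$ moves across every edge, including edges to occupied sites (realized as swaps). We then need the conditional mean-zero property for this walk. This holds if the projection of $f$ onto "functions of $x_j$ alone, after averaging over the others" vanishes, which is again guaranteed by $f\in V_k$.

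Summing these $k$ single-particle Poincaré inequalities requires controlling how many times each transposition $(ab)$ is counted. Each transposition is counted by at most two particles, since it moves at most two of them. The harder part is that the single-particle inequality only controls the variance of $f$ in the $x_j$ direction, not $\|f\|^2$. Therefore we need $\sum_j \mathrm{Var}_j(f)\ge c\,k\|f\|^2$ for $f\in V_k$. We expect this to follow from an Efron–Stein/Hoeffding-type decomposition on injective tuples, valid with a constant $c$ when $k\le n/2$. This is where the hypothesis $k\le\lfloor n/2\rfloor$ enters, since sampling without replacement keeps the coordinates only mildly dependent. This step is the most delicate, and we would first verify it on the complete graph, where everything is explicit. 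The factor of two lost in the counting, together with the constant from the without-replacement decomposition, produces the universal constant $c$.
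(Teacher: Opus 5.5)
Your Step 1 (working in $V_k$) matches the paper, and your diagnosis of the obstacle is right. The gap is in Step 3, and it is the whole difficulty. The walk in which particle $j$ crosses every edge and swaps with whatever it meets is a chain on all of $\Om_k$. Only its image under $x\mapsto x_j$ is the random walk on $G$ with gap $\gamma$. So the Poincar\'e inequality at rate $\gamma$ that it supplies applies only to functions of $x_j$ alone. For $k\ge2$, every $f\in V_k$ is orthogonal to those, since $\sum_{x:x_j=a}f(x)=\sum_b(D_sf)(b)=0$ for any $s\ne j$. Off that subspace nothing ties the chain's gap to $\gamma$. The chain is typically reducible: on a path, particle $j$ can never change the left-to-right order of the other particles. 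Moreover, a bound $\mathcal E_j(f)\ge c\gamma\norm{f}^2$ on $V_k$ would, summed over $j$, already be the theorem.

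Your two halves also use incompatible notions of ``variance in the $x_j$ direction''. The averaged one from Step 3 vanishes identically on $V_k$. The frozen one (other coordinates fixed) satisfies $\sum_j\mathrm{Var}_j(f)=k\norm{f}^2$ exactly, for every $k$ (this is \eqref{eq:totalnorm}), because the frozen slices are mean-zero and, for fixed $j$, partition $\Om_k$. So no Efron--Stein argument is needed, and $k\le n/2$ plays no role in your scheme. That is a warning sign: a linear bound for all $k$ would give Oliveira's conjecture via Proposition~\ref{prop:linear-to-mixing}.

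The paper handles your Step 3 obstacle differently. It keeps the frozen slices but extends them by zero to the frozen sites, so each $g_{r,b}$ is a zero-sum vector on all of $[n]$ and the full gap $\gamma$ applies. The exact identity $\mathcal E_{G,\rho_k}(f)=\SG(f)-\CG(f)+\Eswaps(f)$ then yields $\lambda_{\min}(G,V_k)\ge k\gamma-c_k(G)$, where $c_k(G)$ is the largest internal weight of a $k$-set. This error term is exactly your obstruction: for the unit-weight star, $c_k=2(k-1)$ and the bound is vacuous. The paper removes it by changing the graph rather than the dynamics. The heat-kernel graph $G_t$ with $t=(\log 9)/\gamma$ has $\Delta(G_t)\le1$ and $\gamma(G_t)=8/9$. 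Lemma~\ref{lem:small} then gives $c_k(G_t)\le k-\tfrac89k(1-\tfrac kn)\le\tfrac59k$, which is the only place $k\le n/2$ enters. Hence $\lambda_{\min}(G_t,V_k)\ge k/3$. Finally, the Hermon--Kozma--Salez comparison $L(G_t,\rho)\le t\,L(G,\rho)$ (Lemma~\ref{lem:comparison}) transfers this back to $G$ with $c=1/(6\log 3)$.
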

The proof of Theorem \ref{thm:first-row} is based on the dynamical
comparison principle of Hermon--Kozma--Salez \cite[Proposition 3]{HKS}.
Following their proof strategy, we compare $G$ with its heat-kernel graph
$G_t$ (defined in subsection \ref{subsec:heat}), which is sufficiently close to the normalized complete
graph at a constant multiple of the relaxation time $1/\gamma$.

The restriction $k\le\lfloor n/2\rfloor$ in Theorem \ref{thm:first-row}
can be removed if the $2/3$ conjecture is assumed. Indeed, let $c$ be the
constant in Theorem \ref{thm:first-row}
and put $m=\lfloor n/2\rfloor$. If $k>n/2$, the balanced
two-row partition $\sigma=[n-m,m]$ dominates $\tau$. Hence
\[
 \lambda_{\min}(G,\tau)
 \ge\frac23\lambda_{\min}(G,\sigma)
 \ge\frac{2c}{3}m\gamma
 \ge\frac{c}{3}k\gamma.
\]
We thus get a linear lower bound for every nontrivial partition.
The resulting spectral estimate yields the following mixing bound.

\begin{thm}\label{thm:mixing}
Assume Conjecture \ref{conj:two-thirds}. There is a universal constant
$C>0$ such that every connected weighted graph $G$ on $n\ge2$ vertices,
with ordinary spectral gap $\gamma$, satisfies
\[
 t_{\mathrm{mix}}(G)\le C\frac{\log n}{\gamma}.
\]
In other words, Conjecture \ref{conj:two-thirds} implies Oliveira's
conjecture (Conjecture \ref{conj:oliveira}).
\end{thm}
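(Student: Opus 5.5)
We use the standard $L^2$ (Plancherel) bound for random walks on groups. The interchange process is a random walk on $\mathfrak S_n$ driven by the measure $\sum_{i<j}w_{ij}\delta_{(ij)}$. Its generator is symmetric and conjugation-invariant in the relevant sense: the Laplacian acts on each irrep $\rho_\tau$ by the self-adjoint matrix $L(G,\tau)$. Starting from any $\pi$, the distribution $P_t$ satisfies
\[
4\norm{P_t-U}_{TV}^2\le \sum_{\tau\neq[n]} d_\tau \,\mathrm{tr}\, e^{-2tL(G,\tau)}\le \sum_{\tau\ne[n]} d_\tau^2\, e^{-2t\lambda_{\min}(G,\tau)} .
\]
Here $U$ is uniform and $d_\tau=\dim\rho_\tau$. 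This is the usual Diaconis--Shahshahani upper bound lemma, and it holds for non-class measures via the Fourier transform. The task thus reduces to choosing $t=C\log n/\gamma$ and showing the right-hand side is at most $1/4$.

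\textbf{Step 1: eigenvalue bound.} Assuming Conjecture \ref{conj:two-thirds}, the discussion following Theorem \ref{thm:first-row} gives a universal $c'>0$ (namely $c'=c/3$) such that $\lambda_{\min}(G,\tau)\ge c'k(\tau)\gamma$ for every nontrivial $\tau$. Here $k(\tau)=n-\tau_1\in[1,n-1]$.

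\textbf{Step 2: counting.} We group partitions by $k=n-\tau_1$ and need $S_k:=\sum_{\tau:\,\tau_1=n-k} d_\tau^2$. Removing the first row and using the hook length formula gives $d_\tau\le\binom nk d_{\bar\tau}$, where $\bar\tau\vdash k$ is $\tau$ with its first row deleted. Hence
\[
S_k\le \binom nk^2\sum_{\bar\tau\vdash k}d_{\bar\tau}^2=\binom nk^2 k!\le \frac{n^{2k}}{k!}\le n^{2k}.
\]
This is the standard estimate from Diaconis--Shahshahani for random transpositions. It is valid for every $k$; only when $k>n/2$ do we need Step 1 through the conjecture.

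\textbf{Step 3: summation.} Combining the two steps,
\[
\sum_{\tau\neq[n]} d_\tau^2 e^{-2t\lambda_{\min}(G,\tau)}\le \sum_{k\ge1} n^{2k}e^{-2c'k\gamma t}=\sum_{k\ge1}\bigl(n^{2}e^{-2c'\gamma t}\bigr)^k .
\]
Take $t=C\log n/\gamma$ with $C$ chosen so that $2c'C\ge 4$, say. The ratio is then at most $n^{-2}\le 1/4$, so the sum is at most $\tfrac{4}{3}n^{-2}$. This is below $1$ once $n\ge2$, and after adjusting $C$ it is at most $1/4$ for all $n\ge 2$. This gives $\norm{P_t-U}_{TV}\le 1/4$ uniformly in the starting state, so $t_{\mathrm{mix}}(G)\le C\log n/\gamma$.

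\textbf{Main obstacle.} The only delicate point is Step 2: the bound $d_\tau\le\binom nk d_{\bar\tau}$ must hold for arbitrary $\bar\tau$, including when $\bar\tau$'s first row exceeds $n-k$. Strictly, $\tau$ is a partition only when $\bar\tau_1\le n-k$, but we simply overcount. The bound itself follows because a standard tableau of $\tau$ is determined by the set of entries in rows $\ge2$ together with a standard filling of $\bar\tau$ by them. Everything else is routine once the linear-in-$k$ eigenvalue bound of Step 1 is available for all $k$, which is exactly what the conjecture supplies.
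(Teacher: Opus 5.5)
Your proposal is correct and follows essentially the same route as the paper: the $L^2$ upper bound (the paper's Lemma \ref{lem:group-action-mixing}, giving $4d_G(t)^2\le\sum_{\tau\ne[n]}d_\tau^2e^{-2t\lambda_{\min}(G,\tau)}$), the conditional linear bound $\lambda_{\min}(G,\tau)\ge (c/3)(n-\tau_1)\gamma$ obtained from Theorem \ref{thm:first-row} together with the conjecture applied to $\sigma=[n-\lfloor n/2\rfloor,\lfloor n/2\rfloor]$, grouping by $k=n-\tau_1$, and summing a geometric series. The only difference is the layer count: the paper gets $\sum_{\tau_1=n-k}d_\tau^2\le n^{2k}$ by embedding every such $V_\tau$ in the ordered-tuple module $M_k$ of dimension at most $n^k$, while your tableau count $d_\tau\le\binom nk d_{\bar\tau}$ gives the slightly sharper $n^{2k}/k!$, which does not change the result.
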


We also note the following unconditional bound, which is a direct consequence of the Hermon--Kozma--Salez comparison method:

\begin{thm}\label{thm:mixing-hks}
There is a universal constant $C>0$ such that every connected weighted
graph $G$ on $n\ge2$ vertices, with ordinary spectral gap $\gamma$, satisfies
\[
 t_{\mathrm{mix}}(G)\le C\frac{(\log n)^2}{\gamma}.
\]
\end{thm}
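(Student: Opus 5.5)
We prove Theorem \ref{thm:mixing-hks} by an $L^2$ (spectral) bound on the total-variation distance, fed by an unconditional linear-in-$k$ eigenvalue bound that only needs Theorem \ref{thm:first-row} and a halving argument. The starting point is the standard Fourier bound for a random walk on $\mathfrak S_n$ driven by a class of conjugation-invariant generators. For every starting state and every $t$,
\[
 4\,\| P_t(x,\cdot)-U\|_{TV}^2\le \sum_{\tau\neq[n]} d_\tau\,\mathrm{tr}\bigl(e^{-2tL(G,\tau)}\bigr)\le \sum_{\tau\neq[n]} d_\tau^2\, e^{-2t\lambda_{\min}(G,\tau)} .
\]
This holds because $L$ acts on the left regular representation, which decomposes as $\bigoplus_\tau d_\tau\rho_\tau$. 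By symmetry under $\tau\mapsto\tau'$ (tensoring with sign does not preserve the spectrum, so we cannot simply use it), we instead split partitions according to their first row $\tau_1=n-k$.

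The key step is a lower bound $\lambda_{\min}(G,\tau)\ge c'\,\frac{k}{\log n}\gamma$, or at least $\ge c'\gamma\min(k,\cdot)$, valid for \emph{all} $k$ and not only $k\le n/2$. Theorem \ref{thm:first-row} already gives $ck\gamma$ for $k\le\lfloor n/2\rfloor$. For $k>n/2$, we would avoid Conjecture \ref{conj:two-thirds} and aim only for the weaker bound $\lambda_{\min}(G,\tau)\ge c\,\gamma\cdot n/\log n$, or even just $\ge c''\gamma$ (the Aldous/CLR bound \eqref{eq:aldous}), and pay for this with the extra $\log n$ factor in time. Concretely, the plan is the following.

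\emph{Small $k$.} For $k\le n/2$, use $d_\tau\le \binom{n}{k}\sqrt{k!}\le n^{k}$, so that $\sum_{\tau_1=n-k} d_\tau^2\le p(k)\,n^{2k}\le n^{3k}$. With $t=A\log n/\gamma$ and $\lambda_{\min}\ge ck\gamma$, the $k$-th term is at most $n^{3k-2Ack}$. These terms sum to $o(1)$ once $A$ is a large constant. This part costs only $\log n/\gamma$.

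\emph{Large $k$.} For $k>n/2$ there are at most $n!$ such states in total, since $\sum d_\tau^2\le n!$. By \eqref{eq:aldous}, $\lambda_{\min}(G,\tau)\ge\gamma$, so these terms contribute at most $n!\,e^{-2t\gamma}\le e^{n\log n-2t\gamma}$. That would force $t\sim n\log n/\gamma$, which is too costly. So instead we iterate the HKS comparison as in Theorem \ref{thm:first-row}: compare $G$ with the heat-kernel graph $G_s$ at $s=\Theta(1/\gamma)$, which is close to the normalized complete graph $K$. For the complete graph, $\lambda_{\min}(K,\tau)$ equals a content sum and is $\gtrsim k$ for every $\tau$ with $\tau_1=n-k$, even for $k>n/2$ (including the columnlike partitions, where it is $\Theta(n)$). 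The HKS comparison loses a factor only through the closeness of $G_s$ to $K$. We expect that applying it with $s=\Theta(\log n/\gamma)$, where the closeness is $1/\mathrm{poly}(n)$ and not merely a constant, yields $\lambda_{\min}(G,\tau)\ge c\,k\gamma/\log n$ for all $k$. Plugging this into the sum with $t=A(\log n)^2/\gamma$ gives terms bounded by $n^{3k}e^{-2Ack\log n}$, which again sum to $o(1)$.

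The main obstacle is this last point. We need to check that the HKS dynamical comparison, run at time $\Theta(\log n/\gamma)$, gives a bound that loses only a factor $\log n$ relative to the complete graph, uniformly over all irreps including those with $\tau_1\le n/2$. We would first try to use HKS \cite[Proposition 3]{HKS} directly as a black box. It states that $\lambda_{\min}(G,\tau)\ge \lambda_{\min}(G_s,\tau)/s$ up to constants. Combined with entrywise domination $G_s\ge(1-\epsilon)K/n$ for $s\ge C\log n/\gamma$, this gives exactly the $\log n$ loss. We would then verify the domination step via the $L^\infty$ mixing of the single random walk on $G$, which takes time $O(\log n/\gamma)$.
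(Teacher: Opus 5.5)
Your proposal is correct and is essentially the paper's argument. The step you flag as the main obstacle is exactly Proposition \ref{prop:all-rows-unconditional}, and it goes through as you sketch: at $s=\log(2n)/\gamma$ one has $|H_s(i,j)-1/n|\le e^{-s\gamma}=1/(2n)$, so the HKS comparison and edge monotonicity give $sL(G,\tau)\ge \frac{1}{2n}L(K_n,\tau)\ge \frac{k}{4}I$ for every $k$, and the $L^2$ bound with $\sum_{\tau_1=n-k}d_\tau^2\le n^{2k}$ finishes. Consequently your small-$k$ detour through Theorem \ref{thm:first-row} is unnecessary, and the Fourier bound only needs symmetric rates, not conjugation invariance (which fails here unless $G$ is complete).
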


Consider now the $r$-particle exclusion process determined by the weighted graph $G$:
$r$ indistinguishable particles occupy distinct vertices of $G$. A particle at $i$ moves to an empty vertex $j$ at rate $w_{ij}$.
Write $t_{\mathrm{mix}}^{\mathrm{EX}(r)}(G)$ for the worst-case
total-variation mixing time of this process at distance $1/4$. It can be determined from the spectrum of the Laplacian on the Specht modules for partitions with two rows. These are covered by Theorem \ref{thm:first-row}, and we get the following unconditional bound:

\begin{thm}\label{thm:exclusion-mixing}
There is a universal constant $C>0$ such that every connected weighted
graph $G$ on $n\ge2$ vertices, with ordinary spectral gap $\gamma$, satisfies
\[
 t_{\mathrm{mix}}^{\mathrm{EX}(r)}(G)
 \le\frac{C\log n}{\gamma}
 \qquad(1\le r\le n-1).
\]
\end{thm}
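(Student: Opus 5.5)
We will bound the total-variation distance by the $L^2$ distance and expand the latter spectrally. The exclusion process with $r$ particles is the interchange process projected to $\mathfrak S_n/(\mathfrak S_r\times\mathfrak S_{n-r})$. By symmetry we may take $r\le n/2$. Its generator is the image of $\sum_{i<j}w_{ij}(id-(ij))$ acting on the permutation module $M^{[n-r,r]}$, and Young's rule gives
\[
 M^{[n-r,r]}\cong\bigoplus_{k=0}^{r}S^{[n-k,k]},
\]
each summand appearing with multiplicity one. Hence the nonzero eigenvalues of the exclusion generator are exactly the eigenvalues of $L(G,[n-k,k])$ for $1\le k\le r$, and $\dim S^{[n-k,k]}=\binom nk-\binom n{k-1}\le\binom nk$. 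The state space is finite and the chain is reversible with respect to the uniform measure $\pi$ on $\binom{[n]}{r}$. Therefore, for every starting configuration $x$,
\[
 4\,\norm{P_t(x,\cdot)-\pi}_{TV}^2\le \frac{P_{2t}(x,x)}{\pi(x)}-1 .
\]
Averaging over $x$ and using transitivity is not available, since $G$ is not vertex-transitive. We therefore use the worst-case bound $\frac{P_{2t}(x,x)}{\pi(x)}-1=\sum_{\ell}e^{-2t\lambda_\ell}\phi_\ell(x)^2$, where $\phi_\ell$ is an orthonormal eigenbasis in $L^2(\pi)$. For any eigenspace $E$ we have $\sum_{\ell\in E}\phi_\ell(x)^2\le\dim E$ for each $x$. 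Indeed, $\sum_{\ell\in E}\phi_\ell(x)^2=\norm{\Pi_E\delta_x/\pi(x)}^2_{L^2(\pi)}$, and the average over $x$ of this quantity equals $\dim E$. We can avoid the averaging issue by grouping by isotypic component. The projection onto the $S^{[n-k,k]}$ component is $\mathfrak S_n$-equivariant, so its diagonal is constant, equal to $\dim S^{[n-k,k]}$, regardless of $G$. This is the key point: the spectral projections of the weighted Laplacian are not equivariant, but the orthogonal sum over the whole component is. Within a component, $e^{-2t\lambda}\le e^{-2t\lambda_{\min}(G,[n-k,k])}$, so summing $e^{-2t\lambda_\ell}\phi_\ell(x)^2$ over that component gives at most $\dim S^{[n-k,k]}\,e^{-2t\lambda_{\min}(G,[n-k,k])}$.

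The resulting bound is
\[
 4\,d(t)^2\le\sum_{k=1}^{r}\binom nk e^{-2t\lambda_{\min}(G,[n-k,k])}\le\sum_{k=1}^{r}\binom nk e^{-2ckt\gamma},
\]
where the last step applies Theorem \ref{thm:first-row} with $\tau=[n-k,k]$. This is legitimate because $\tau_1=n-k$ and $1\le k\le r\le\lfloor n/2\rfloor$. Using $\binom nk\le n^k$, the sum is at most $\sum_{k\ge1}\bigl(ne^{-2c t\gamma}\bigr)^k$. Choose $t=\frac{C\log n}{\gamma}$ with $C$ large, say $2cC\ge 2$, so that $ne^{-2ct\gamma}\le n^{-1}\le\tfrac12$. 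The geometric sum is then at most $2/n\le1$ for $n\ge2$. Enlarging $C$ slightly makes it at most $1/4$, which gives $d(t)\le1/4$.

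The main subtle step is the $L^2$-to-spectrum bound for a non-transitive chain. This is handled by the equivariance of isotypic projections noted above. Equivalently, one can pass to the interchange chain on $\mathfrak S_n$, which is a random walk on a group with identity-independent worst case. Its projection to the exclusion chain contracts TV distance, and its $L^2$ expansion restricted to functions invariant under $\mathfrak S_r\times\mathfrak S_{n-r}$ involves only the two-row components. Everything else reduces to Theorem \ref{thm:first-row} and Young's rule.
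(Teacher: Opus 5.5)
Your proposal is correct and follows essentially the paper's route. The paper uses the same ingredients: the $L^2$ bound with isotypic diagonals made constant by equivariance and transitivity of $\mathfrak S_n$ on $\binom{[n]}{r}$ (Lemma~\ref{lem:group-action-mixing}), the multiplicity-one decomposition of $W_r$ into two-row Specht modules, $d_k\le\binom nk\le n^k$, Theorem~\ref{thm:first-row} for $1\le k\le r\le\lfloor n/2\rfloor$, and a geometric series in $q=ne^{-2ct\gamma}$ with $t$ a constant multiple of $\log n/\gamma$ so that $q\le 1/5$.

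One point should be cut. Your intermediate claim that $\sum_{\ell\in E}\phi_\ell(x)^2\le\dim E$ holds pointwise for an arbitrary eigenspace $E$ does not follow from its average being $\dim E$, and it is false in general. Likewise, the transitivity you need is that of $\mathfrak S_n$ on $r$-sets, not vertex-transitivity of $G$. The isotypic-component argument you actually rely on is exactly right.
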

This bound improves upon the general bound $O((\log n)^2/\gamma)$
following from Oliveira's estimate in \cite{Oliveira} and the bound
$O(\log n \log\log n/\gamma)$ proved by Hermon and Pymar \cite{HP} for regular graphs.

\subsection{The proof ideas}
Theorem \ref{thm:comparability} has two directions. The first implication follows from the following lemma, which may be of independent interest:

\begin{lemma}\label{lem:zero-eigenvalue}
    Let $G=([n],w)$ be a weighted graph, whose connected components have sizes $\tau_1\geq\tau_2\geq\cdots\geq\tau_r>0$. Let $\sigma\vdash n$. Then $\lambda_{\min}(G,\sigma)=0$ if and only if $\sigma\trianglerighteq\tau$.
\end{lemma}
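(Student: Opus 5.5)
The plan is to identify the kernel of $L(G,\sigma)$ with the space of invariants of a Young subgroup, and then to count those invariants using Young's rule.

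First, the Laplacian is a sum of positive semidefinite operators. Choose a $\mathfrak S_n$-invariant inner product on the Specht module $S^\sigma$. Each $\rho_\sigma((ij))$ is then an orthogonal involution, so $\rho_\sigma(id-(ij))$ is positive semidefinite, and
\[
\langle L(G,\sigma)v,v\rangle=\sum_{i<j}w_{ij}\,\langle (id-(ij))v,v\rangle=\frac12\sum_{i<j}w_{ij}\,\norm{v-(ij)v}^2 .
\]
Hence $L(G,\sigma)v=0$ if and only if $(ij)v=v$ for every edge with $w_{ij}>0$.

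Second, I describe the group generated by these edge transpositions. Let $C_1,\dots,C_r$ be the connected components of $G$, with $|C_a|=\tau_a$. For each component, the transpositions along the edges of a connected graph on $C_a$ generate the full symmetric group $\mathfrak S_{C_a}$. One way to see this is to pick a spanning tree; a tree's transpositions generate the symmetric group on its vertices, by the standard induction that conjugates along paths. So the edge transpositions generate the Young subgroup $\mathfrak S_\tau=\mathfrak S_{C_1}\times\cdots\times\mathfrak S_{C_r}$, and therefore
\[
\ker L(G,\sigma)=(S^\sigma)^{\mathfrak S_\tau}.
\]

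Third, I compute the dimension of this invariant space. By Frobenius reciprocity,
\[
\dim (S^\sigma)^{\mathfrak S_\tau}=\langle \mathrm{Res}\, S^\sigma,\one\rangle_{\mathfrak S_\tau}=\langle S^\sigma,\mathrm{Ind}_{\mathfrak S_\tau}^{\mathfrak S_n}\one\rangle=\langle S^\sigma,M^\tau\rangle,
\]
where $M^\tau$ is the permutation module. Young's rule gives this multiplicity as the Kostka number $K_{\sigma\tau}$, the number of semistandard tableaux of shape $\sigma$ and content $\tau$. It is classical that $K_{\sigma\tau}>0$ if and only if $\sigma\trianglerighteq\tau$. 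Positivity under dominance follows from a standard construction of a semistandard tableau, for instance by induction that removes a horizontal strip. Conversely, in a semistandard tableau all entries $\le r$ lie in the first $r$ rows, which gives $\tau_1+\cdots+\tau_r\le\sigma_1+\cdots+\sigma_r$. Since $L(G,\sigma)$ is positive semidefinite, $\lambda_{\min}(G,\sigma)=0$ exactly when its kernel is nonzero, and this completes the proof.

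The only delicate points are bookkeeping. The orthogonality of $\rho_\sigma((ij))$ needs an invariant inner product, which one can always average to obtain. Isolated vertices must be counted as components of size $1$, and the reordering of the components is harmless because Young subgroups for rearrangements of $\tau$ are conjugate. The statement $K_{\sigma\tau}>0\iff\sigma\trianglerighteq\tau$ is the main ingredient, and I would cite it as standard (for example from Sagan or Macdonald) rather than reprove it.
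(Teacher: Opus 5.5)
Your proposal is correct and follows essentially the same route as the paper. You identify $\ker L(G,\sigma)$ with the invariants of the Young subgroup generated by the positive-weight edge transpositions, then apply Frobenius reciprocity and Young's rule, using the fact that $K_{\sigma\tau}>0$ if and only if $\sigma\trianglerighteq\tau$. The only difference is that you also sketch the easy half of this Kostka criterion, which the paper simply cites from Sagan.
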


For $\sigma=[n-1,1]$, Lemma \ref{lem:zero-eigenvalue} is equivalent to the fact that $G$ is connected if and only if $\lambda_{\min}(G,\sigma)\neq0$. For a general nontrivial $\sigma$, it describes the condition that $\lambda_{\min}(G,\sigma)\neq0$ in terms of weaker connectivity notions.

Lemma \ref{lem:zero-eigenvalue} is the main resource in proving the direction $(1)\Rightarrow(2)$ in Theorem \ref{thm:comparability}. It implies, in particular, that if $\sigma\not\trianglerighteq\tau$, then for a graph $G$ whose connected component sizes are given by $\tau$, we have $\lambda_{\min}(G,\tau)=0$ while $\lambda_{\min}(G,\sigma)\neq0$.

The second implication in Theorem \ref{thm:comparability} follows from the $1/n^3$ bound in Theorem \ref{thm:graph-polynomial-bound}. The proof of this bound uses the concept of a reducing graph introduced in \cite{AK}. A reducing graph $H$ for the partitions $\sigma,\tau$ had the property that given a weighted graph $G$, if $H\leq G$ and $\lambda_{\min}(G-H,\sigma)\leq\lambda_{\min}(G-H,\tau)$, then $\lambda_{\min}(G,\sigma)\leq\lambda_{\min}(G,\tau)$. Therefore, in proving that $\sigma$ spectrally dominates $\tau$, we could assume that $G$ does not contain copies of $H$.

We develop a parallel concept for inequalities of the form $\lambda_{\min}(G,\sigma)\leq C\lambda_{\min}(G,\tau)$ and use it to prove Theorem \ref{thm:graph-polynomial-bound}.

For quasi-complete graphs, we use the fact that the Laplacian is diagonal
in the Gelfand--Tsetlin basis and there are well known formulas for the corresponding eigenvalues. Given these explicit
eigenvalues, we prove the $2/3$ bound by a combinatorial argument.

For the minimal eigenvalue estimate in Theorem \ref{thm:first-row}, we
start by proving
\[
 \lambda_{\min}(G,\tau)\ge k\gamma-c_k(G),
\]
where $k=n-\tau_1$ and the error term is given by
\[
 c_k(G)=\max_{\substack{B\subseteq[n]\\|B|=k}}
        \sum_{\substack{i,j\in B\\i\ne j}}w_{ij}.
\]
This inequality is proved directly by a Rayleigh quotient comparison.
We then apply the argument of Hermon--Kozma--Salez: replace $G$ by its
heat-kernel graph $G_t$ at time $t=(\log9)/\gamma$. For this graph, the
error term becomes small enough, while the loss in the minimal eigenvalue
is controlled by the dynamical comparison principle.

The remaining mixing time results follow from the eigenvalue bounds,
using standard bounds on the total variation distance.

\paragraph{Acknowledgements.}
The author is grateful to Gady Kozma, Subhajit Ghosh and Doron Puder for helpful discussions. The proofs in this paper were partially assisted by ChatGPT Pro and Astra models, and so were the simulations behind Conjecture \ref{conj:two-thirds}. The author is responsible for the mathematical arguments and their verification.

\section{Preliminaries}
We denote by $\mathfrak S_n$ the symmetric group on $n$ elements. The irreducible representations (or irreps) of this group are parametrized by partitions of $n$, which also correspond to Young diagrams of size $n$. For a partition $\tau \vdash n$, we let $V_\tau$ be the corresponding Specht module, an $\mathbb R[\mathfrak S_n]$ module which realizes the corresponding irrep. It is equipped with an invariant inner product, and we denote by  $\rho_\tau$ the resulting orthogonal representation.
For any orthogonal representation $\rho$ of $\mathfrak S_n$, set
\begin{equation} \label{eq:laplacian_def}
 L(G,\rho)=\sum_{i<j}w_{ij}\bigl(I-\rho((ij))\bigr) 
\end{equation}
and
\[
 \mathcal E_{G,\rho}(f)=\langle f,L(G,\rho)f\rangle.
\]

$L(G,\rho)$ is the \emph{Laplacian operator} 
attached to $G$ at $\rho$, and $ \mathcal E_{G,\rho}(f)$ is the \emph{Dirichlet energy} of $f$ at $G,\rho$. For $f\ne0$, the corresponding Rayleigh quotient is
\[
 R_{G,\rho}(f)=\frac{\mathcal E_{G,\rho}(f)}{\norm f^2}.
\]
The minimum eigenvalue is the minimum of this quotient:
\[
 \lambda_{\min}(G,\rho)=\min_{f\ne0}R_{G,\rho}(f).
\]
We also write $L(G,\tau)$ and $\lambda_{\min}(G,\tau)$, where the
partition $\tau\vdash n$ stands for the corresponding irreducible
representation $\rho_\tau$. We use a representation space in place of
$\rho$ when its action is understood. The analogous notation $\lambda_{\max}$ denotes
the largest eigenvalue. Each $I-\rho((ij))$ is positive semidefinite, so all
these Laplacians are positive semidefinite.

Write $\strep$ for the standard representation of $\mathfrak S_n$
on $\R^n$, in which the symmetric group acts by permuting the
coordinates. It decomposes as the orthogonal sum of the trivial
representation $\mathrm{triv}=\mathbb R\one$ and
$V_{[n-1,1]}=\one^\perp$.
The operator $L(G,\strep)$ is the ordinary graph Laplacian.
For every $g\in\R^n$,
\[
 \mathcal E_{G,\strep}(g)
 =\sum_{i<j}w_{ij}\bigl(g(i)-g(j)\bigr)^2.
\]
We write $\gamma(G)=\lambda_{\min}(G,[n-1,1])$.
It is thus the spectral gap of the ordinary
graph Laplacian.

All inner products on finite function spaces are the standard inner products
unless stated otherwise.
All logarithms are natural.

For self-adjoint operators on the same space, $A \le B$ means that
$\langle f,Af\rangle\le\langle f,Bf\rangle$ for every $f$.
In particular, increasing edge weights increases each representation
Laplacian in this order.

\section{General results for weighted graphs}
We start by proving the characterization of weighted graphs $G$ and partitions $\sigma$ for which $\lambda_{\min}(G,\sigma)=0$.

\begin{kernelrestatement}
    Let $G=([n],w)$ be a weighted graph, whose connected components have sizes $\tau_1\geq\tau_2\geq\cdots\geq\tau_r>0$. Let $\sigma\vdash n$. Then $\lambda_{\min}(G,\sigma)=0$ if and only if $\sigma\trianglerighteq\tau$.
\end{kernelrestatement}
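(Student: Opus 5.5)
Since $L(G,\sigma)$ is a sum of positive semidefinite terms $w_{ij}(I-\rho_\sigma((ij)))$, a vector $f\in V_\sigma$ lies in its kernel if and only if $\rho_\sigma((ij))f=f$ for every edge $ij$ with $w_{ij}>0$. The transpositions along edges of a connected component $C_s$ generate the full symmetric group $\mathfrak S_{C_s}$; this is the standard fact that transpositions along the edges of a connected graph generate the symmetric group on its vertex set. Hence the kernel of $L(G,\sigma)$ is exactly the subspace of vectors fixed by the Young subgroup $\mathfrak S_{C_1}\times\cdots\times\mathfrak S_{C_r}$, which is conjugate to the standard Young subgroup $\mathfrak S_\tau=\mathfrak S_{\tau_1}\times\cdots\times\mathfrak S_{\tau_r}$. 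So $\lambda_{\min}(G,\sigma)=0$ if and only if $V_\sigma^{\mathfrak S_\tau}\neq0$.

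Next I will identify this fixed space with Kostka numbers. By Frobenius reciprocity,
\[
\dim V_\sigma^{\mathfrak S_\tau}=\langle \mathrm{Res}^{\mathfrak S_n}_{\mathfrak S_\tau}V_\sigma,\mathrm{triv}\rangle=\langle V_\sigma,\mathrm{Ind}_{\mathfrak S_\tau}^{\mathfrak S_n}\mathrm{triv}\rangle .
\]
The induced module is the permutation module $M^\tau$. Young's rule gives $M^\tau\cong\bigoplus_\mu K_{\mu\tau}V_\mu$, where the Kostka number $K_{\mu\tau}$ counts semistandard Young tableaux of shape $\mu$ and content $\tau$. Therefore $\lambda_{\min}(G,\sigma)=0$ if and only if $K_{\sigma\tau}>0$.

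The remaining step is the classical fact that $K_{\sigma\tau}>0$ if and only if $\sigma\trianglerighteq\tau$. I regard this as the only substantive combinatorial input, and I would cite it (e.g.\ from Sagan or Macdonald) rather than reprove it, sketching it only briefly.

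For necessity, in any semistandard tableau all entries $\le r$ must lie in the first $r$ rows, because columns strictly increase. This gives $\tau_1+\dots+\tau_r\le\sigma_1+\dots+\sigma_r$.

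For sufficiency, I would argue by induction on dominance. If $\sigma=\tau$, the tableau with row $i$ filled by $i$'s works. Otherwise, one can move a single box to obtain $\sigma\trianglerighteq\sigma'\trianglerighteq\tau$, where $\sigma'$ is obtained from $\sigma$ by a lowering move, and modify a tableau accordingly. Alternatively, I would fill the tableau greedily: place the $\tau_1$ ones in row $1$, then fill row by row with the next entries left-justified. The dominance inequalities are exactly what guarantee that column strictness can be maintained.

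If a self-contained argument is preferred, the necessity direction can also be seen directly. A nonzero vector fixed by $\mathfrak S_\tau$ would be orthogonal to all Young symmetrizer images. This is essentially the proof of Young's rule, so citing the rule is cleaner.

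The main obstacle is therefore only bookkeeping: checking that ``connected components of sizes $\tau_i$'' matches the Young subgroup up to conjugation. This is harmless, since conjugate subgroups have fixed spaces of equal dimension in $V_\sigma$. The argument also covers isolated vertices (components of size $1$), so that $\tau$ is a genuine partition of $n$ after sorting.
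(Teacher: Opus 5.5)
Your proposal is correct and follows the paper's proof essentially step for step. You identify the kernel with the vectors fixed by the Young subgroup generated by the positive-weight edge transpositions, use Frobenius reciprocity and Young's rule to get $K_{\sigma\tau}$, and cite the criterion $K_{\sigma\tau}>0\iff\sigma\trianglerighteq\tau$, exactly as the paper does. One caution on your optional sketch: the greedy row-by-row left-justified filling does not work in general. For $\sigma=[4,1,1]$, $\tau=[2,2,2]$ it places $1,3,3$ in the first column, even though the filling with rows $1,1,2,3$; $2$; $3$ is semistandard. So it is right to rely on the citation rather than on that construction.
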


\begin{proof}
Since each operator $I-\rho_\sigma((ij))$ is positive semidefinite, by \eqref{eq:laplacian_def}
\[
    \ker L(G,\sigma)
    =\bigcap_{w_{ij}>0}\ker\bigl(I-\rho_\sigma((ij))\bigr).
\]
Let $B_1,\ldots,B_r$ be the vertex sets of the connected components of the positive-weight support of $G$, including isolated vertices. The transpositions along the edges of a connected graph generate the full symmetric group on its vertices. Therefore the subgroup generated by the positive-weight edges is
\[
    H=\mathfrak S_{B_1}\times\cdots\times\mathfrak S_{B_r},
\]
and $\ker L(G,\sigma)=V_\sigma^H$.

An $H$-equivariant map from the trivial representation $\mathbf1$ to
$V_\sigma$ is determined by the image of $1$, which can be any $H$-fixed
vector. Thus
\[
 V_\sigma^H\cong\operatorname{Hom}_H(\mathbf1,\operatorname{Res}_H^{\mathfrak S_n}V_\sigma).
\]
By Frobenius reciprocity, this space is isomorphic to
\[
 \operatorname{Hom}_{\mathfrak S_n}\bigl(\operatorname{Ind}_H^{\mathfrak S_n}\mathbf1,V_\sigma\bigr)
 =\operatorname{Hom}_{\mathfrak S_n}(M^\tau,V_\sigma),
\]
where $M^\tau=\operatorname{Ind}_H^{\mathfrak S_n}\mathbf1\cong\R[\mathfrak S_n/H]$ is the
Young permutation module. By complete reducibility and Schur's lemma,
the dimension of this space is the multiplicity of $V_\sigma$ in $M^\tau$.

Young's rule identifies this multiplicity with the Kostka number $K_{\sigma,\tau}$, which is positive if and only if $\sigma\trianglerighteq\tau$; see \cite[Section 2.11]{Sagan}. Since $L(G,\sigma)$ is positive semidefinite, its smallest eigenvalue is zero exactly when its kernel is nonzero. This proves the lemma.
\end{proof}

Our next lemma provides the first direction in Theorem \ref{thm:comparability}.

\begin{lemma}\label{lem:dominance-necessary}
Let $\tau,\sigma\vdash n$. If there exists $C>0$ such that
\[
    \lambda_{\min}(G,\tau)\geq C\lambda_{\min}(G,\sigma)
\]
for every weighted graph $G$ on $[n]$, then $\sigma\trianglerighteq\tau$.
\end{lemma}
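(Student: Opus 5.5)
We argue by contrapositive, using Lemma \ref{lem:zero-eigenvalue} as the only real input. Suppose $\sigma\not\trianglerighteq\tau$. We will exhibit a single weighted graph $G$ on $[n]$ with $\lambda_{\min}(G,\tau)=0$ and $\lambda_{\min}(G,\sigma)>0$. For such a $G$, the inequality $\lambda_{\min}(G,\tau)\ge C\lambda_{\min}(G,\sigma)$ reads $0\ge C\cdot(\text{positive})$, which fails for every $C>0$. This contradicts the hypothesis and forces $\sigma\trianglerighteq\tau$.

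\emph{Construction.} Let $\tau=(\tau_1\ge\cdots\ge\tau_r>0)$. Partition $[n]$ into consecutive blocks $B_1,\dots,B_r$ with $|B_m|=\tau_m$. Let $G$ be the disjoint union of complete graphs on these blocks, with unit weights inside each block and weight zero between blocks. A singleton block is just an isolated vertex, which the lemma explicitly allows. The connected components of $G$ then have sizes exactly $\tau_1\ge\cdots\ge\tau_r$, so the hypothesis of Lemma \ref{lem:zero-eigenvalue} holds with the component-size partition equal to $\tau$.

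\emph{Applying the lemma twice.} First take the lemma's $\sigma$ to be $\tau$ itself. Since $\tau\trianglerighteq\tau$ trivially, we get $\lambda_{\min}(G,\tau)=0$. Next take the lemma's $\sigma$ to be our $\sigma$. Since $\sigma\not\trianglerighteq\tau$, the ``only if'' direction gives $\lambda_{\min}(G,\sigma)\neq0$. Because $L(G,\sigma)$ is positive semidefinite, this means $\lambda_{\min}(G,\sigma)>0$.

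There is no genuine obstacle here. The only points needing care are that the lemma is stated for arbitrary nonnegative weights with isolated vertices allowed, and that the component sizes, listed in nonincreasing order, reproduce $\tau$ exactly; the block construction guarantees both.
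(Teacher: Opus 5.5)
Your proof is correct and is essentially the paper's argument: take $G$ to be the disjoint union of complete graphs with component sizes given by the parts of $\tau$, and apply Lemma \ref{lem:zero-eigenvalue} to get $\lambda_{\min}(G,\tau)=0<\lambda_{\min}(G,\sigma)$, which contradicts the inequality for every $C>0$. You also spell out positive semidefiniteness to upgrade $\neq0$ to $>0$, which the paper leaves implicit.
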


\begin{proof}
Suppose that $\sigma\not\trianglerighteq\tau$. Let $G$ be a disjoint union of complete graphs whose sizes are the parts of $\tau$. By Lemma \ref{lem:zero-eigenvalue},
\[
    \lambda_{\min}(G,\tau)=0\qquad\text{and}\qquad \lambda_{\min}(G,\sigma)>0.
\]
This contradicts the assumed inequality.
\end{proof}

For the second direction in Theorem \ref{thm:comparability}, we will use an analog of the notion of a reducing graph in \cite{AK}.

\begin{defi}\label{def:alpha-reducing}
Let $\tau,\sigma\vdash n$ and $\alpha>0$. A weighted graph $H$ on $[n]$ is called an \emph{$\alpha$-reducing graph} for $(\tau,\sigma)$ if
\[
    \lambda_{\min}(H,\tau)\geq\alpha\lambda_{\max}(H,\sigma),
\]
where $\lambda_{\max}(H,\sigma)$ denotes the largest eigenvalue of $L(H,\sigma)$.
\end{defi}

\begin{lemma}\label{lem:alpha-reduction}
Let $\alpha>0$ and $\tau,\sigma\vdash n$. Let $H$ be an $\alpha$-reducing graph for $(\tau,\sigma)$ and let $G$ be a weighted graph with $H\leq G$ edgewise. If
\[
    \lambda_{\min}(G-H,\tau)\geq\alpha\lambda_{\min}(G-H,\sigma),
\]
then
\[
    \lambda_{\min}(G,\tau)\geq\alpha\lambda_{\min}(G,\sigma).
\]
Consequently, if $H$ has a positive-weight edge, it suffices to prove this comparison for graphs whose positive-weight support contains no copy of the positive-weight support of $H$.
\end{lemma}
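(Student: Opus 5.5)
The plan is to decompose the Laplacian additively and combine two Rayleigh-quotient bounds with Weyl's inequality. Write $G=H+(G-H)$ edgewise. Since the Laplacian in \eqref{eq:laplacian_def} is linear in the weights, we have for every representation
\[
 L(G,\rho)=L(H,\rho)+L(G-H,\rho).
\]
The key steps, in order, are as follows.

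First I bound the $\tau$ side from below. Take a unit vector $f\in V_\tau$ attaining $\lambda_{\min}(G,\tau)$. Then
\[
 \lambda_{\min}(G,\tau)=\langle f,L(H,\tau)f\rangle+\langle f,L(G-H,\tau)f\rangle\ge \lambda_{\min}(H,\tau)+\lambda_{\min}(G-H,\tau).
\]

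Second, I apply the $\alpha$-reducing hypothesis of Definition \ref{def:alpha-reducing} to the first term and the assumed comparison for $G-H$ to the second term. This gives
\[
 \lambda_{\min}(G,\tau)\ge \alpha\bigl(\lambda_{\max}(H,\sigma)+\lambda_{\min}(G-H,\sigma)\bigr).
\]

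Third, I bound the $\sigma$ side from above. Take a unit $g\in V_\sigma$ minimizing the Rayleigh quotient for $G-H$ on $\sigma$. Then
\[
 \lambda_{\min}(G,\sigma)\le\langle g,L(H,\sigma)g\rangle+\langle g,L(G-H,\sigma)g\rangle\le \lambda_{\max}(H,\sigma)+\lambda_{\min}(G-H,\sigma).
\]
Multiplying this by $\alpha$ and combining it with the second step yields $\lambda_{\min}(G,\tau)\geq\alpha\lambda_{\min}(G,\sigma)$.

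For the ``consequently'' clause, I plan an induction that strips copies of $H$. Let $G$ be arbitrary and suppose its support contains a copy $\pi(H)$ of the support of $H$, for some relabeling $\pi\in\mathfrak S_n$. The relabeled graph $\pi(H)$ is again $\alpha$-reducing. This holds because conjugating by $\rho(\pi)$ identifies $L(\pi(H),\rho)$ with $L(H,\rho)$ unitarily, so its eigenvalues are unchanged.

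Scale $\pi(H)$ by the largest $t>0$ with $t\,\pi(H)\le G$. Positive scaling preserves the reducing inequality, since both sides are homogeneous of degree one. With this choice of $t$, the graph $G-t\pi(H)$ has strictly fewer positive-weight edges, because at least one edge of the copy has its weight reduced to zero.

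Iterating this finitely many times reaches a graph with no copy of the support of $H$. For that graph the comparison is assumed. Applying the first part at each step backwards then gives the comparison for $G$.

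The main obstacle is essentially bookkeeping rather than a hard estimate. One must check that the eigenvalue inequalities use a single minimizer on the correct side each time: the minimizer for $G$ on $\tau$, and the minimizer for $G-H$ on $\sigma$. One must also check that the scaling and relabeling steps preserve the $\alpha$-reducing property and that the induction terminates. Termination holds because the number of positive edges strictly decreases at each step.
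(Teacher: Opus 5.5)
Your proof is correct and matches the paper's. Both use the same additive splitting $L(G,\rho)=L(H,\rho)+L(G-H,\rho)$, the lower bound $\lambda_{\min}(G,\tau)\ge\lambda_{\min}(H,\tau)+\lambda_{\min}(G-H,\tau)$, and the upper bound on $\lambda_{\min}(G,\sigma)$ from the $G-H$ minimizer on $\sigma$. Your stripping induction for the ``consequently'' clause (relabel $H$, scale it by the maximal $t$, subtract) is also sound; the paper does not argue that clause separately, but it runs the same reduction with $\varepsilon F$ in the proof of Theorem~\ref{thm:graph-polynomial-bound}.
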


\begin{proof}
Let $v$ be a unit eigenvector of $L(G-H,\sigma)$ with eigenvalue $\lambda_{\min}(G-H,\sigma)$. Since $L(G,\sigma)=L(G-H,\sigma)+L(H,\sigma)$, the Rayleigh quotient gives
\begin{align*}
    \lambda_{\min}(G,\sigma)
    &\leq\langle v,L(G,\sigma)v\rangle\\
    &=\langle v,L(G-H,\sigma)v\rangle+\langle v,L(H,\sigma)v\rangle\\
    &\leq\lambda_{\min}(G-H,\sigma)+\lambda_{\max}(H,\sigma).
\end{align*}
On the other hand, the smallest eigenvalue of a sum of Hermitian matrices is at least the sum of their smallest eigenvalues. Thus
\begin{align*}
    \lambda_{\min}(G,\tau)
    &\geq\lambda_{\min}(G-H,\tau)+\lambda_{\min}(H,\tau)\\
    &\geq\alpha\lambda_{\min}(G-H,\sigma)+\alpha\lambda_{\max}(H,\sigma)\\
    &=\alpha\bigl(\lambda_{\min}(G-H,\sigma)+\lambda_{\max}(H,\sigma)\bigr)\\
    &\geq\alpha\lambda_{\min}(G,\sigma).
\end{align*}
\end{proof}

\begin{lemma}\label{lem:unweighted-graph-gap}
Let $G$ be a connected unweighted graph on $m\geq2$ vertices. Then
\[
    \lambda_{\min}(G,[m-1,1])\geq\frac{2}{m^2}.
\]
\end{lemma}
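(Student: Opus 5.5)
The plan is to prove this with the standard path (canonical path) bound for the Fiedler value, keeping track of the constants. By the Courant--Fischer characterization recalled in the preliminaries, $\lambda_{\min}(G,[m-1,1])$ is the minimum of
\[
 \frac{\sum_{\{i,j\}\in E}(g(i)-g(j))^2}{\sum_i g(i)^2}
\]
over nonzero $g\perp\one$. For such $g$ we have the identity
\[
 \sum_i g(i)^2=\frac{1}{2m}\sum_{u,v}(g(u)-g(v))^2 ,
\]
where the sum runs over all ordered pairs. So it suffices to show, for every $g$,
\[
 \sum_{u,v}(g(u)-g(v))^2\le m^3\,\mathcal E_{G,\strep}(g).
\]
This gives the ratio at least $2m/m^3=2/m^2$.

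To prove the displayed inequality, we first reduce to a spanning tree $T$ of $G$. Since all weights are $0$ or $1$, deleting edges only decreases the Dirichlet energy, so it is enough to treat $T$. For each ordered pair $(u,v)$, let $P_{uv}$ be the unique path from $u$ to $v$ in $T$, of length $\ell_{uv}\le m-1$. By Cauchy--Schwarz,
\[
 (g(u)-g(v))^2\le \ell_{uv}\sum_{e\in P_{uv}}(\nabla_e g)^2 .
\]
Summing over pairs and exchanging the order of summation bounds the left side by
\[
 \sum_{e\in T}(\nabla_e g)^2\sum_{(u,v):\,e\in P_{uv}}\ell_{uv}.
\]

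The main step is the congestion estimate. An edge $e$ of $T$ separates the tree into components of sizes $a$ and $m-a$. The number of ordered pairs whose path uses $e$ is therefore $2a(m-a)\le m^2/2$. Each such path has length at most $m-1$, so the congestion of $e$ is at most $(m-1)m^2/2\le m^3$. This proves the inequality, with room to spare: it actually yields $4/(m^2(m-1))\cdot m/\ldots$ style bounds.

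The one place that needs care is that the naive bound $\ell\cdot(\#\text{pairs})$ is cubic in $m$, and so only gives order $1/m^2$ after dividing by $2m$. One should check that the constants really land at $2/m^2$. With congestion $\le (m-1)m^2/2$ the ratio is at least
\[
 \frac{2m}{(m-1)m^2/2}=\frac{4}{m(m-1)}\ge\frac{2}{m^2},
\]
which is fine. If instead I mistakenly used unordered pairs, I would recheck the factor $\frac{1}{2m}$ against $\frac1m$. For $m=2$ the bound is checked directly: the single edge gives eigenvalue $2\ge 1/2$.
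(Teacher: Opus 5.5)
Your argument is correct and is essentially the paper's: both apply Cauchy--Schwarz along a path of length at most $m-1$ between each pair of vertices and combine this with the identity $\sum_{i<j}(x_i-x_j)^2=m\norm{x}^2$ for $x\perp\one$. The only difference is that the paper bounds each path's energy by the full energy $\mathcal E(v)$, which gives $2/(m-1)^2$, whereas your spanning-tree congestion count $2a(m-a)\le m^2/2$ gives the slightly sharper $4/(m(m-1))$; the garbled aside about ``$4/(m^2(m-1))\cdot m/\ldots$ style bounds'' should simply be deleted, since the computation that follows it is the correct one.
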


\begin{proof}
Let $v=(x_1,\ldots,x_m)$ satisfy $\sum_i x_i=0$ and $\sum_i x_i^2=1$, and put
\[
    \mathcal E(v)=\sum_{\{u,w\}\in E(G)}(x_u-x_w)^2.
\]
This is the Rayleigh quotient of the ordinary graph Laplacian at $v$. Its minimum over such vectors is $\lambda_{\min}(G,[m-1,1])$.

For every pair $i<j$, choose a simple path $i=v_0,v_1,\ldots,v_\ell=j$ of length $\ell\leq m-1$. By the Cauchy--Schwarz inequality,
\begin{align*}
    (x_i-x_j)^2
    &=\left(\sum_{h=1}^{\ell}(x_{v_{h-1}}-x_{v_h})\right)^2\\
    &\leq\ell\sum_{h=1}^{\ell}(x_{v_{h-1}}-x_{v_h})^2\\
    &\leq(m-1)\mathcal E(v).
\end{align*}
The last inequality holds because the path uses distinct edges of $G$. Also,
\begin{align*}
    \sum_{i<j}(x_i-x_j)^2
    &=\frac12\sum_{i,j}(x_i-x_j)^2\\
    &=m\sum_i x_i^2-\left(\sum_i x_i\right)^2=m.
\end{align*}
Summing the path estimate over all pairs therefore gives
\[
    m\leq\binom m2(m-1)\mathcal E(v).
\]
Hence $\mathcal E(v)\geq2/(m-1)^2\geq2/m^2$. Taking the minimum over $v$ proves the assertion.
\end{proof}

\begin{lemma}\label{lem:unweighted-specht-gap}
Let $G$ be an unweighted graph on $[n]$. If $\lambda_{\min}(G,\tau)>0$, then
\[
    \lambda_{\min}(G,\tau)\geq\frac{2}{n^2}.
\]
\end{lemma}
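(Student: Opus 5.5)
We need: for an unweighted graph $G$ on $[n]$ with $\lambda_{\min}(G,\tau)>0$, the bound $\lambda_{\min}(G,\tau)\ge 2/n^2$.

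The natural route is to compare $L(G,\tau)$ with the Laplacian of the graph $\bar G$ obtained by replacing each connected component of $G$ by a complete graph on the same vertex set. Let the components be $B_1,\dots,B_r$, with sizes $\mu_1\ge\dots\ge\mu_r$, and write $\mu$ for this partition. By Lemma \ref{lem:zero-eigenvalue}, the hypothesis $\lambda_{\min}(G,\tau)>0$ says exactly that $\tau\not\trianglerighteq\mu$. The same lemma applies to $\bar G$, which has the same components, so $\lambda_{\min}(\bar G,\tau)>0$ as well.

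The first step is the path comparison from the proof of Lemma \ref{lem:unweighted-graph-gap}, lifted to an arbitrary orthogonal representation. For $i,j$ in the same component $B$ of size $m$, choose a simple path $i=v_0,\dots,v_\ell=j$ in $G$. Then $(ij)$ is a product of $2\ell-1$ adjacent transpositions $(v_{h-1}v_h)$ along the path: conjugate $(v_{\ell-1}v_\ell)$ successively by the earlier edges. For any unit $f$ and permutations $s_1,\dots,s_p$, the triangle inequality and Cauchy--Schwarz give
\[
\norm{f-\rho(s_1\cdots s_p)f}^2\le p\sum_q \norm{f-\rho(s_q)f}^2 .
\]
Since $\norm{f-\rho(s)f}^2=2\langle f,(I-\rho(s))f\rangle$ for an involution $s$, this yields
\[
\langle f,(I-\rho((ij)))f\rangle\le (2\ell-1)\sum_{\text{path edges } e}\,(\text{multiplicity of } e)\,\langle f,(I-\rho(e))f\rangle .
\]
In the word $s_1\cdots s_p$, each path edge appears at most twice. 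Summing over pairs $i<j$ in $B$ then gives roughly
\[
L(\bar G_B,\rho)\le c\,m^3\,L(G_B,\rho).
\]
This is weaker than needed, since it only gives a bound of order $1/n^3$ times $\lambda_{\min}(\bar G,\tau)$.

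To reach $2/n^2$ I would instead use a better decomposition. Rather than writing transpositions as words, compare energies through a spanning tree and use a canonical-path/congestion argument. For a tree on $m$ vertices, the sum over pairs of $\ell(i,j)\cdot\#\{\text{uses of }e\}$ is controlled by $\sum_e a_e b_e\,\ell$ with $a_e+b_e=m$. This gives a comparison constant of order $m^2\cdot m/4$, so it still loses a factor.

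Given this, the second step should avoid comparing all pairs. I would use $\lambda_{\min}(\bar G,\tau)$ explicitly: $\bar G$ is a disjoint union of complete graphs, whose Laplacian acts on each $V_\tau$ restricted to $\mathfrak S_{B_1}\times\dots\times\mathfrak S_{B_r}$ via content sums. Its eigenvalues are nonnegative integers, so $\lambda_{\min}(\bar G,\tau)\ge1$ when it is positive. This suggests aiming directly for
\[
L(G,\tau)\ge \frac{2}{n^2}\,L(\bar G_{\text{scaled}},\tau),
\]
with $\bar G_{\text{scaled}}$ the complete graphs with weights $1/m_B$. Its Laplacian on each block is $I-\frac1{m}\sum_{i<j}\rho((ij))$ up to scaling, with eigenvalues that are multiples of $1/m$. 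The target inequality is then a congestion bound of order $m^2/2$ against the weight-$1/m$ complete graph, which matches the $2/(m-1)^2$ of Lemma \ref{lem:unweighted-graph-gap}.

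The main obstacle is obtaining the operator comparison with constant $n^2/2$, not $n^3$, in a general representation. In Lemma \ref{lem:unweighted-graph-gap} the orthogonality $\sum x_i=0$ gave the exact identity $\sum_{i<j}(x_i-x_j)^2=m$, and that has no direct analogue here. I would first try to prove $\lambda_{\min}\bigl(\sum_{i<j\in B}(I-\rho((ij)))\bigr)\ge$ (integer spectral gap) combined with a tree-congestion bound. If that fails, I would fall back on the weaker $1/n^3$-type bound.
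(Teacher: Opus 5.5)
\textbf{There is a genuine gap.} Your proposal does not prove the lemma. It ends with the key operator comparison unresolved and a fallback to an $n^{-3}$-type bound, which is not the statement.

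\textbf{The paper's route.} The paper's proof needs no operator comparison in a general representation. It restricts $V_\tau$ to $H=\mathfrak S_{B_1}\times\cdots\times\mathfrak S_{B_r}$ and decomposes it into irreducible pieces $W_1\otimes\cdots\otimes W_r$. Every edge of $G$ lies inside one component, so $L(G,\tau)$ acts on such a piece as $\sum_i I\otimes\cdots\otimes L_i\otimes\cdots\otimes I$, where $L_i$ is the Laplacian of the component graph $G_i$ on $W_i$. The least eigenvalue there is therefore $\sum_i\lambda_{\min}(L_i)$, a sum of nonnegative terms. Positivity of $\lambda_{\min}(G,\tau)$ excludes $H$-fixed vectors, so on each piece some factor $W_i$ is a nontrivial irrep of $\mathfrak S_{B_i}$, with $m_i=|B_i|\geq2$. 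The Caputo--Liggett--Richthammer theorem applied to $G_i$ gives $\lambda_{\min}(L_i)\geq\lambda_{\min}(G_i,[m_i-1,1])$. Lemma \ref{lem:unweighted-graph-gap} then bounds this below by $2/m_i^2\geq2/n^2$.

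\textbf{Your own route can be completed.} Your scaled target, comparing $G$ with the complete graphs on the components carrying weights $1/m_B$, is true, and your congestion argument proves it. You dismissed it because of two miscounts.
\begin{enumerate}
\item $K_m$ acts on every nontrivial irrep $\nu\vdash m$ by a scalar at least $m$, not merely at least $1$. The formula in the proof of Lemma \ref{lem:complete-first-row} gives at least $k(m-k+1)\geq m$ with $k=m-\nu_1$. So the scaled complete graphs have least eigenvalue at least $1$ on every $H$-piece with a nontrivial factor.
\item What you need is $L(K_B,\rho)\leq\frac{m^3}{2}L(G_B,\rho)$ with $m=|B|$. The crude estimate $\ell_{uv}\leq m-1$ in your word-length bound only gives a constant of about $m^3$, because of the word length $2\ell_{uv}-1$ and the edge multiplicity $2$. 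The sharper count works. Suppose an edge $e$ of a spanning tree of $G_B$ splits it into parts of sizes $a$ and $b$. The distances from the endpoint of $e$ in the part of size $a$ to the vertices of that part sum to at most $a(a-1)/2$, and similarly for the other part. Hence $\sum_{\{u,v\}\ni e}\ell_{uv}\leq abm/2$, and the congestion at $e$ is at most $2\sum_{\{u,v\}\ni e}(2\ell_{uv}-1)\leq2ab(m-1)\leq m^2(m-1)/2$.
\end{enumerate}
This gives $\lambda_{\min}\geq2/(m(m-1))\geq2/n^2$ on the nontrivial factor. That route avoids the Caputo--Liggett--Richthammer theorem, at the price of a slightly weaker constant than the $2/(m-1)^2$ obtained in the proof of Lemma \ref{lem:unweighted-graph-gap}. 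As written, however, your proposal carries out neither argument, so the bound is not established.
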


\begin{proof}
Let $B_1,\ldots,B_r$ be the connected components of $G$, including isolated vertices, and write $G_i$ for the graph induced on $B_i$. Restrict $V_\tau$ to
\[
    H=\mathfrak S_{B_1}\times\cdots\times\mathfrak S_{B_r}.
\]
This restriction decomposes orthogonally into irreducible tensor products
$W_1\otimes\cdots\otimes W_r$, where $W_i$ is an irreducible representation of $\mathfrak S_{B_i}$. On one such tensor product $W=W_1\otimes\cdots\otimes W_r$, let $L_i$ denote the Laplacian of $G_i$ in the representation $W_i$. Every edge of $G$ lies in a single component, so
\[
    L(G,\tau)|_W
    =\sum_{i=1}^r I_{W_1}\otimes\cdots\otimes I_{W_{i-1}}
      \otimes L_i\otimes I_{W_{i+1}}\otimes\cdots\otimes I_{W_r}.
\]
In particular, if $L_i v_i=\theta_i v_i$ for each $i$, then
\[
    L(G,\tau)(v_1\otimes\cdots\otimes v_r)
    =(\theta_1+\cdots+\theta_r)(v_1\otimes\cdots\otimes v_r).
\]
Taking an eigenbasis in each factor yields an eigenbasis of $W$ consisting of these tensor products. Thus the smallest eigenvalue on $W$ is the sum of the smallest eigenvalues of the $L_i$.

Since $\lambda_{\min}(G,\tau)>0$, there is no $H$-fixed vector. Thus, on each tensor summand, at least one factor $W_i$ is nontrivial. Its component has size $m_i=|B_i|\geq2$. By the Caputo--Liggett--Richthammer theorem \cite{CLR} and Lemma \ref{lem:unweighted-graph-gap}, the smallest eigenvalue contributed by this factor is at least $2/m_i^2\geq2/n^2$. All the other factors contribute nonnegative eigenvalues. Since $W$ was arbitrary, the same lower bound holds on every irreducible $H$-summand of $V_\tau$, and hence for $\lambda_{\min}(G,\tau)$.
\end{proof}

\begin{lemma}\label{lem:forest-reduction}
Let $F$ be an unweighted forest on $[n]$ such that $\lambda_{\min}(F,\tau)>0$. Then $F$ is an $n^{-3}$-reducing graph for $(\tau,\sigma)$ for every partition $\sigma\vdash n$.
\end{lemma}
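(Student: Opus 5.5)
We need to show $\lambda_{\min}(F,\tau)\geq n^{-3}\lambda_{\max}(F,\sigma)$. The plan is to bound the two sides separately.

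\emph{Lower bound.} Since $F$ is an unweighted graph with $\lambda_{\min}(F,\tau)>0$, Lemma \ref{lem:unweighted-specht-gap} gives directly
\[
\lambda_{\min}(F,\tau)\geq\frac{2}{n^2}.
\]

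\emph{Upper bound.} We bound $\lambda_{\max}(F,\sigma)$ crudely by the operator norm of the Laplacian. Each $\rho_\sigma((ij))$ is an orthogonal involution, so its eigenvalues are $\pm1$. Hence $I-\rho_\sigma((ij))$ has eigenvalues in $\{0,2\}$ and operator norm at most $2$. A forest on $n$ vertices has at most $n-1$ edges, so the triangle inequality gives
\[
\lambda_{\max}(F,\sigma)\leq\sum_{\{i,j\}\in E(F)}\norm{I-\rho_\sigma((ij))}\leq 2(n-1).
\]

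\emph{Combining.} Putting the two bounds together,
\[
n^{-3}\lambda_{\max}(F,\sigma)\leq\frac{2(n-1)}{n^3}<\frac{2}{n^2}\leq\lambda_{\min}(F,\tau),
\]
which is exactly the defining inequality of an $n^{-3}$-reducing graph in Definition \ref{def:alpha-reducing}. It holds for every $\sigma$, since the upper bound did not depend on $\sigma$.

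There is no real obstacle here. The only point to check is that the hypothesis $\lambda_{\min}(F,\tau)>0$ is exactly what Lemma \ref{lem:unweighted-specht-gap} requires. The forest assumption is used only to count edges; any graph with $O(n)$ edges would work the same way.
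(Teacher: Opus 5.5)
Your proof is correct and is essentially the paper's argument: Lemma \ref{lem:unweighted-specht-gap} gives $\lambda_{\min}(F,\tau)\geq 2/n^2$, and the operator-norm bound $\lambda_{\max}(F,\sigma)\leq 2(n-1)$ comes from the forest having at most $n-1$ edges. One small caveat about your closing aside: a graph with more than $n$ edges would only give a reducing constant of order $n^{-3}$, not exactly $n^{-3}$, although this does not affect the proof.
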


\begin{proof}
Each $I-\rho_\sigma((ij))$ has operator norm at most $2$, and a forest has at most $n-1$ edges. Hence
\[
    \lambda_{\max}(F,\sigma)\leq2(n-1).
\]
By Lemma \ref{lem:unweighted-specht-gap},
\[
    \lambda_{\min}(F,\tau)\geq\frac{2}{n^2}
    \geq\frac{1}{n^3}\lambda_{\max}(F,\sigma).
\]
\end{proof}

We now recall and prove:

\begin{customthm}{\ref{thm:graph-polynomial-bound}}
Let $\tau,\sigma$ be nontrivial partitions of $n$ such that $\sigma\trianglerighteq\tau$. Then
\[
    C(\tau,\sigma)\geq\frac{1}{n^3}.
\]
\end{customthm}

\begin{proof}
We will prove that
\[
    \lambda_{\min}(G,\tau)\geq\frac{1}{n^3}\lambda_{\min}(G,\sigma)
\]
for every weighted graph $G$, by induction on the number of positive-weight edges. When there are no such edges, both sides are zero.

If $\lambda_{\min}(G,\tau)=0$, let $\pi$ be the partition of connected component sizes of $G$. By Lemma \ref{lem:zero-eigenvalue}, $\tau\trianglerighteq\pi$. Since $\sigma\trianglerighteq\tau$, we also have $\sigma\trianglerighteq\pi$, so $\lambda_{\min}(G,\sigma)=0$ (again by Lemma \ref{lem:zero-eigenvalue}), and the desired inequality holds.

Otherwise, take a spanning tree in each non-singleton connected component of the positive-weight support of $G$, and leave isolated vertices isolated. These trees form an unweighted forest $F$ on $[n]$ with the same connected components as $G$. By Lemma \ref{lem:zero-eigenvalue}, $\lambda_{\min}(F,\tau)>0$. Set
\[
    \varepsilon=\min_{\{i,j\}\in E(F)}w_{ij}>0.
\]
Then $G'=G-\varepsilon F$ has nonnegative weights and strictly fewer positive-weight edges than $G$. By Lemma \ref{lem:forest-reduction}, $F$, and hence $\varepsilon F$, is $n^{-3}$-reducing for $(\tau,\sigma)$. Hence, by Lemma \ref{lem:alpha-reduction}, it is enough to prove the desired comparison for $G'$. This holds by the induction hypothesis.
\end{proof}

We conclude:

\begin{customthm}{\ref{thm:comparability}}
    Let $\tau,\sigma$ be partitions of $n$. The following are equivalent:
    \begin{enumerate}
        \item There exists $C>0$ such that $\lambda_{\min}(G,\tau)\geq C \lambda_{\min}(G,\sigma)$ for any weighted graph $G$.
        \item $\sigma \trianglerighteq \tau$, where $\trianglerighteq$ is the standard dominance relation among partitions, i.e. $\sum_{i\leq r}\sigma_i \geq \sum_{i \leq r}\tau_i$ for all $r$.
    \end{enumerate}
\end{customthm}

\begin{proof}
The implication $(1)\Rightarrow(2)$ is Lemma \ref{lem:dominance-necessary}. Conversely, suppose that $\sigma\trianglerighteq\tau$. If $\sigma=[n]$, then $\lambda_{\min}(G,\sigma)=0$ for every $G$, so the required comparison is immediate. Otherwise both partitions are nontrivial, and Theorem \ref{thm:graph-polynomial-bound} gives the comparison with $C=n^{-3}$.
\end{proof}

\section{The \texorpdfstring{$2/3$}{2/3} conjecture}
By Theorem \ref{thm:graph-polynomial-bound} we have $C(\tau,\sigma)\geq1/n^3$ for all $[n]\neq\sigma\trianglerighteq\tau$. A natural question is, how much can this bound be improved?

By our simulations, and some particular cases detailed below, it seems that there is a universal bound, independent of $n$.

\begin{comparisonconjecture}
For all $[n]\neq\sigma\trianglerighteq\tau$, we have
\[
    C(\tau,\sigma)\geq\frac{2}{3}.
\]
\end{comparisonconjecture}

We will show that Conjecture \ref{conj:two-thirds} holds for quasi-complete graphs. Let us recall their definition from \cite{AK}:

\begin{defi}\label{def:quasi-complete}
A weighted graph $G$ on $[n]$ is \emph{quasi-complete} if there are nonnegative numbers $a_2,\ldots,a_n$ such that
\[
    w_{ij}=a_j\qquad\text{for all }1\leq i<j\leq n.
\]
\end{defi}

Note that in particular, the complete graph (which is also called the mean-field case) is quasi-complete.

\begin{lemma}\label{lem:quasi-complete-spectrum}
Let $G$ be quasi-complete, with weights $a_2,\ldots,a_n$, and let $\nu\vdash n$. For a standard Young tableau $T$ of shape $\nu$, write $c_T(k)$ for the content of the box containing $k$, where the content of a box in row $i$ and column $j$ is $j-i$. Then
\[
    \lambda_{\min}(G,\nu)
    =\min_T\sum_{k=2}^n a_k\bigl(k-1-c_T(k)\bigr),
\]
where the minimum is over all standard Young tableaux of shape $\nu$.
\end{lemma}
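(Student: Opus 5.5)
The plan is to express the quasi-complete Laplacian through Jucys--Murphy elements and then use their diagonal action on the Gelfand--Tsetlin (Young seminormal/orthogonal) basis. For $k\ge2$ let
\[
 X_k=\sum_{i<k}(ik)\in\R[\mathfrak S_n]
\]
be the $k$-th Jucys--Murphy element. Since $w_{ik}=a_k$ for all $i<k$, we can group the terms of \eqref{eq:laplacian_def} by the larger endpoint:
\[
 L(G,\nu)=\sum_{k=2}^n a_k\sum_{i<k}\bigl(I-\rho_\nu((ik))\bigr)
 =\sum_{k=2}^n a_k\bigl((k-1)I-\rho_\nu(X_k)\bigr).
\]
So the whole statement reduces to knowing the joint spectrum of the commuting family $\rho_\nu(X_2),\ldots,\rho_\nu(X_n)$.

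The key input is the classical theorem of Jucys, Murphy and Okounkov--Vershik. The Gelfand--Tsetlin basis $\{v_T\}$ of $V_\nu$ is indexed by the standard Young tableaux $T$ of shape $\nu$. It is obtained by restricting along the chain $\mathfrak S_1\subset\mathfrak S_2\subset\cdots\subset\mathfrak S_n$, and each branching is multiplicity free. In this basis every $X_k$ acts diagonally, with
\[
 \rho_\nu(X_k)v_T=c_T(k)v_T.
\]
I would cite this directly, for instance from Okounkov--Vershik or \cite{Sagan}. Alternatively one can give a short justification: $X_k$ equals the difference between the sum of all transpositions in $\mathfrak S_k$ and the sum of all transpositions in $\mathfrak S_{k-1}$. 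Each of these sums is central in its group algebra. On the irreducible $\mathfrak S_j$-component of shape $\mu$, the sum of all transpositions in $\mathfrak S_j$ acts by the scalar $\sum_{b\in\mu}c(b)$, the content sum. Taking the difference of these scalars for the shapes of $T$ restricted to $\{1,\ldots,k\}$ and to $\{1,\ldots,k-1\}$ leaves exactly the content of the box containing $k$.

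With this in hand the proof finishes quickly. Substituting into the expression above shows that $L(G,\nu)$ is diagonal in the basis $\{v_T\}$, with eigenvalues
\[
 L(G,\nu)v_T=\Bigl(\sum_{k=2}^n a_k\bigl(k-1-c_T(k)\bigr)\Bigr)v_T.
\]
This basis is orthogonal with respect to the invariant inner product, because the vectors $v_T$ are joint eigenvectors of the self-adjoint operators $\rho_\nu(X_k)$ with pairwise distinct joint eigenvalue tuples; distinct tableaux give distinct content sequences. Hence the spectrum of the self-adjoint operator $L(G,\nu)$ is exactly this list of numbers, and its minimum is the stated minimum over $T$.

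The only substantive step is the content formula for the Jucys--Murphy eigenvalues. Since it is standard, I would quote it rather than reprove it. If a self-contained argument is wanted, the main care needed is the central-character formula: the class sum of transpositions acts on $V_\mu$ by $\sum_{b\in\mu}c(b)$. This follows from the Frobenius character formula or from a direct computation on Young's seminormal form.
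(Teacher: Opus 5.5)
Your proof is correct and follows essentially the same route as the paper. The paper writes $G=\sum_k a_k S_k$, where $S_k$ is the star joining $k$ to $1,\dots,k-1$; your grouping $L(S_k,\nu)=(k-1)I-\rho_\nu(X_k)$ is that same decomposition, and the eigenvalue $k-1-c_T(k)$ on $v_T$ is quoted there from \cite[Lemma 3]{AK} instead of from the Jucys--Murphy content theorem. Your central-character sketch makes that cited step self-contained, and your orthogonality remark is harmless but not needed, since any eigenbasis already determines the spectrum.
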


\begin{proof}
For $2\leq k\leq n$, let $S_k$ be the unweighted star on $[n]$ with edges $\{i,k\}$ for $i<k$, leaving the other vertices isolated. Then $G=\sum_{k=2}^n a_k S_k$, and
\[
    L(G,\nu)=\sum_{k=2}^n a_k L(S_k,\nu).
\]
By \cite[Lemma 3]{AK}, the Gelfand--Tsetlin basis of $V_\nu$ is a common eigenbasis for all the star Laplacians $L(S_k,\nu)$. Its vectors are indexed by chains of Young diagrams obtained by successively removing one corner. Equivalently, they are indexed by standard Young tableaux: the box removed when the current diagram has $k$ boxes is labelled $k$.

If $v_T$ is the basis vector indexed by $T$, the same lemma gives
\[
    L(S_k,\nu)v_T=\bigl(k-1-c_T(k)\bigr)v_T.
\]
Consequently
\[
    L(G,\nu)v_T
    =\left(\sum_{k=2}^n a_k\bigl(k-1-c_T(k)\bigr)\right)v_T.
\]
As $T$ ranges over all standard tableaux of shape $\nu$, these are all the eigenvalues. Taking the minimum proves the assertion.
\end{proof}

\begin{lemma}\label{lem:corner-removal}
Let $\alpha,\beta\vdash k$, where $k\geq2$, and suppose that $\alpha\trianglerighteq\beta$. For every removable box $b$ of $\beta$, there is a removable box $a$ of $\alpha$ such that
\[
    \alpha\setminus a\trianglerighteq\beta\setminus b
\]
and
\[
    k-1-\operatorname{cont}(a)
    \leq\frac32\bigl(k-1-\operatorname{cont}(b)\bigr).
\]
\end{lemma}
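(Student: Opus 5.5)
The plan is a direct combinatorial argument on Young diagrams. Write $A_j=\sum_{i\le j}\alpha_i$ and $B_j=\sum_{i\le j}\beta_i$, so $A_j\ge B_j$ for all $j$. For a removable box in row $r$ and column $c$ of a diagram with $k$ boxes, set $f=k-1-\operatorname{cont}=k-1-c+r$. Suppose $b$ lies in row $r$ of $\beta$, so $b=(r,\beta_r)$. Removing a box from row $s$ of $\alpha$ lowers $A_j$ by one exactly for $j\ge s$. Removing $b$ lowers $B_j$ by one exactly for $j\ge r$. Hence $\alpha\setminus a\trianglerighteq\beta\setminus b$ holds if and only if either
\begin{itemize}
\item $s\ge r$, in which case the condition is automatic, or
\item $s<r$ and $A_j>B_j$ for all $s\le j<r$.
\end{itemize}
This reduces the dominance part of the lemma to a clean criterion. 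The rest of the proof consists in choosing $s$ to make $f(a)$ small.

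First I would handle the case $\ell(\alpha)\ge r$. I would take $a$ to be the removable box of $\alpha$ in the first corner row $s\ge r$, i.e. the least $s\ge r$ with $\alpha_s>\alpha_{s+1}$. Then $\alpha_s=\alpha_r$, and so
\[
f(a)=k-1-\alpha_r+s .
\]
I would compare this with $f(b)=k-1-\beta_r+r$. If $\alpha_r\ge\beta_r$ and $s$ is close to $r$, we already get $f(a)\le f(b)$. The bad situation is a long block of equal rows $\alpha_r=\cdots=\alpha_s$. In that situation I would instead move up: I would look for a corner of $\alpha$ in a row $s'<r$ such that $A_j>B_j$ on $[s',r)$. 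Such a corner has larger content.

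In the complementary case $\ell(\alpha)<r$ I am forced to use $s<r$. Let $t<r$ be the largest index with $A_t=B_t$, taking $t=0$ if there is none. Then every corner of $\alpha$ in a row $s\in(t,r)$ is admissible. Such a corner exists, because $A_{r-1}>B_{r-1}$ forces $\alpha$ to have a corner somewhere in $(t,\ell(\alpha)]$; here I use $A_{\ell(\alpha)}=k>B_{\ell(\alpha)}$. I would take the admissible corner with the largest content, which is the highest one.

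The main obstacle is the quantitative bound $f(a)\le\frac32 f(b)$, and I expect to need a counting argument here. The idea is to bound $f(a)$ by the number of boxes of $\alpha$ outside the hook through $a$ plus its distance to the corner, and to bound $f(b)$ from below by something like
\[
f(b)\ \ge\ (k-\beta_r-B_{r-1}) + (B_{r-1}-(r-1)) + 2(r-1),
\]
which counts boxes below row $r$ and boxes in earlier rows. Equality $A_t=B_t$ at $t$, together with $\alpha\trianglerighteq\beta$ on the rows $t<j<r$, should then let me show that the boxes of $\alpha$ which make $f(a)$ large are matched by at least two thirds as many boxes counted in $f(b)$.

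I would first test the claimed constant on small extremal cases to locate the configuration where $3/2$ is attained, for instance $\alpha$ a near-rectangle and $\beta$ obtained by moving one box down. I would then try to prove the inequality in two steps. The first step is to reduce to the case $t=0$, by removing the common top part $A_t=B_t$, which I expect to change both sides compatibly. The second step is to induct on $k$ or on the dominance order, moving single boxes of $\beta$ upward toward $\alpha$.
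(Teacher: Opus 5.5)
\textbf{There is a genuine gap.} Your dominance criterion is correct. In the case $\ell(\alpha)<r$, your choice (the highest corner below the last tie $t$) is the right one. But the $\frac32$ bound, which is the whole content of the lemma, is never proved. In the case $\ell(\alpha)\ge r$ you also never fix a choice of $a$.

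Your default there, the first corner of $\alpha$ in a row $s\ge r$, fails for a reason other than the one you anticipate: dominance does not give $\alpha_r\ge\beta_r$. Take $\alpha=[2m-1,1]$, $\beta=[m,m]$ and $b=(2,m)$. Then $f(b)=m+1$, but your rule picks the corner $(2,1)$ with $f=2m$. The ratio exceeds $\frac32$ for $m\ge4$, and there is no long block of equal rows to trigger your ``move up'' fallback.

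Two other parts of the plan do not carry weight either. Your displayed lower bound for $f(b)$ is an identity: the three terms sum to exactly $k-1-\beta_r+r$, so it carries no information. The proposed induction on the dominance order is not yet an argument, because the chosen corner changes as $\beta$ moves.

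\textbf{The missing idea.} One uniform choice of corner makes the column comparison automatic and removes the need for a case split on $\ell(\alpha)$. With $t<r$ the last index where $A_t=B_t$, put $u=\alpha_{t+1}$. Let $a=(s,u)$ be the bottom box of the block of rows of $\alpha$ of length $u$ starting at row $t+1$. The argument then runs as follows.
\begin{enumerate}
\item Since $A_t=B_t$ and $A_{t+1}\ge B_{t+1}$, we get $u\ge\beta_{t+1}\ge\beta_r$.
\item Admissibility follows from your own criterion, because $s>t$ and $t$ is the last tie before $r$.
\item Now $f(a)-f(b)=(s-r)-(u-\beta_r)\le s-r$, so only downward displacement costs anything, and $s\le r$ gives $f(a)\le f(b)$ directly.
\item If $s>r$, then $\beta_r\ge2$. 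Otherwise $b$ ends the last row of $\beta$, so $B_r=k$, and dominance forces $\ell(\alpha)\le r<s$, a contradiction.
\item Rows $1,\dots,s$ of $\alpha$ all have length at least $u\ge\beta_r$. Hence $k\ge s\beta_r$, and so $f(b)\ge\beta_r(s-r)+(\beta_r+1)(r-1)\ge\beta_r(s-r)$.
\item Therefore $f(a)\le f(b)+(s-r)\le(1+1/\beta_r)f(b)\le\frac32 f(b)$.
\end{enumerate}
This is exactly the paper's argument, and it covers both of your cases at once.
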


\begin{proof}
For a box $x=(i,j)$ in a diagram of size $k$, write
\[
    d(x)=k-1-\operatorname{cont}(x)=k-j+i-1.
\]
Let $b=(s,t)$, so $t=\beta_s$. We will choose a removable corner $a=(r,u)$ of $\alpha$ with $u\geq t$, while preserving dominance after removal. Moving to the right decreases $d$; only a downward displacement can increase it.

First consider the dominance condition. Removing a box from row $r$ decreases every row-prefix sum starting at $r$ by one. If $r\geq s$, removing $a$ and $b$ therefore preserves dominance automatically. If $r<s$, we need a surplus of at least one in the prefix sums of $\alpha$ at every index from $r$ through $s-1$. Thus any equality of prefix sums above row $s$ forces us to choose $a$ below that equality.

Let $q<s$ be the largest index for which
\[
    \sum_{i=1}^q\alpha_i=\sum_{i=1}^q\beta_i,
\]
allowing $q=0$ with both sums empty. Dominance at the next index gives
\[
    u:=\alpha_{q+1}\geq\beta_{q+1}\geq\beta_s=t.
\]
Starting at row $q+1$, follow the consecutive rows of $\alpha$ of length $u$ down to their last row, say row $r$, and choose $a=(r,u)$. This is a removable corner and $r>q$. If $r<s$, every prefix inequality from $r$ through $s-1$ is strict, by the choice of $q$. Its difference is an integer, hence at least one, as required. If $r\geq s$, removal is already safe. This proves
\[
    \alpha\setminus a\trianglerighteq\beta\setminus b.
\]
The choice of corner is illustrated in Figure \ref{fig:corner-removal}.

\begin{figure}[H]
\centering
\begin{tikzpicture}[x=4mm,y=-4mm,font=\small]
    \fill[black!20] (4,4) rectangle (5,5);
    \fill[black!20] (12,3) rectangle (13,4);

    \foreach \row/\len in {1/7,2/5,3/5,4/5,5/5}{
        \foreach \col in {1,...,\len}{
            \draw (\col-1,\row-1) rectangle (\col,\row);
        }
    }
    \foreach \row/\len in {1/6,2/6,3/4,4/3,5/2,6/2,7/2,8/2}{
        \foreach \col in {1,...,\len}{
            \draw (10+\col-1,\row-1) rectangle (10+\col,\row);
        }
    }

    \node at (3.5,-1) {$\alpha$};
    \node at (13,-1) {$\beta$};
    \node at (4.5,4.5) {$a$};
    \node at (12.5,3.5) {$b$};

    \draw[dashed] (7.2,2) -- (9.8,2);
    \node[font=\scriptsize] at (8.5,1.15) {$q$};

    \draw[dashed] (7.2,4) -- (9.8,4);
    \node[font=\scriptsize] at (8.5,3.15) {$s$};

    \draw[dashed] (-2.8,5) -- (-0.2,5);
    \node[font=\scriptsize] at (-1.5,4.15) {$r$};
\end{tikzpicture}
\caption{The dashed lines mark the last equality of prefix sums
above $b$ and the rows containing the chosen corners $a$ and $b$.
The chosen corners are shaded.}
\label{fig:corner-removal}
\end{figure}

It remains to compare the costs. Since $u\geq t$,
\[
    d(a)-d(b)=(r-s)-(u-t)\leq r-s.
\]
If $r\leq s$, this gives $d(a)\leq d(b)$. Suppose that $r>s$. Then $t\geq2$: otherwise $b$ would be the last box in the last row of $\beta$, so $\beta$ would have exactly $s$ rows. Dominance would force $\alpha$ to have at most $s$ rows as well, contradicting $r>s$.

Since $a=(r,u)$ and $u\geq t$, the diagram of $\alpha$ contains an $r\times t$ rectangle. Thus $k\geq rt$, and
\begin{align*}
    d(b)&=k-t+s-1\geq rt-t+s-1\\
    &=rt-st+st-t+s-1=t(r-s)+(t+1)(s-1)\\
    &\geq t(r-s).
\end{align*}
The downward displacement therefore increases the cost by at most $d(b)/t$. Since $t\geq2$,
\[
    d(a)\leq d(b)+(r-s)
    \leq\left(1+\frac1t\right)d(b)
    \leq\frac32d(b).
\]
\end{proof}

\begin{thm}\label{thm:quasi-complete-comparison}
Let $G$ be a quasi-complete weighted graph on $[n]$, and let $\tau,\sigma\vdash n$ with $\sigma\trianglerighteq\tau$. Then
\[
    \lambda_{\min}(G,\tau)\geq\frac23\lambda_{\min}(G,\sigma).
\]
\end{thm}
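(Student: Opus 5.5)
The plan is to combine Lemma \ref{lem:quasi-complete-spectrum} with an iterated application of Lemma \ref{lem:corner-removal}. By Lemma \ref{lem:quasi-complete-spectrum}, write $\lambda_{\min}(G,\tau)=\sum_{k=2}^n a_k\bigl(k-1-c_T(k)\bigr)$ for a minimizing standard tableau $T$ of shape $\tau$. It then suffices to construct a standard tableau $S$ of shape $\sigma$ such that
\[
    k-1-c_S(k)\leq\tfrac32\bigl(k-1-c_T(k)\bigr)\qquad\text{for every }2\leq k\leq n.
\]
Indeed, since all $a_k\geq0$, multiplying by $a_k$ and summing gives $\lambda_{\min}(G,\sigma)\leq\sum_k a_k(k-1-c_S(k))\leq\frac32\lambda_{\min}(G,\tau)$. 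This is the claim.

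To build $S$, view $T$ as a chain $\tau=\beta^{(n)}\supset\beta^{(n-1)}\supset\cdots\supset\beta^{(1)}$, where $\beta^{(k-1)}=\beta^{(k)}\setminus b_k$ and $b_k$ is the box of $T$ containing $k$. We construct in parallel a chain $\sigma=\alpha^{(n)}\supset\alpha^{(n-1)}\supset\cdots$ by downward induction on $k$, maintaining the invariant $\alpha^{(k)}\trianglerighteq\beta^{(k)}$, with both partitions of $k$. At $k=n$ this is the hypothesis $\sigma\trianglerighteq\tau$. Given the invariant at some $k\geq2$, Lemma \ref{lem:corner-removal} applied to $\alpha^{(k)},\beta^{(k)}$ and the removable box $b_k$ supplies a removable box $a_k$ of $\alpha^{(k)}$ with $\alpha^{(k)}\setminus a_k\trianglerighteq\beta^{(k)}\setminus b_k$ and the required cost inequality $k-1-\operatorname{cont}(a_k)\leq\frac32(k-1-\operatorname{cont}(b_k))$. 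Set $\alpha^{(k-1)}=\alpha^{(k)}\setminus a_k$. At $k=1$ both diagrams are the single box, and that box receives the label $1$, which does not enter the sum. Labelling $a_k$ by $k$ gives a standard Young tableau $S$ of shape $\sigma$, because successive corner removals always yield a standard tableau. Moreover $c_S(k)=\operatorname{cont}(a_k)$, as required.

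Essentially all the difficulty lies in Lemma \ref{lem:corner-removal}, which is already established. The only points to check in the assembly are the following. First, the content of a box does not depend on the ambient diagram, so the costs computed in the lemma coincide with the terms $k-1-c_S(k)$ and $k-1-c_T(k)$. Second, the nonnegativity of the weights $a_k$ is what allows the termwise inequality to be summed. Third, the case $\sigma=[n]$ needs no separate treatment: there $\lambda_{\min}(G,\sigma)=0$ and the inequality is trivial, and the argument above covers it anyway.
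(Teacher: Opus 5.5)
Your proposal is correct and matches the paper's proof essentially step for step: you take a minimizing tableau $T$ of shape $\tau$ from Lemma~\ref{lem:quasi-complete-spectrum}, remove its boxes $n,n-1,\ldots,2$ while using Lemma~\ref{lem:corner-removal} to remove matching corners from the $\sigma$ side (preserving dominance), label those corners to get a standard tableau of shape $\sigma$, and sum the termwise $\tfrac32$ cost bound against the nonnegative weights $a_k$. Your extra checks, namely that successive corner removals give a standard labelling and that the case $\sigma=[n]$ is trivial, are correct and just spell out what the paper leaves implicit.
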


\begin{proof}
Choose a standard tableau $T$ of shape $\tau$ attaining the minimum in Lemma \ref{lem:quasi-complete-spectrum}. Remove its boxes in the order $n,n-1,\ldots,2$. Starting from $\sigma\trianglerighteq\tau$, apply Lemma \ref{lem:corner-removal} at each step to remove a corresponding box from the current diagram on the $\sigma$ side, preserving dominance.

Label each removed box on the $\sigma$ side by the same number as the corresponding box of $T$, and label the last remaining box by $1$. The resulting filling $U$ is a standard tableau of shape $\sigma$. At every step,
\[
    k-1-c_U(k)\leq\frac32\bigl(k-1-c_T(k)\bigr).
\]
Since all $a_k$ are nonnegative, Lemma \ref{lem:quasi-complete-spectrum} gives
\[
    \lambda_{\min}(G,\sigma)
    \leq\sum_{k=2}^n a_k\bigl(k-1-c_U(k)\bigr)
    \leq\frac32\sum_{k=2}^n a_k\bigl(k-1-c_T(k)\bigr)
    =\frac32\lambda_{\min}(G,\tau).
\]
\end{proof}

Let us see that there could not be a better uniform bound than $2/3$.

\begin{thm}\label{thm:sharp-star-family}
For every $m\geq2$, put
\footnote{Note that within the partition notation $a^b$ means $b$ occurrences of $a$.}
\[
    \tau=[2,1^{2m-2}],\qquad \sigma=[2^m],
\]
and let $G$ be the unweighted star on $2m$ vertices. Then $\sigma\trianglerighteq\tau$ and
\[
    \frac{\lambda_{\min}(G,\tau)}{\lambda_{\min}(G,\sigma)}=\frac23.
\]
\end{thm}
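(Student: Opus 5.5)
The plan is to realize the star as a quasi-complete graph and read both minimal eigenvalues off Lemma \ref{lem:quasi-complete-spectrum}. Put $n=2m$ and take the center of the star to be vertex $n$. Then $w_{in}=1$ for all $i<n$ and $w_{ij}=0$ otherwise. This is quasi-complete with $a_n=1$ and $a_2=\cdots=a_{n-1}=0$, i.e.\ $G=S_n$ in the notation of that proof. Lemma \ref{lem:quasi-complete-spectrum} then gives, for any $\nu\vdash n$,
\[
\lambda_{\min}(G,\nu)=\min_T\bigl(n-1-c_T(n)\bigr)=n-1-\max_{x}\operatorname{cont}(x),
\]
where $x$ ranges over the removable corners of $\nu$. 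This holds because the box containing $n$ in a standard tableau is always a removable corner, and every removable corner occurs as that box for some standard tableau.

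Next I compute this for the two shapes.
\begin{itemize}
\item For $\tau=[2,1^{2m-2}]$, the removable corners are $(1,2)$, with content $1$, and $(2m-1,1)$, with content $2-2m$. Hence $\lambda_{\min}(G,\tau)=2m-1-1=2m-2$.
\item For $\sigma=[2^m]$, the only removable corner is $(m,2)$, with content $2-m$. Hence $\lambda_{\min}(G,\sigma)=2m-1-(2-m)=3m-3$.
\end{itemize}
Since $m\ge2$, we have $3m-3>0$, and the ratio is $(2m-2)/(3m-3)=2/3$.

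Finally, I check dominance. The prefix sums of $\sigma$ are $2,4,\dots,2m$ for $r\le m$, and then stay at $2m$. Those of $\tau$ are $r+1$ for $r\le 2m-1$. Since $2r\ge r+1$ for $r\ge1$ and $2m\ge r+1$ for $r\le 2m-1$, we get $\sigma\trianglerighteq\tau$.

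There is no real obstacle. The only point needing care is justifying that the minimum over tableaux reduces to a maximum of content over removable corners. This follows because only $a_n$ is nonzero, so each term in Lemma \ref{lem:quasi-complete-spectrum} depends only on where $n$ sits. Any corner can host $n$, since the remaining diagram can be filled by an arbitrary standard tableau.
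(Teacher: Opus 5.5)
Your proposal is correct and follows essentially the same route as the paper's proof. You label the star's center $n$ so that only $a_n=1$, and you reduce Lemma \ref{lem:quasi-complete-spectrum} to $n-1$ minus the largest content of a removable corner. You then compute $\lambda_{\min}(G,\tau)=2m-2$ and $\lambda_{\min}(G,\sigma)=3m-3$; your explicit prefix-sum check of $\sigma\trianglerighteq\tau$ just fills in the step the paper calls immediate.
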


\begin{proof}
The dominance relation is immediate. Label the center of the star by $n=2m$. This is a quasi-complete graph with $a_2=\cdots=a_{n-1}=0$ and $a_n=1$. In a standard tableau, the box containing $n$ is removable, and every removable box can contain $n$ in some standard tableau. Lemma \ref{lem:quasi-complete-spectrum} therefore gives
\[
    \lambda_{\min}(G,\nu)=n-1-\max\{\operatorname{cont}(x):x\text{ is a removable box of }\nu\}.
\]
For $\tau=[2,1^{2m-2}]$, the largest removable-box content is $1$, attained at $(1,2)$. For $\sigma=[2^m]$, the only removable box is $(m,2)$, of content $2-m$. Hence
\[
    \lambda_{\min}(G,\tau)=2m-2,\qquad \lambda_{\min}(G,\sigma)=3m-3,
\]
and their ratio is $2/3$.

\end{proof}

\section{Linear lower bounds on minimal eigenvalues}\label{sec:first-row}
We now switch gears and turn to bounds on the minimal eigenvalue
$\lambda_{\min}(G,\tau)$ in terms of
$\gamma:=\gamma(G)=\lambda_{\min}(G,[n-1,1])$. In other words, we seek
lower bounds on the constants $C(\tau,[n-1,1])$ for arbitrary nontrivial
partitions $\tau$. For this purpose, we
realize the relevant Specht modules on ordered tuples and compare their
energies with vertex energies. We then use heat-kernel weights to control
the error in that comparison.

\subsection{Ordered tuples and coordinate sums}
Let $0\le k\le n-1$. Define
\[
 \Om_k=\{(x_1,\ldots,x_k)\in[n]^k:x_a\ne x_b\text{ whenever }a\ne b\}.
\]
The set $\Om_0$ consists of the empty tuple. Put $M_k=\R^{\Om_k}$.
The action of $\mathfrak S_n$ on $\Om_k$ permutes all vertex labels; the corresponding
orthogonal representation on $M_k$ is
\[
 (\rho_k(\pi)f)(x_1,\ldots,x_k)
 =f(\pi^{-1}x_1,\ldots,\pi^{-1}x_k).
\]
For $1\le k\le n-1$ and $1\le r\le k$, define $D_r:M_k\to M_{k-1}$ by
\begin{equation}\label{eq:Dr}
 (D_rf)(b_1,\ldots,b_{k-1})
 =\sum_{i\notin\{b_1,\ldots,b_{k-1}\}}
 f(b_1,\ldots,b_{r-1},i,b_r,\ldots,b_{k-1}).
\end{equation}
Let
\[
 V_k=\bigcap_{r=1}^k\ker D_r.
\]
Changing the summation variable in \eqref{eq:Dr} from $i$ to
$\pi^{-1}i$ gives the intertwining relation
\[
 D_r\rho_k(\pi)=\rho_{k-1}(\pi)D_r.
\]
Thus each $\ker D_r$ is invariant under $\rho_k$, and so is $V_k$.
We regard $V_k$ as a representation of $\mathfrak S_n$ with this restricted action.

\begin{lemma}\label{lem:branch}
Let $1\le k \le n-1$. If $\tau\vdash n$ has $\tau_1=n-k$, then $V_k$ contains a copy of $V_\tau$.
\end{lemma}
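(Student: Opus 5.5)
The plan is to identify $M_k$ with an induced representation and to use the operators $D_r$ to cut away every constituent that does not have first row $n-k$. The stabilizer of the tuple $(1,\ldots,k)$ is $\mathfrak S_{n-k}$, acting on $\{k+1,\ldots,n\}$, and $\mathfrak S_n$ acts transitively on $\Om_k$. Hence
\[
M_k\cong\operatorname{Ind}_{\mathfrak S_{n-k}}^{\mathfrak S_n}\mathbf 1
\cong\operatorname{Ind}_{\mathfrak S_{n-k}\times\mathfrak S_1^k}^{\mathfrak S_n}\mathbf 1
= M^{[n-k,1^k]}.
\]
By Young's rule, the multiplicity of $V_\tau$ in $M_k$ is the Kostka number $K_{\tau,[n-k,1^k]}$. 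This number is positive when $\tau_1=n-k$, because such a $\tau$ dominates $[n-k,1^k]$. One semistandard tableau is explicit: put $n-k$ ones in the first row and the labels $2,\ldots,k+1$ in the remaining boxes. So $V_\tau$ occurs in $M_k$.

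Next I show that no copy of $V_\tau$ in $M_k$ can be sent nonzero by any $D_r$. Each $D_r$ is an $\mathfrak S_n$-intertwiner $M_k\to M_{k-1}$, by the relation proved just before the lemma. By the same reasoning as above, $M_{k-1}\cong M^{[n-k+1,1^{k-1}]}$. Every irreducible constituent $V_\nu$ of $M_{k-1}$ therefore satisfies $K_{\nu,[n-k+1,1^{k-1}]}>0$, so $\nu\trianglerighteq[n-k+1,1^{k-1}]$. In particular $\nu_1\ge n-k+1$. Since $\tau_1=n-k$, the module $V_\tau$ does not occur in $M_{k-1}$. By Schur's lemma, every intertwiner from $V_\tau$ to $M_{k-1}$ is zero.

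Now fix an irreducible submodule $W\subseteq M_k$ with $W\cong V_\tau$, which exists by the first step. The restriction $D_r|_W$ is an intertwiner $V_\tau\to M_{k-1}$, so it vanishes for every $r$. Hence $W\subseteq\bigcap_r\ker D_r=V_k$, which proves the lemma. In fact all $V_\tau$-isotypic components of $M_k$ lie in $V_k$.

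There is no real obstacle. The only point needing care is the identification of $M_k$ with the Young permutation module $M^{[n-k,1^k]}$, together with the convention $k\le n-1$ used in applying Young's rule and Kostka positivity (\cite[Section 2.11]{Sagan}). The case $k=1$ is also consistent with this: $V_1=\one^\perp$, which is exactly $V_{[n-1,1]}$.
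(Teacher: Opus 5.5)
Your proof is correct and has the same structure as the paper's: you identify $M_k$ with $\operatorname{Ind}_{\mathfrak S_{n-k}}^{\mathfrak S_n}\mathbf 1$, show that $V_\tau$ occurs in $M_k$ but not in $M_{k-1}$, and conclude by Schur's lemma that every intertwiner $D_r$ kills any copy of $V_\tau$. The only difference is that you get the occurrence criterion from Young's rule and Kostka positivity for $M^{[n-k,1^k]}$, while the paper uses Frobenius reciprocity and repeated branching down to $\mathfrak S_{n-k}$; both yield the same first-row characterization.
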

\begin{proof}
The action of $\mathfrak S_n$ on $\Om_k$ is transitive. The stabilizer $J_k$ of
$(1,\ldots,k)$ fixes these vertices pointwise and is isomorphic to $\mathfrak S_{n-k}$.
The map
\[
 \pi J_k\longmapsto(\pi(1),\ldots,\pi(k))
\]
is an $\mathfrak S_n$-equivariant bijection. Hence
$M_k\cong\operatorname{Ind}_{J_k}^{\mathfrak S_n}\mathrm{triv}$.
By Frobenius reciprocity, the multiplicity of $V_\nu$ in $M_k$ equals the
multiplicity of the trivial representation in
$\operatorname{Res}_{J_k}^{\mathfrak S_n}V_\nu$.
By repeated branching, this multiplicity counts the sequences of $k$
corner-box removals from $\nu$ that result in the one-row diagram $[n-k]$.
Such a sequence exists if and only if $\nu_1\ge n-k$. Thus
\begin{equation}\label{eq:occurrence}
 V_\nu\text{ occurs in }M_k\quad\Longleftrightarrow\quad\nu_1\ge n-k.
\end{equation}
In particular, $V_\tau$ occurs in $M_k$ and not in $M_{k-1}$.

Choose a copy $U\cong V_\tau$ in $M_k$. The intertwining relation shows
that a nonzero restriction $D_r|_U$ would be injective by irreducibility,
with image isomorphic to $V_\tau$ in $M_{k-1}$, which is impossible.
Therefore $D_r|_U=0$ for every $r$, so $U\subseteq V_k$.
\end{proof}

\subsection{An exact energy identity}
For this section, let $f\in M_k$ be arbitrary. Fix all coordinates of the variable $x\in \Omega_k$ except coordinate $r$ to the corresponding entries of $b\in\Om_{k-1}$ (this is equivalent to requiring that deleting the $r$-th coordinate of $x$ yields $b$). This gives a
function of one vertex, which we extend by zero where it would repeat a
fixed vertex. Explicitly, define $g_{r,b}\in\R^n$ by
\[
 g_{r,b}(i)=
 \begin{cases}
 f(b_1,\ldots,b_{r-1},i,b_r,\ldots,b_{k-1}),
       &i\notin\{b_1,\ldots,b_{k-1}\},\\
 0,&i\in\{b_1,\ldots,b_{k-1}\}.
 \end{cases}
\]
Set
\[
 \SG(f)=\sum_{r=1}^k\sum_{b\in\Om_{k-1}}
 \mathcal E_{G,\strep}(g_{r,b}),
\]
where, for $g\in\R^n$,
\[
 \mathcal E_{G,\strep}(g)
 =\sum_{i<j}w_{ij}\bigl(g(i)-g(j)\bigr)^2.
\]
When $f\in V_k$, the condition $D_rf=0$ makes every $g_{r,b}$ a zero-sum
vector. Therefore, $\SG(f)$ is a sum of energies for
$L(G,[n-1,1])$. Our aim is to bound this sum from above in terms of
$\mathcal E_{G,\rho_k}(f)$. A Rayleigh quotient argument will then compare
$\lambda_{\min}(G,V_k)$ with $\lambda_{\min}(G,[n-1,1])$.

Define
\[
 \CG(f)=\sum_{x\in\Om_k}f(x)^2
 \sum_{r=1}^k\sum_{s\ne r}w_{x_rx_s}.
\]
Also define the contribution from transpositions of two vertices present
in the tuple by
\begin{equation}\label{eq:Eswap}
 \Eswaps(f)=\frac12\sum_{x\in\Om_k}\sum_{r<s}w_{x_rx_s}
 \bigl(f(x)-f((x_r\,x_s)\cdot x)\bigr)^2.
\end{equation}
The inner sum in \eqref{eq:Eswap} is over pairs of coordinates of $x$,
rather than all pairs of vertices in $[n]$.

\begin{lemma}\label{lem:identity}
For every weighted graph $G$ and every $f\in M_k$,
\[
 \mathcal E_{G,\rho_k}(f)=\SG(f)-\CG(f)+\Eswaps(f).
\]
\end{lemma}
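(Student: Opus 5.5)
We expand $\mathcal E_{G,\rho_k}(f)$ edge by edge and sort the terms by how the transposition meets a tuple. Since $\rho_k$ is a permutation representation,
\[
 \mathcal E_{G,\rho_k}(f)=\sum_{i<j}w_{ij}\bigl\|f-\rho_k((ij))f\bigr\|^2
 =\frac12\sum_{x\in\Om_k}\sum_{i\ne j}w_{ij}\bigl(f(x)-f((ij)\cdot x)\bigr)^2 .
\]
For a fixed $x$ and an ordered pair $(i,j)$, there are three possibilities. If neither $i$ nor $j$ is a coordinate of $x$, the term vanishes. If both are coordinates, say $i=x_r$ and $j=x_s$, these terms sum exactly to $\Eswaps(f)$; the factor $\frac12$ and the restriction $r<s$ account for the ordering. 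The remaining case is that exactly one of them is a coordinate, say $x_r$, and the other is a vertex $v\notin x$. Counting both orders, these terms give
\[
 \sum_{x\in\Om_k}\sum_{r=1}^k\sum_{v\notin x}w_{x_rv}\bigl(f(x)-f(x^{r\to v})\bigr)^2,
\]
where $x^{r\to v}$ replaces the $r$-th coordinate by $v$, with the $\frac12$ cancelled by the two orders.

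Next we regroup this middle sum by $r$ and by $b=x$ with coordinate $r$ deleted. For fixed $r$ and $b$, the pair $\{x_r,v\}$ ranges over unordered pairs of vertices outside $b$. Each pair appears twice, once from $x$ and once from $x^{r\to v}$, which cancels the remaining factor. The sum therefore becomes
\[
 \sum_{r}\sum_{b}\ \sum_{\{i,j\}\cap b=\emptyset}w_{ij}\bigl(g_{r,b}(i)-g_{r,b}(j)\bigr)^2 .
\]

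We compare this with $\mathcal E_{G,\strep}(g_{r,b})$. That energy also contains pairs with $j\in b$ and $i\notin b$, which contribute $w_{ij}g_{r,b}(i)^2$ because $g_{r,b}(j)=0$. Pairs inside $b$ contribute zero. Hence the middle sum equals
\[
 \SG(f)-\sum_{r,b}\sum_{i\notin b}g_{r,b}(i)^2\sum_{j\in b}w_{ij}.
\]
In the subtracted term, $i\notin b$ is $x_r$ for the tuple $x$ obtained by inserting $i$ into $b$ at position $r$. The vertices $j\in b$ are then the $x_s$ with $s\ne r$. So the subtracted term equals $\sum_x f(x)^2\sum_r\sum_{s\ne r}w_{x_rx_s}=\CG(f)$.

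Combining the three cases gives the identity. The only delicate step is the bookkeeping of the factors of $2$ in the one-coordinate case, namely the ordered versus unordered pairs and the double appearance of each $(x,x^{r\to v})$ pair. I would verify this carefully against the case $k=1$, where $\Eswaps$ and $\CG$ vanish and the identity reduces to $\mathcal E_{G,\rho_1}=\mathcal E_{G,\strep}$.
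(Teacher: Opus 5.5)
Your route is the paper's: split by how many endpoints of the transposition lie in $x$, regroup the one-endpoint terms by the $r$-complement $b$, complete each inner sum to all of $[n]$, and identify the boundary correction with $\CG(f)$. The completion step and the identification of $\CG(f)$ are right as written. However, the factor-of-$2$ bookkeeping you flagged is wrong in three places.

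First, $\rho_k((ij))$ is an orthogonal involution, so $\langle f,(I-\rho_k((ij)))f\rangle=\frac12\norm{f-\rho_k((ij))f}^2$. The correct starting point is therefore
\[
 \mathcal E_{G,\rho_k}(f)=\frac12\sum_{i<j}w_{ij}\norm{f-\rho_k((ij))f}^2
 =\frac14\sum_{x\in\Om_k}\sum_{i\ne j}w_{ij}\bigl(f(x)-f((ij)\cdot x)\bigr)^2 .
\]
Your display is twice this. For $k=1$ it gives $2\mathcal E_{G,\strep}$, so the test you proposed would have caught it.

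Second, from your display the two-coordinate terms are $\frac12\sum_x\sum_{r\ne s}w_{x_rx_s}\bigl(f(x)-f((x_r\,x_s)\cdot x)\bigr)^2=\sum_x\sum_{r<s}w_{x_rx_s}\bigl(f(x)-f((x_r\,x_s)\cdot x)\bigr)^2=2\Eswaps(f)$, not $\Eswaps(f)$. Your $\frac12$ is used up in passing from ordered pairs to $r<s$, and nothing is left to match the $\frac12$ in \eqref{eq:Eswap}.

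Third, your middle sum $\sum_x\sum_r\sum_{v\notin x}$, regrouped by $(r,b)$, runs over ordered pairs of distinct vertices outside $b$. So it equals $2\sum_{r,b}\sum_{\{i,j\}\cap b=\emptyset}w_{ij}\bigl(g_{r,b}(i)-g_{r,b}(j)\bigr)^2$. There is no ``remaining factor'' to absorb this double count, since the $\frac12$ was already cancelled by the two orders.

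These three slips cancel, which is why you land on the correct identity, but the chain of equalities as written does not establish it. Starting from the correct normalization repairs everything. The two-coordinate terms are then exactly $\Eswaps(f)$. The one-coordinate terms are $\frac12\sum_x\sum_r\sum_{v\notin x}w_{x_rv}\bigl(f(x)-f(x^{r\to v})\bigr)^2$, which is the paper's \eqref{eq:split}. Now the double appearance of each unordered pair genuinely cancels this $\frac12$, giving $\sum_{r,b}\sum_{\{i,j\}\cap b=\emptyset}$, i.e.\ \eqref{eq:reindex} written over unordered pairs. Your last two steps then go through unchanged.
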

\begin{proof}
We expand the left-hand side and transform its terms in three steps.

\noindent\emph{Step 1: Expand the transposition sum.}
By definition, we have
\begin{align*}
 \mathcal E_{G,\rho_k}(f)
 &=\sum_{x\in\Om_k}\sum_{i<j}w_{ij}
 f(x)\bigl(f(x)-f((ij)\cdot x)\bigr)\\
 &=\frac12\sum_{x\in\Om_k}\sum_{i<j}w_{ij}
 \bigl(f(x)-f((ij)\cdot x)\bigr)^2.
\end{align*}
For a fixed $x$, transpositions whose endpoints both lie outside its entries
fix $x$. If exactly one endpoint is present, it has the form $(x_r\,j)$
with $j\notin\{x_1,\ldots,x_k\}$. The remaining transpositions have both
endpoints present and contribute \eqref{eq:Eswap}. Replacing the $r$-th coordinate by $j$, we obtain
\begin{equation}\label{eq:split}
\begin{aligned}
 \mathcal E_{G,\rho_k}(f)
 ={}&\frac12\sum_{x\in\Om_k}\sum_{r=1}^k
 \sum_{j\notin\{x_1,\ldots,x_k\}}w_{x_rj}
 \bigl(f(x)-f(x_1,\ldots,j,\ldots,x_k)\bigr)^2\\
 &+\Eswaps(f).
\end{aligned}
\end{equation}

\noindent\emph{Step 2: Fix all coordinates except one.}
Interchange the sums over $x$ and $r$ in \eqref{eq:split}. For a fixed
$r$, group the tuples $x$ by their $r$-complement $b\in\Om_{k-1}$,
obtained by deleting coordinate $r$, and put $i=x_r$. We can then write
the inner sum symmetrically over $i,j\notin\{b_1,\ldots,b_{k-1}\}$.
The added terms $j=i$ are zero. Hence
\begin{equation}\label{eq:reindex}
\begin{aligned}
 \mathcal E_{G,\rho_k}(f)
 ={}&\frac12\sum_{r=1}^k\sum_{b\in\Om_{k-1}}
 \sum_{i,j\notin\{b_1,\ldots,b_{k-1}\}}w_{ij}
 \bigl(g_{r,b}(i)-g_{r,b}(j)\bigr)^2\\
 &+\Eswaps(f).
\end{aligned}
\end{equation}

\noindent\emph{Step 3: Complete each inner sum to all vertices.}
For fixed $r,b$, the vector $g_{r,b}$ vanishes on the entries of $b$.
The terms with both indices among those entries are therefore zero.
The two orderings of each pair with exactly one index among those entries
have equal contributions, since $w_{ij}=w_{ji}$. Thus
\begin{equation}\label{eq:complete}
\begin{aligned}
 &\frac12\sum_{i,j\notin\{b_1,\ldots,b_{k-1}\}}w_{ij}
 \bigl(g_{r,b}(i)-g_{r,b}(j)\bigr)^2\\
 &\quad=\frac12\sum_{i,j=1}^n w_{ij}
 \bigl(g_{r,b}(i)-g_{r,b}(j)\bigr)^2
 -\sum_{i\notin\{b_1,\ldots,b_{k-1}\}}
   \sum_{j\in\{b_1,\ldots,b_{k-1}\}} w_{ij}g_{r,b}(i)^2\\
 &\quad=\mathcal E_{G,\strep}(g_{r,b})
 -\sum_{i\notin\{b_1,\ldots,b_{k-1}\}}
   \sum_{j\in\{b_1,\ldots,b_{k-1}\}} w_{ij}g_{r,b}(i)^2.
\end{aligned}
\end{equation}
Summing the subtracted terms over all $r$ and $b$, and rearranging by
the participating $k$-tuples $x$, we get
\begin{align}
 &\sum_{r=1}^k\sum_{b\in\Om_{k-1}}
 \sum_{i\notin\{b_1,\ldots,b_{k-1}\}}
 \sum_{j\in\{b_1,\ldots,b_{k-1}\}}w_{ij}g_{r,b}(i)^2\notag\\
 &\qquad=\sum_{x\in\Om_k}f(x)^2\sum_{r=1}^k\sum_{s\ne r}w_{x_rx_s}
 =\CG(f).\label{eq:correction}
\end{align}
Substituting \eqref{eq:complete} and \eqref{eq:correction} into
\eqref{eq:reindex} yields
\begin{align*}
 \mathcal E_{G,\rho_k}(f)
 &=\sum_{r=1}^k\sum_{b\in\Om_{k-1}}
 \mathcal E_{G,\strep}(g_{r,b})-\CG(f)+\Eswaps(f)\\
 &=\SG(f)-\CG(f)+\Eswaps(f). \qedhere
\end{align*}
\end{proof}

\subsection{A spectral lower bound by averaging}
For a $k$-element subset $B\subseteq[n]$, put
\[
 v_G(B)=\sum_{\substack{i,j\in B\\i\ne j}}w_{ij}.
\]
Let $c_k(G)=\max_{|B|=k}v_G(B)$.

\begin{lemma}\label{lem:reduction}
For every weighted graph $G$ and every $1\le k \le n-1$,
\begin{equation}\label{eq:direct}
 \lambda_{\min}(G,V_k)\ge k\gamma(G)-c_k(G).
\end{equation}
\end{lemma}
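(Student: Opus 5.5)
Fix $f\in V_k$ with $f\neq 0$. The plan is to bound each of the three terms in the identity of Lemma \ref{lem:identity} and then pass to the Rayleigh quotient. Since $\Eswaps(f)\ge0$, that lemma gives
\[
 \mathcal E_{G,\rho_k}(f)\ge \SG(f)-\CG(f).
\]
So it suffices to prove two inequalities,
\[
 \SG(f)\ge k\gamma\norm f^2
 \qquad\text{and}\qquad
 \CG(f)\le c_k(G)\norm f^2.
\]
Together they give $R_{G,\rho_k}(f)\ge k\gamma-c_k(G)$. Minimizing over $f\in V_k$ then yields \eqref{eq:direct}.

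\emph{Lower bound on $\SG$.} As noted before Lemma \ref{lem:identity}, the condition $D_rf=0$ says precisely that each $g_{r,b}$ sums to zero, so $g_{r,b}\in\one^\perp=V_{[n-1,1]}$. The variational characterization of $\gamma=\lambda_{\min}(G,[n-1,1])$ on the ordinary graph Laplacian then gives
\[
 \mathcal E_{G,\strep}(g_{r,b})\ge\gamma\norm{g_{r,b}}^2.
\]
For each fixed $r$, the map $x\mapsto(b,x_r)$, with $b$ the $r$-complement of $x$, is a bijection from $\Om_k$ onto pairs $(b,i)$ with $i\notin b$. Hence
\[
 \sum_{b}\norm{g_{r,b}}^2=\norm f^2 .
\]
Summing over $r=1,\ldots,k$ gives $\SG(f)\ge k\gamma\norm f^2$.

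\emph{Upper bound on $\CG$.} For each $x\in\Om_k$, the inner weight $\sum_{r}\sum_{s\ne r}w_{x_rx_s}$ is exactly $v_G(\{x_1,\ldots,x_k\})$, summed over ordered pairs of distinct elements of a $k$-set. It is therefore at most $c_k(G)$. This gives $\CG(f)\le c_k(G)\norm f^2$.

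There is no real obstacle once Lemma \ref{lem:identity} is in hand. The only points to check are that the zero-sum property holds for every $g_{r,b}$, including the zero entries placed on the coordinates of $b$, which is exactly what $D_rf=0$ gives, and that the normalization of $v_G$ over ordered pairs matches $\CG$. Both are direct from the definitions.
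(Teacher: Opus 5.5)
Your proof is correct and follows the paper's argument. It uses the identity of Lemma \ref{lem:identity} with the swap energy dropped, the fiber-norm identity $\sum_b\norm{g_{r,b}}^2=\norm f^2$, the zero-sum property coming from $D_rf=0$, and the bound $\CG(f)\le c_k(G)\norm f^2$. The only difference is cosmetic: you sum $\mathcal E_{G,\strep}(g_{r,b})\ge\gamma\norm{g_{r,b}}^2$ directly over all $(r,b)$, while the paper phrases the same step as a weighted average of Rayleigh quotients and then picks one $g_{r,b}$ lying below that average. Your version is slightly cleaner and needs no special treatment of the pairs with $g_{r,b}=0$.
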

\begin{proof}
Fix $0\ne f\in V_k$. For each fixed $r$, inserting $i$ in position $r$ is a bijection from the
pairs $(b,i)$ with $i\notin\{b_1,\ldots,b_{k-1}\}$ to $\Om_k$. Hence
\begin{align*}
 \sum_{b\in\Om_{k-1}}\norm{g_{r,b}}^2
 &=\sum_{b\in\Om_{k-1}}
 \sum_{i\notin\{b_1,\ldots,b_{k-1}\}}
 f(b_1,\ldots,b_{r-1},i,b_r,\ldots,b_{k-1})^2\\
 &=\sum_{x\in\Om_k}f(x)^2=\norm f^2.
\end{align*}
Summing over $r$ gives
\begin{equation}\label{eq:totalnorm}
 \sum_{r=1}^k\sum_{b\in\Om_{k-1}}\norm{g_{r,b}}^2=k\norm f^2.
\end{equation}
For the pairs $(r,b)$ with $g_{r,b}\ne0$, define
\[
 \alpha_{r,b}=\frac{\norm{g_{r,b}}^2}{k\norm f^2}.
\]
These weights are positive and sum to $1$ by \eqref{eq:totalnorm}.
Averaging the Rayleigh quotients with these weights gives
\begin{align}
 \sum_{\substack{r,b:\\g_{r,b}\ne0}}
 \alpha_{r,b}R_{G,\strep}(g_{r,b})
 &=\frac{1}{k\norm f^2}
 \sum_{r=1}^k\sum_{b\in\Om_{k-1}}
 \mathcal E_{G,\strep}(g_{r,b})\notag\\
 &=\frac{\SG(f)}{k\norm f^2}.\label{eq:average}
\end{align}
Rearranging the identity of Lemma \ref{lem:identity} and dropping the
nonnegative swap energy, we obtain
\begin{align*}
 \SG(f)
 &=\mathcal E_{G,\rho_k}(f)+\CG(f)-\Eswaps(f)\\
 &\le \mathcal E_{G,\rho_k}(f)+\CG(f).
\end{align*}
The correction satisfies
\begin{equation}\label{eq:Cbound}
 \CG(f)\le c_k(G)\norm f^2.
\end{equation}
Indeed, for each $x\in\Om_k$, its set of entries
$B_x=\{x_1,\ldots,x_k\}$ has size $k$, so
\[
 \sum_{r=1}^k\sum_{s\ne r}w_{x_rx_s}=v_G(B_x)\le c_k(G).
\]
Multiplying each such inequality by $f(x)^2$ and summing gives
\[
 \CG(f)=\sum_{x\in\Om_k}f(x)^2v_G(B_x)
 \le c_k(G)\sum_{x\in\Om_k}f(x)^2=c_k(G)\norm f^2.
\]
Substituting \eqref{eq:Cbound} and collecting terms, we obtain
\begin{align*}
 \SG(f)&\le \mathcal E_{G,\rho_k}(f)+c_k(G)\norm f^2\\
 &=\bigl(R_{G,\rho_k}(f)+c_k(G)\bigr)\norm f^2.
\end{align*}
Consequently, the weighted average in \eqref{eq:average} is at most
$(R_{G,\rho_k}(f)+c_k(G))/k$. One of the Rayleigh quotients is no larger than that
average, so there exist $r,b$ with $g_{r,b}\ne0$ and
\[
 R_{G,\strep}(g_{r,b})
 \le\frac{R_{G,\rho_k}(f)}{k}+\frac{c_k(G)}{k}.
\]
The condition $D_rf=0$ gives
\[
 \sum_{i=1}^n g_{r,b}(i)=(D_rf)(b)=0.
\]
Thus the selected vector is nonzero and perpendicular to the constants.
Its Rayleigh quotient therefore comes from the representation $[n-1,1]$,
and
\[
 \gamma(G)\le R_{G,\strep}(g_{r,b})
 \le\frac{R_{G,\rho_k}(f)}{k}+\frac{c_k(G)}{k}.
\]
Rearranging gives $R_{G,\rho_k}(f)\ge k\gamma(G)-c_k(G)$. Taking the minimum over
${0\ne f\in V_k}$ proves \eqref{eq:direct}.
\end{proof}

\subsection{Bounding the error term}
Our next step is to bound the error term $c_k(G)$ in terms of the spectral
gap $\gamma(G)$. This will eventually
allow us to control it relative to the main term $k\gamma(G)$ in
Lemma \ref{lem:reduction}.

Write $d_G(i)=\sum_{j\ne i}w_{ij}$ for the weighted degree of $i$, and
let $\Delta(G)=\max_i d_G(i)$.

\begin{lemma}\label{lem:small}
For every weighted graph $G$ and every $1\le k \le n-1$,
\begin{equation}\label{eq:ck}
 c_k(G)\le k\Delta(G)-k\gamma(G)\left(1-\frac{k}{n}\right).
\end{equation}
\end{lemma}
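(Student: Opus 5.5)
Fix a $k$-subset $B\subseteq[n]$ attaining $c_k(G)=v_G(B)$, and test the ordinary Laplacian $L(G,\strep)$ on the centered indicator of $B$. Put
\[
 g=\one_B-\frac{k}{n}\one ,
\]
which is orthogonal to the constants. Hence $g\in V_{[n-1,1]}$ and $\mathcal E_{G,\strep}(g)\ge\gamma(G)\norm g^2$.

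The first step is to compute both sides. Adding a constant does not change the energy, so $\mathcal E_{G,\strep}(g)=\mathcal E_{G,\strep}(\one_B)$. Only edges with exactly one endpoint in $B$ contribute, so this equals the cut weight
\[
 w(B,B^c)=\sum_{i\in B}\sum_{j\notin B}w_{ij}.
\]
For the norm,
\[
 \norm g^2=k\Bigl(1-\frac kn\Bigr)^2+(n-k)\frac{k^2}{n^2}=k\Bigl(1-\frac kn\Bigr).
\]
Therefore
\[
 w(B,B^c)\ge\gamma(G)\,k\Bigl(1-\frac kn\Bigr).
\]

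The second step expresses the cut in terms of degrees. Summing degrees over $B$ counts each internal edge twice, in both orders, and each cut edge once:
\[
 \sum_{i\in B}d_G(i)=v_G(B)+w(B,B^c).
\]
Since $v_G(B)$ sums over ordered pairs $i\ne j$, the factor of two is already absorbed. The left side is at most $k\Delta(G)$, so
\[
 c_k(G)=v_G(B)\le k\Delta(G)-w(B,B^c)\le k\Delta(G)-k\gamma(G)\Bigl(1-\frac kn\Bigr).
\]

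No step is a real obstacle. The only points needing care are the ordered-pair convention in $v_G$, so that no factor of two is lost, and the norm computation of the centered indicator.
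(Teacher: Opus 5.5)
Your proof is correct and follows essentially the same route as the paper. The paper also splits $\one_B=\frac kn\one+h$ (your $g$ is its $h$), bounds the cut energy below by $\gamma(G)\,k(1-k/n)$, and rewrites the cut as $\sum_{i\in B}d_G(i)-v_G(B)$; the only cosmetic difference is that you fix a maximizing $B$ at the outset, whereas the paper takes the maximum over $B$ at the end.
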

\begin{proof}
Fix $B\subseteq[n]$ with $|B|=k$, and decompose its indicator into its
trivial and $[n-1,1]$ components:
\[
 \one_B=\frac{k}{n}\one+h.
\]
Here $h\perp\one$ and, by the Pythagorean theorem,
\[
 \norm h^2=k-\frac{k^2}{n}.
\]
The trivial component has zero energy. Since $h$ belongs to $[n-1,1]$,
we have
\[
 \mathcal E_{G,\strep}(\one_B)
 =\mathcal E_{G,[n-1,1]}(h)
 \ge\gamma(G)\left(k-\frac{k^2}{n}\right).
\]
On the other hand, by definition of the energy,
\[
 \mathcal E_{G,\strep}(\one_B)
 =\sum_{i\in B}\sum_{j\notin B}w_{ij}
 =\sum_{i\in B}d_G(i)-v_G(B).
\]
Rearranging and using the bound on each weighted degree yields
\begin{align*}
 v_G(B)
 &=\sum_{i\in B}d_G(i)-\mathcal E_{G,\strep}(\one_B)\\
 &\le k\Delta(G)-k\gamma(G)\left(1-\frac{k}{n}\right).
\end{align*}
Taking the maximum over $B$ proves \eqref{eq:ck}.
\end{proof}

Combining Lemmas \ref{lem:reduction} and \ref{lem:small}, we obtain
\[
 \lambda_{\min}(G,V_k)
 \ge k\left[\gamma(G)\left(1-\frac{k}{n}\right)
          +\bigl(\gamma(G)-\Delta(G)\bigr)\right].
\]
For the complete graph $G=K_n$ with unit edge weights, we have
$\gamma(K_n)=n$ and $\Delta(K_n)=n-1$, so in this case $\gamma(G)-\Delta(G)$ is negligible relative to $\gamma(G)$ (and, in fact, positive). Hence, for $k\le n/2$, we get the desired bound 
\[
 \lambda_{\min}(K_n,V_k)
 \ge   \frac{k}{2}\gamma(K_n).
\]

For a general weighted graph $G$, following Hermon--Kozma--Salez \cite{HKS}, our strategy is to find
an auxiliary graph $G'$ sufficiently close to a multiple of $K_n$ for these
estimates to be effective, while its Laplacian spectrum on the regular
representation is controlled by that of $G$ through their dynamical
comparison principle. This will allow us to transfer the lower bound back
to $G$ and obtain the estimate in Theorem \ref{thm:first-row}.

\subsection{The Hermon--Kozma--Salez comparison}\label{subsec:heat}
For $t\ge0$, let
\[
 H_t=e^{-tL(G,\strep)}
\]
be the heat kernel of $G$ at time $t$. Let $G_t$, the associated
\emph{heat-kernel graph}, be the weighted graph with edge weights
$H_t(i,j)$ for $i\ne j$.

\begin{lemma}\label{lem:heat}
Assume $\gamma=\gamma(G)>0$. For every $t\ge0$, the matrix $H_t$ is symmetric and stochastic. Its largest
eigenvalue on $\one^\perp$ is $e^{-t\gamma}$, and
\begin{equation}\label{eq:Hgap}
 \gamma(G_t)=1-e^{-t\gamma}.
\end{equation}
Moreover, $\Delta(G_t)\le1$.
\end{lemma}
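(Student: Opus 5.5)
The plan is to work throughout with the spectral decomposition of the ordinary graph Laplacian $L=L(G,\strep)$. This matrix is real symmetric, so we diagonalize it in an orthonormal basis $\one/\sqrt n=u_1,u_2,\ldots,u_n$ with eigenvalues $0=\mu_1<\gamma=\mu_2\le\cdots\le\mu_n$. Here $\mu_2>0$ because $\gamma>0$.

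Then $H_t=\sum_i e^{-t\mu_i}u_iu_i^{T}$, and symmetry of $H_t$ is immediate. Stochasticity has two parts.
\begin{itemize}
\item Row sums: $L\one=0$ gives $H_t\one=\one$.
\item Nonnegativity of entries: the off-diagonal entries of $-L$ are the weights $w_{ij}\ge0$. Write $-tL=-tdI+tA$, where $d=\Delta(G)$ and $A=dI-L$ has nonnegative entries. Then $H_t=e^{-td}e^{tA}$, and $e^{tA}$ is entrywise nonnegative by its power series.
\end{itemize}
Since $\one^\perp$ is spanned by $u_2,\ldots,u_n$, the largest eigenvalue of $H_t$ there is $e^{-t\mu_2}=e^{-t\gamma}$.

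For \eqref{eq:Hgap}, we compute the ordinary Laplacian of the graph $G_t$. Its weighted degree at $i$ is $\sum_{j\ne i}H_t(i,j)=1-H_t(i,i)$, by stochasticity. Hence
\[
L(G_t,\strep)=\operatorname{diag}\bigl(1-H_t(i,i)\bigr)-\bigl(H_t-\operatorname{diag}(H_t(i,i))\bigr)=I-H_t.
\]
The eigenvalues of $I-H_t$ on $\one^\perp$ are $1-e^{-t\mu_i}$ for $i\ge2$. The smallest of these is $1-e^{-t\gamma}$, which gives $\gamma(G_t)=1-e^{-t\gamma}$.

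Finally, $\Delta(G_t)=\max_i\bigl(1-H_t(i,i)\bigr)\le1$, because the diagonal entries of $H_t$ are nonnegative. The only step needing any care is the entrywise nonnegativity of $H_t$, which the shift argument above handles.
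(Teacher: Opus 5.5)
Your proposal is correct and follows the paper's proof essentially step by step: the spectral decomposition of $L(G,\strep)$, entrywise nonnegativity of $H_t$ via a diagonal shift and the power series, and the identification $L(G_t,\strep)=I-H_t$. The only cosmetic difference is that you shift by exactly $\Delta(G)$ where the paper takes $a>\Delta(G)$; this is equally valid, since the diagonal entries $\Delta(G)-d_G(i)$ of the shifted matrix are still nonnegative.
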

\begin{proof}
Choose an orthonormal eigenbasis $u_0,\ldots,u_{n-1}$ of $L(G,\strep)$, with
$u_0=n^{-1/2}\one$. Write $L(G,\strep)u_j=\theta_j u_j$.
Then $\theta_0=0$ and $\min_{j\ge1}\theta_j=\gamma$. Moreover,
\[
 H_tu_j=e^{-t\theta_j}u_j.
\]
This identifies the eigenvalues and gives $H_t\one=\one$.
Symmetry follows from symmetry of $L(G,\strep)$.

Choose $a>\Delta(G)$. The entries of $aI-L(G,\strep)$ are
nonnegative, and
\[
 H_t=e^{-at}\sum_{m=0}^{\infty}\frac{t^m}{m!}
 \bigl(aI-L(G,\strep)\bigr)^m.
\]
Thus $H_t$ has nonnegative entries, and by applying it to $\one$ we see that it is stochastic. The weighted degree
of vertex $i$ in $G_t$ is
\[
 d_{G_t}(i)=\sum_{j\ne i}H_t(i,j)=1-H_t(i,i)\le1.
\]
The off-diagonal entries and diagonal entries of the ordinary Laplacian
therefore give
\[
 L(G_t,\strep)=I-H_t.
\]
The eigenvalues already computed now yield \eqref{eq:Hgap}.
\end{proof}

\begin{lemma}[Hermon--Kozma--Salez comparison]\label{lem:comparison}
For every orthogonal representation $\rho$ of $\mathfrak S_n$, and every $t\ge0$,
\begin{equation}\label{eq:HKS}
 L(G_t,\rho)\le t\,L(G,\rho).
\end{equation}
\end{lemma}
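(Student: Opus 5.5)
The plan is to follow the dynamical comparison argument of Hermon--Kozma--Salez, working in the regular representation. Let $P_s$ denote the interchange semigroup $e^{-sL(G,\rho)}$ in the representation $\rho$. Let $X_s$ be the random permutation obtained by running the interchange process on $G$ for time $s$, so that $P_s=\mathbb E[\rho(X_s)]$. Write $\xi_s(i)$ for the position at time $s$ of the label starting at $i$. The key structural fact we use is that, for each fixed label, $\xi_s(i)$ is a continuous-time random walk with generator $-L(G,\strep)$. Hence $\mathbb P(\xi_t(i)=j)=H_t(i,j)$, and the weight of edge $\{i,j\}$ in $G_t$ is exactly the probability that the walk from $i$ sits at $j$ at time $t$.

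The goal is to show that, for every vector $f$,
\[
\sum_{i<j}H_t(i,j)\,\norm{f-\rho((ij))f}^2\le t\sum_{i<j}w_{ij}\norm{f-\rho((ij))f}^2 .
\]
This suffices because $\langle f,(I-\rho(\pi))f\rangle=\tfrac12\norm{f-\rho(\pi)f}^2$ for any involution $\pi$.

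First we rewrite the left-hand side path-wise. Fix $i\neq j$ and consider the random times at which the trajectory of a fixed particle jumps. We want to express the transposition $(i\,\xi_t(i))$ as a product of the transpositions performed along the trajectory of particle $i$ up to time $t$, conjugated appropriately. Then the telescoping and triangle inequality for $\norm{f-\rho(\cdot)f}$ (which is conjugation-invariant in norm after moving $\rho(\sigma)$ across) give
\[
\norm{f-\rho((i\,\xi_t(i)))f}\le\sum_{\text{jumps}}\norm{g-\rho((ab))g}
\]
for suitable translates $g=\rho(\sigma)f$.

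The cleaner HKS route, which we would actually follow, is via the semigroup: differentiate $\Phi(s)=\mathbb E\,\norm{f-\rho(\sigma_s)f}^2$ in a suitable coupling. Alternatively, and this is the version we commit to, we use the \emph{octopus-free} formulation. One shows $\frac{d}{ds}\sum_{i<j}H_s(i,j)\,\mathcal{Q}_{ij}(f)\le\sum_{i<j}w_{ij}\mathcal{Q}_{ij}(f)$, where $\mathcal{Q}_{ij}(f)=\norm{f-\rho((ij))f}^2$, and then integrates from $0$ to $t$ using $H_0(i,j)=0$ off the diagonal.

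The derivative is $-\sum_{i<j}(L(G,\strep)H_s)(i,j)\,\mathcal Q_{ij}$. Expanding it, the claim reduces to an inequality of the form
\[
\sum_{i,j,k}H_s(i,j)w_{jk}\bigl(\mathcal Q_{ik}-\mathcal Q_{ij}\bigr)\le 2\sum w_{jk}\mathcal Q_{jk}.
\]
This is where the group structure enters. The identity $(ik)=(jk)(ij)(jk)$ and the invariance of $\norm{\cdot}$ under $\rho$ give $\mathcal Q_{ik}(f)=\mathcal Q_{ij}(\rho((jk))f)$. However, this moves $f$, so the inequality is not termwise.

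This is the main obstacle. A termwise bound $\mathcal Q_{ik}\le(\sqrt{\mathcal Q_{ij}}+\dots)^2$ loses constants. HKS handle it by a genuine probabilistic coupling: they run the interchange process and note that $\rho(X_s)^{-1}\rho((i\,\xi_s(i)))$-type quantities form a supermartingale-like object. We would therefore reproduce their proof of \cite[Proposition 3]{HKS}. That proof works for the left-regular representation, hence for every $\rho$, since every orthogonal representation is a subrepresentation of a direct sum of copies of the regular one and \eqref{eq:HKS} is preserved under restriction and direct sums. So the plan is to verify that their statement, phrased for the regular representation, is exactly \eqref{eq:HKS} there, and then deduce the general case by this decomposition.
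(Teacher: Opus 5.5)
Your committed route is the paper's route: take the Hermon--Kozma--Salez comparison on the regular representation $\R[\mathfrak S_n]$, then pass to an arbitrary $\rho$ by restricting to irreducible summands and taking orthogonal direct sums. The exploratory material before it (path-wise telescoping, the derivative inequality) does not close, as you note yourself. It can be dropped, since both treatments import the core inequality rather than reprove it.

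The step that is actually missing is the one you wave through as ``verify that their statement \dots{} is exactly \eqref{eq:HKS} there''. It is not exactly \eqref{eq:HKS}. Their Proposition 3 is stated for an irreducible symmetric stochastic matrix $P$, with comparison graph $G_P$ (weights $P_{ij}$) and kernel $P_s=e^{-s(I-P)}$. In that setting every vertex jumps at total rate at most $1$. Here, by contrast, $G$ has arbitrary rates, $G_t$ comes from $e^{-tL(G,\strep)}$, and $G$ may be disconnected. This normalization is the only content of the paper's proof beyond the citation.

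To close the gap:
\begin{itemize}
\item Choose $a>\Delta(G)$ and set $P=I-\frac1a L(G,\strep)$. This matrix is symmetric and stochastic, with off-diagonal entries $w_{ij}/a$.
\item By linearity in the weights, $L(G_P,\rho)=\frac1a L(G,\rho)$.
\item Since $P_{at}=e^{-tL(G,\strep)}=H_t$, we have $G_{P_{at}}=G_t$.
\item Applying the comparison at time $s=at$ gives $L(G_t,\R[\mathfrak S_n])\le at\cdot\frac1a L(G,\R[\mathfrak S_n])=t\,L(G,\R[\mathfrak S_n])$.
\item If $P$ is reducible, apply the comparison to $(1-\varepsilon)P+\varepsilon J/n$, where $J$ is the all-ones matrix, and let $\varepsilon\to0$; both sides are continuous in $\varepsilon$.
\end{itemize}
With this added, your decomposition step finishes the proof exactly as in the paper.
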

\begin{proof}
Hermon--Kozma--Salez \cite[Section 2.3, Proposition 3]{HKS} prove a
comparison of interchange-process Dirichlet forms for symmetric stochastic
matrices. We adapt it here to arbitrary edge rates.

For a symmetric stochastic matrix $Q$, write $G_Q$ for the graph with
edge weights $Q_{ij}$ for $i\ne j$. Let $P$ be an irreducible symmetric
stochastic matrix, and put $P_s=e^{-s(I-P)}$. Their comparison, for every real-valued function $F$ on
$\mathfrak S_n$, is
\begin{equation*}
 \mathcal E_{G_{P_s},\R[\mathfrak S_n]}(F)
 \le s\,\mathcal E_{G_P,\R[\mathfrak S_n]}(F).
\end{equation*}
By continuity, this also holds when $P$ is not irreducible.

The above inequality can be written as an operator inequality

\begin{equation}
    \label{eq:HKSoriginal}
     L(G_{P_s},\R[\mathfrak S_n])
 \le s\, L(G_P,\R[\mathfrak S_n]).
\end{equation}

Given our weighted graph $G$, choose $a>\Delta(G)$ and put
\[
 P=I-\frac1aL(G,\strep).
\]
Its off-diagonal entries are $w_{ij}/a$ and its diagonal entries are
$1-d_G(i)/a$. Thus $P$ is symmetric and stochastic. At time $s=at$,
\[
 P_{at}=e^{-at(I-P)}=e^{-tL(G,\strep)}=H_t.
\]
The weights of $G_P$ are those of $G$ divided by $a$, so
\eqref{eq:HKSoriginal} gives
\[
 L(G_t,\R[\mathfrak S_n])
 \le at\, L(G_P,\R[\mathfrak S_n])
 =t\,L(G,\R[\mathfrak S_n]).
\]
Every irreducible representation of $\mathfrak S_n$ occurs in $\R[\mathfrak S_n]$.
Restricting this inequality to an irreducible summand proves
\eqref{eq:HKS} there; taking orthogonal direct sums proves it for every
$\rho$.
\end{proof}

\subsection{Proof of the first-row bound}
We now recall and prove:

\begin{customthm}{\ref{thm:first-row}}
There is a universal constant $c>0$ such that the following holds.
Let $G$ be a weighted graph on $[n]$ and put
$\gamma=\lambda_{\min}(G,[n-1,1])$. If $\tau\vdash n$ has first row
$\tau_1=n-k$, where $1\le k\le\lfloor n/2\rfloor$, then
\[
 \lambda_{\min}(G,\tau)\ge ck\gamma.
\]
\end{customthm}

\begin{proof}
We may take $c=1/(6\log3)$.
If $\gamma=0$, the claim follows from the nonnegativity of $L(G,V_\tau)$.
Assume $\gamma>0$.

By Lemma \ref{lem:branch}, we may choose an invariant copy $U\cong V_\tau$ inside
$V_k$. Let $f\in U$ be a unit eigenvector of $L(G,\rho_k)|_U$ for its least
eigenvalue $\theta=\lambda_{\min}(G,\tau)$.
Set $t=(\log9)/\gamma$, so that $e^{-t\gamma}=1/9$.
By Lemma \ref{lem:heat}, we have $\gamma(G_t)=1-e^{-t\gamma} = 8/9$ and $\Delta(G_t)\le1$.
Since $k\le n/2$, Lemma \ref{lem:small} now gives
\[
 \frac{c_k(G_t)}{k}
 \le\Delta(G_t)-\gamma(G_t)\left(1-\frac{k}{n}\right)
 \le1-\frac89\cdot\frac12=\frac59.
\]
Applying Lemma \ref{lem:reduction} to $G_t$ yields
\[
 \lambda_{\min}(G_t,V_k)
 \ge k\gamma(G_t)-c_k(G_t)
 \ge\frac{8k}{9}-\frac{5k}{9}=\frac{k}{3}.
\]
Since $f\in V_k$, HKS comparison gives
\begin{align*}
 \lambda_{\min}(G,\tau)
 &=R_{G,\rho_k}(f)\ge\frac1t R_{G_t,\rho_k}(f)\\
 &\ge\frac1t\lambda_{\min}(G_t,V_k)
 \ge\frac{k}{3t}=\frac{k\gamma}{6\log3}.\qedhere
\end{align*}
\end{proof}

Combining Theorem \ref{thm:first-row} and Conjecture \ref{conj:two-thirds}, we get a general lower bound:

\begin{prop}\label{prop:all-rows-conditional}
Assume Conjecture \ref{conj:two-thirds}. There is a universal constant
$c>0$ such that, for every weighted graph $G$ and every nontrivial
partition $\tau\vdash n$,
\[
 \lambda_{\min}(G,\tau)\ge c(n-\tau_1)\gamma.
\]
\end{prop}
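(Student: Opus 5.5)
The plan is to split according to the size of $k=n-\tau_1$. Since $\tau$ is nontrivial, $k\ge1$. Let $c_0$ be the constant of Theorem~\ref{thm:first-row} and put $m=\lfloor n/2\rfloor$. The aim is to show the proposition holds with $c=c_0/3$. If $\gamma=0$ there is nothing to prove, because $L(G,\tau)$ is positive semidefinite. So we assume $\gamma>0$.

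\emph{Case 1: $1\le k\le m$.} Here Theorem~\ref{thm:first-row} applies directly and gives
\[
 \lambda_{\min}(G,\tau)\ge c_0 k\gamma\ge \frac{c_0}{3}k\gamma .
\]

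\emph{Case 2: $k>m$, equivalently $\tau_1<n-m$.} We compare with the balanced two-row partition $\sigma=[n-m,m]$. The first step is to check that $\sigma\trianglerighteq\tau$. The first prefix inequality holds because $\sigma_1=n-m>\tau_1$. For $r\ge2$, the prefix sum of $\sigma$ is already $n$, which is at least any prefix sum of $\tau$. Also $\sigma\ne[n]$, because $n\ge2$ forces $m\ge1$.

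Conjecture~\ref{conj:two-thirds} then gives $\lambda_{\min}(G,\tau)\ge\frac23\lambda_{\min}(G,\sigma)$. Since $\sigma_1=n-m$ with $1\le m\le\lfloor n/2\rfloor$, Theorem~\ref{thm:first-row} applies to $\sigma$ and yields $\lambda_{\min}(G,\sigma)\ge c_0 m\gamma$. Finally, $k\le n-1<2(m+1)$, so $k\le 2m+1\le 3m$ when $m\ge1$. Combining these,
\[
 \lambda_{\min}(G,\tau)\ge \frac{2c_0}{3}m\gamma\ge \frac{2c_0}{9}k\gamma .
\]
This is slightly weaker than in Case 1. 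Taking $c=2c_0/9$ then covers both cases uniformly. The introduction instead uses $m\ge k/2$, which holds when $n$ is even; either choice of constant works.

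No step is a serious obstacle. The only points needing care are the dominance verification in Case 2 and the elementary inequality relating $m$ to $k$, which fixes the universal constant.
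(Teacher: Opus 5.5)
Your proof is correct and follows essentially the paper's route: split at $k=\lfloor n/2\rfloor$, compare with $[n-m,m]$ via Conjecture~\ref{conj:two-thirds}, and apply Theorem~\ref{thm:first-row} to that two-row partition. One small correction to your side remark: $k\le n-1\le 2m$ holds for every $n$, not only for even $n$, so $m\ge k/2$ always and you get $c=c_0/3$ as in the paper, rather than the weaker $2c_0/9$.
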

\begin{proof}
We may take $c=1/(18\log3)$.
Put $k=n-\tau_1$. If $1\le k\le\lfloor n/2\rfloor$,
the proof of Theorem \ref{thm:first-row} gives the stronger constant $1/(6\log3)$.
Suppose now that $k>n/2$, and put
\[
 m=\lfloor n/2\rfloor,\qquad \sigma=[n-m,m].
\]
This partition dominates $\tau$: its first row is at least $\tau_1$, and
the sum of its first two rows is already $n$. The conjecture and
the explicit bound proved in Theorem \ref{thm:first-row}, applied to $\sigma$, give
\[
 \lambda_{\min}(G,\tau)
 \ge\frac23\lambda_{\min}(G,\sigma)
 \ge\frac23\frac{m\gamma}{6\log3}.
\]
Since $k\le n-1\le2m$, this is at least $k\gamma/(18\log3)$.
\end{proof}

\subsection{An unconditional bound for every first-row length}
We now develop a weaker version of the inequality in Proposition \ref{prop:all-rows-conditional} which does not assume  Conjecture \ref{conj:two-thirds}. The price to pay is a loss of a $\log n$ factor in the bound. This lower bound is a direct consequence of the Hermon--Kozma--Salez approach. It will enable us to prove an unconditional bound for the mixing time.

Let $K_n$ denote the unweighted complete graph. Its representation
Laplacians are scalar, and their eigenvalues admit a useful elementary
lower bound.

\begin{lemma}\label{lem:complete-first-row}
If $\tau\vdash n$ and $k=n-\tau_1$, then $L(K_n,\tau)=\theta_\tau I$,
where $\theta_\tau\ge nk/2$.
\end{lemma}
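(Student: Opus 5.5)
The plan is to use the Jucys--Murphy / content formula for the complete graph and then a combinatorial estimate on contents. First, $L(K_n,\tau)=\binom n2 I-\rho_\tau\bigl(\sum_{i<j}(ij)\bigr)$. The sum of all transpositions is central in $\R[\mathfrak S_n]$, so by Schur's lemma it acts on $V_\tau$ as a scalar. Alternatively, and more in the spirit of the paper, $K_n$ is quasi-complete with $a_2=\cdots=a_n=1$. Lemma \ref{lem:quasi-complete-spectrum} then says every Gelfand--Tsetlin vector is an eigenvector with eigenvalue $\sum_{k=2}^n(k-1-c_T(k))$. Since $\sum_k c_T(k)$ is the total content of the diagram, independent of $T$, all these eigenvalues coincide and
\[
 \theta_\tau=\binom n2-\sum_{x\in\tau}\operatorname{cont}(x).
\]
Here we used $c_T(1)=0$.

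It remains to show $\binom n2-\sum_{x\in\tau}\operatorname{cont}(x)\ge nk/2$, where $k=n-\tau_1$. I would split the content sum by rows. Row $i$ contributes $\sum_{j=1}^{\tau_i}(j-i)=\binom{\tau_i}{2}+\tau_i-i\tau_i$, which is at most $\binom{\tau_i}2$ for $i\ge1$. The first row contributes exactly $\binom{\tau_1}{2}$, and rows $i\ge2$ contribute at most $\binom{\tau_i}{2}-\tau_i\le\binom{\tau_i}2$. Since $\binom{\cdot}{2}$ is superadditive, $\sum_{i\ge2}\binom{\tau_i}2\le\binom k2$. Hence
\[
 \theta_\tau\ge\binom n2-\binom{n-k}2-\binom k2=k(n-k).
\]
This gives the claim when $k\le n/2$, since then $n-k\ge n/2$.

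For $k>n/2$ this bound is too weak, and this is the main obstacle: the correction $-(i-1)\tau_i$ from lower rows must be used. Rows $i\ge2$ contribute at most $\binom{\tau_i}{2}-\tau_i$, so
\[
 \theta_\tau\ge k(n-k)+\sum_{i\ge2}\tau_i+\Bigl(\binom k2-\sum_{i\ge2}\binom{\tau_i}2\Bigr).
\]
The middle sum is $k$. For the last term, note that $\tau_i\le\tau_1=n-k$ for every $i$. Therefore
\[
 \sum_{i\ge2}\binom{\tau_i}2\le\frac{(n-k-1)}{2}\sum_{i\ge2}\tau_i=\frac{k(n-k-1)}2,
\]
and the last term is at least $\frac{k(k-1)}2-\frac{k(n-k-1)}2=\frac{k(2k-n)}{2}$. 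Combining,
\[
 \theta_\tau\ge k(n-k)+k+\frac{k(2k-n)}{2}=\frac{nk}{2}+k\ge\frac{nk}2.
\]
This bound in fact covers all $k$, which avoids the case split. I would double-check the row-contribution identity and the bound $\tau_i\le n-k$, the latter being immediate since rows are weakly decreasing.
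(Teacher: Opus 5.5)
Your proof is correct and follows essentially the paper's route: the scalar is $\binom n2-\sum_{x\in\tau}\operatorname{cont}(x)=\frac12\bigl(n^2-\sum_i\tau_i^2\bigr)+\sum_i(i-1)\tau_i$, which the paper quotes as the Frobenius character-ratio formula (you derive it from Lemma \ref{lem:quasi-complete-spectrum}). Your rowwise estimate using $\tau_i\le n-k$ is the paper's one-line bound $\sum_i\tau_i^2\le n\tau_1$, slightly sharpened to $nk/2+k$ by keeping $\sum_{i\ge2}(i-1)\tau_i\ge k$; as you noticed, the preliminary bound $k(n-k)$ and the case split are unnecessary.
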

\begin{proof}
The sum of all transpositions is central, so $L(K_n,\tau)=\theta_\tau I$.
By the Frobenius character-ratio formula \cite[Lemma 7]{DS},
\begin{align*}
 \theta_\tau
 &=\frac12\left(n^2-\sum_i\tau_i^2\right)
      +\sum_i(i-1)\tau_i\\
 &\ge\frac12\left(n^2-n\tau_1\right)=\frac{nk}{2}.
\end{align*}
Here we used $\sum_i\tau_i^2\le\tau_1\sum_i\tau_i=n\tau_1$ and
$\sum_i(i-1)\tau_i\ge0$.
\end{proof}

\begin{prop}\label{prop:all-rows-unconditional}
There is a universal constant $c>0$ such that, for every connected
weighted graph $G$ on $n\ge2$ vertices and every partition $\tau\vdash n$,
\begin{equation}\label{eq:all-rows-unconditional}
 \lambda_{\min}(G,\tau)
 \ge c\frac{(n-\tau_1)\gamma}{\log n}.
\end{equation}
\end{prop}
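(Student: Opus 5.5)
The plan is to compare $G$ with a multiple of the complete graph through the heat-kernel graph $G_t$, taking $t$ of order $(\log n)/\gamma$ so that $G_t$ is entrywise close to $\frac1n K_n$. Then Lemma \ref{lem:complete-first-row} supplies the eigenvalue lower bound, and Lemma \ref{lem:comparison} transfers it back to $G$ at the cost of the factor $1/t$. This produces exactly the loss of a $\log n$ factor.

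\textbf{Step 1: entrywise closeness.} Let $J$ be the all-ones matrix. Write $H_t=\frac1n J+E_t$, where $E_t=\sum_{j\ge1}e^{-t\theta_j}u_ju_j^{T}$ in the notation of the proof of Lemma \ref{lem:heat}. Each entry satisfies $|E_t(i,j)|\le\norm{E_t}_{\mathrm{op}}\le e^{-t\gamma}$. Choose $t=(\log(4n^2))/\gamma$, so that $e^{-t\gamma}=1/(4n^2)$. Then every off-diagonal weight of $G_t$ satisfies
\[
 H_t(i,j)\ge\frac1n-\frac1{4n^2}\ge\frac1{2n}.
\]
Since $n\ge2$, we have $t\le C_0(\log n)/\gamma$ for a universal $C_0$. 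We assume $\gamma>0$, which holds because $G$ is connected.

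\textbf{Step 2: domination by the complete graph.} Because representation Laplacians are monotone in the edge weights, Step 1 gives $L(G_t,\tau)\ge\frac1{2n}L(K_n,\tau)$. By Lemma \ref{lem:complete-first-row}, $L(K_n,\tau)=\theta_\tau I$ with $\theta_\tau\ge nk/2$, where $k=n-\tau_1$. Hence
\[
 \lambda_{\min}(G_t,\tau)\ge\frac{k}{4}.
\]
Note that this step needs no restriction on $k$, unlike Theorem \ref{thm:first-row}. When $\tau=[n]$ we have $k=0$, and the statement is trivial.

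\textbf{Step 3: transfer back to $G$.} Lemma \ref{lem:comparison} gives $L(G_t,\tau)\le tL(G,\tau)$. Therefore
\[
 \lambda_{\min}(G,\tau)\ge\frac{k}{4t}\ge\frac{k\gamma}{4C_0\log n},
\]
which proves \eqref{eq:all-rows-unconditional} with $c=1/(4C_0)$. For example, $\log(4n^2)\le4\log n$ for $n\ge2$, so one can take $c=1/16$.

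\textbf{Main obstacle.} The only delicate point is Step 1. Lemma \ref{lem:heat} controls only the spectrum of $H_t$, whereas Step 2 needs entrywise control. I would obtain it from the operator-norm bound on $E_t$ restricted to $\one^\perp$, which dominates each matrix entry. The cost is that $t$ must grow like $\log n/\gamma$ rather than $1/\gamma$, and this is precisely where the logarithmic loss enters.
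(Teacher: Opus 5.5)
Your proposal is correct and follows the paper's proof essentially step for step. The paper uses the same ingredients: the heat-kernel graph at a time of order $(\log n)/\gamma$, the entrywise bound obtained from the operator norm of $H_s-J/n$, domination of $\frac{1}{2n}L(K_n,\tau)$, Lemma \ref{lem:complete-first-row}, and the transfer back to $G$ via Lemma \ref{lem:comparison}.

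The only difference is the choice of time. The paper takes $e^{-s\gamma}=1/(2n)$, which already gives $H_s(i,j)\ge\frac1n-\frac1{2n}=\frac1{2n}$ and the bound $k\gamma/(4\log(2n))$. Your choice $e^{-t\gamma}=1/(4n^2)$ is stronger than needed, but harmless: it only changes the constant, to $c=1/16$ instead of $1/8$.
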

\begin{proof}
We may take $c=1/8$. More precisely, we prove
\begin{equation}\label{eq:all-rows-unconditional-explicit}
 \lambda_{\min}(G,\tau)
 \ge\frac{(n-\tau_1)\gamma}{4\log(2n)}.
\end{equation}
This implies \eqref{eq:all-rows-unconditional}, since
$\log(2n)\le2\log n$ for $n\ge2$.
Use the single-particle heat kernel $H_s$ and its associated graph $G_s$
from Lemma \ref{lem:heat}, at time
\[
 s=\frac{\log(2n)}{\gamma}.
\]
Let $J$ be the matrix all of whose entries are $1$. Since $J/n$ is
the orthogonal projection onto the constants, $H_s-J/n$ vanishes on
the constants and has eigenvalues $e^{-s\theta}$ on $\one^\perp$,
where $\theta\ge\gamma$ ranges over the nonzero eigenvalues of
$L(G,\strep)$. Thus, for every $v\in\R^n$,
\[
 \left\|\left(H_s-\frac{J}{n}\right)v\right\|
 \le e^{-s\gamma}\norm v=\frac{\norm v}{2n}.
\]
Applying this to the coordinate vector $e_j$ and taking its $i$th
coordinate gives
\[
 \left|H_s(i,j)-\frac1n\right|\le\frac{1}{2n},
\]
so $H_s(i,j)\ge1/(2n)$ for every $i,j$.
The HKS comparison in Lemma \ref{lem:comparison} and edgewise
monotonicity therefore imply, on each Specht module,
\[
 sL(G,\tau)\ge L(G_s,\tau)
 \ge\frac{1}{2n}L(K_n,\tau).
\]
Writing $k=n-\tau_1$ and using Lemma \ref{lem:complete-first-row}, we get
\[
 s\lambda_{\min}(G,\tau)\ge\frac{\theta_\tau}{2n}
 \ge\frac{k}{4},
\]
which is \eqref{eq:all-rows-unconditional-explicit}.
\end{proof}

\section{Mixing of the interchange and exclusion processes}\label{sec:mixing}
We now apply the spectral bounds in the previous section to the analysis
of mixing times.

\subsection{A spectral bound for transitive group actions}\label{subsec:group-action-mixing}
We start by stating a general lemma on the connection between the mixing properties of a group action and eigenvalue bounds on the relevant part of the Laplacian spectrum. It can be found in a slightly different form in a book by Diaconis \cite[Chapter 3F, Lemmas 4 and 5]{Diaconis}, but for completeness we give a self-contained proof based on the argument of \cite[Corollary 5]{HKS}.

\begin{lemma}\label{lem:group-action-mixing}
Let a finite group $\Gamma$ act transitively on a finite set $X$, and let
$u$ be the uniform probability measure on $X$. For rates
$c(g)=c(g^{-1})\ge0$, consider the continuous-time Markov chain with generator
\[
 (\mathcal L f)(x)
 =\sum_{g\in\Gamma}c(g)\bigl(f(g^{-1}x)-f(x)\bigr),
\]
and transition probability matrix for time intervals of length $t$, $P_t=e^{t\mathcal L}$. Write the permutation representation as
\[
 \C^X\cong\C\one\oplus
 \bigoplus_{\rho\ne\mathbf1}V_\rho^{\oplus m_\rho},
\]
where $\rho$ runs over the nontrivial irreducible unitary representations
that occur, and $m_\rho$ is their multiplicity.
Define $L_\rho=\sum_{g\in\Gamma}c(g)(I-\rho(g))$ 
and $\lambda_\rho=\lambda_{\min}(L_\rho)$.
Then, for every $x\in X$ and $t\ge0$,
\begin{equation}\label{eq:group-action-mixing}
 4\|P_t(x,\cdot)-u\|_{\mathrm{TV}}^2
 \le\sum_{\rho\ne\mathbf1}m_\rho d_\rho e^{-2t\lambda_\rho}
\end{equation}
where  $d_\rho=\dim_{\C} V_\rho$.
\end{lemma}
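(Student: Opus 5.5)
The plan is the standard $L^2$ (chi-square) bound followed by a Fourier decomposition of $\C^X$. Put $N=|X|$. By Cauchy--Schwarz,
\[
 4\|P_t(x,\cdot)-u\|_{\mathrm{TV}}^2=\Bigl(\sum_y|P_t(x,y)-\tfrac1N|\Bigr)^2\le N\sum_y\bigl(P_t(x,y)-\tfrac1N\bigr)^2 .
\]
So it suffices to bound $N\norm{P_t(x,\cdot)-u}_2^2$.

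First I will check the needed structure of $\mathcal L$.
\begin{itemize}
\item Let $\lambda(g)$ be the permutation operator $(\lambda(g)f)(x)=f(g^{-1}x)$. Then $\mathcal L=-\sum_g c(g)(I-\lambda(g))$.
\item Since $c(g)=c(g^{-1})$ and $\lambda(g)^*=\lambda(g^{-1})$, $\mathcal L$ is self-adjoint, and $P_t$ is a symmetric matrix.
\item $\mathcal L$ commutes with the $\Gamma$-action on $\C^X$ only when $c$ is a class function. I will not need that commutation. I only need that $\mathcal L$ is the image of the group-algebra element $-\sum_g c(g)(e-g)$ under the permutation representation. Hence $\mathcal L$ preserves every $\Gamma$-isotypic component. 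On each copy of $V_\rho$ in a fixed decomposition $\C^X=\C\one\oplus\bigoplus_\rho\bigoplus_{i=1}^{m_\rho}W_{\rho,i}$ (with $W_{\rho,i}\cong V_\rho$), it acts as $-L_\rho$ transported by an isometric intertwiner.
\end{itemize}
Consequently $P_t=e^{t\mathcal L}$ acts on each $W_{\rho,i}$ with operator norm $e^{-t\lambda_\rho}$, and fixes $\one$.

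Next, write $\delta_x-\frac1N\one=\sum_{\rho,i}\pi_{\rho,i}\delta_x$, using orthogonal projections onto the $W_{\rho,i}$. By the symmetry of $P_t$, $P_t(x,\cdot)-u=P_t(\delta_x-\tfrac1N\one)$. Therefore
\[
 \norm{P_t(x,\cdot)-u}^2\le\sum_{\rho\ne\mathbf1}e^{-2t\lambda_\rho}\sum_{i}\norm{\pi_{\rho,i}\delta_x}^2 .
\]

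The key step is to show $\sum_i\norm{\pi_{\rho,i}\delta_x}^2=m_\rho d_\rho/N$, independent of $x$. Let $\Pi_\rho=\sum_i\pi_{\rho,i}$ be the projection onto the $\rho$-isotypic component. This projection is canonical and commutes with $\Gamma$. Then $\norm{\Pi_\rho\delta_x}^2=\langle\delta_x,\Pi_\rho\delta_x\rangle=\Pi_\rho(x,x)$. This is constant in $x$ because $\Pi_\rho$ commutes with the transitive action, so its diagonal is $\Gamma$-invariant. Averaging over $x$ gives $\operatorname{tr}\Pi_\rho/N=m_\rho d_\rho/N$.

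Multiplying by $N$ then yields \eqref{eq:group-action-mixing}. Orthogonality of the distinct $W_{\rho,i}$ is needed here so that the norms add. A fixed orthogonal decomposition inside each isotypic component supplies this.

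The main obstacle I expect is the point in the first step: $\mathcal L$ need not be $\Gamma$-equivariant. One has to argue that it nonetheless preserves each chosen copy $W_{\rho,i}$ and is unitarily equivalent to $-L_\rho$ there. This holds because the group-algebra element acts through $\rho$ on any subrepresentation isomorphic to $V_\rho$. The rest is bookkeeping.
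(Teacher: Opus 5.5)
Your proposal is correct and follows essentially the same route as the paper. Both arguments use Cauchy--Schwarz to reduce to the $\chi^2$ distance, bound the decay on each nontrivial isotypic component by $e^{-2t\lambda_\rho}$, and use the fact that the diagonal $\Pi_\rho(x,x)$ of the isotypic projection is constant by transitivity, so it averages to $m_\rho d_\rho/|X|$ (the paper's $S_\rho(x)=D_\rho$ in its $L^2(u)$ normalization). The only differences are cosmetic: you bound $P_t(\delta_x-|X|^{-1}\one)$ via operator norms on each copy of $V_\rho$ rather than expanding the kernel in an eigenbasis, and you spell out why $\mathcal L$ preserves each copy (it lies in the image of the group algebra), a point the paper leaves implicit.
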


\begin{proof}
Equip $\C^X$ with the inner product of $L^2(X,u)$:
\[
 \langle f,h\rangle
 =\frac1{|X|}\sum_{x\in X}f(x)\overline{h(x)}.
\]
The permutation action is unitary, and the symmetry
$c(g)=c(g^{-1})$ makes $L=-\mathcal L$ self-adjoint.
Moreover, $L\one=0$, so $P_t=e^{-tL}$ fixes the constants.

For each $\rho$, let $W_\rho$ be the sum of its $m_\rho$
copies, of dimension $D_\rho=m_\rho d_\rho$.
Since the permutation representation is unitary, its isotypic
components $W_\rho$ are mutually orthogonal.
Choose an orthonormal eigenbasis
$\phi_{\rho,1},\ldots,\phi_{\rho,D_\rho}$ of $W_\rho$,
with eigenvalues $\lambda_{\rho,\ell}\ge\lambda_\rho$.
Transitivity implies that the only invariant functions
are the constants, so these eigenvectors, together with
$\one$, form an orthonormal basis of $L^2(X,u)$.

Since $P_t=e^{-tL}$, we have
\[
 P_t\phi_{\rho,\ell}
 =e^{-t\lambda_{\rho,\ell}}\phi_{\rho,\ell}.
\]
Expanding an arbitrary function $f$ in the above basis
therefore gives
\[
 (P_tf)(x)
 =\langle f,\one\rangle
 +\sum_{\rho\ne\mathbf1}\sum_{\ell=1}^{D_\rho}
   e^{-t\lambda_{\rho,\ell}}
   \langle f,\phi_{\rho,\ell}\rangle\phi_{\rho,\ell}(x).
\]
By the definition of the inner product,
\[
 \langle f,\phi_{\rho,\ell}\rangle
 =\sum_{y\in X}u(y)f(y)\overline{\phi_{\rho,\ell}(y)}.
\]
Similarly,
\[
 \langle f,\one\rangle=\sum_{y\in X}u(y)f(y).
\]
Substituting and comparing coefficients of $f(y)$ with
\[
 (P_tf)(x)=\sum_{y\in X}P_t(x,y)f(y),
\]
we obtain
\[
 P_t(x,y)
 =u(y)\left(
 1+\sum_{\rho\ne\mathbf1}\sum_{\ell=1}^{D_\rho}
 e^{-t\lambda_{\rho,\ell}}
 \phi_{\rho,\ell}(x)\overline{\phi_{\rho,\ell}(y)}
 \right).
\]
 Therefore,
\[
 \frac{P_t(x,y)}{u(y)}-1
 =\sum_{\rho\ne\mathbf1}\sum_{\ell=1}^{D_\rho}
   e^{-t\lambda_{\rho,\ell}}
   \phi_{\rho,\ell}(x)\overline{\phi_{\rho,\ell}(y)}.
\]
For a fixed $x$ we get, using the orthonormality:
\begin{align*}
 \sum_{y\in X}\frac{|P_t(x,y)-u(y)|^2}{u(y)}
 &=\sum_{y\in X}u(y)
   \left|\frac{P_t(x,y)}{u(y)}-1\right|^2\\
 &=\sum_{\rho\ne\mathbf1}\sum_{\ell=1}^{D_\rho}
   e^{-2t\lambda_{\rho,\ell}}|\phi_{\rho,\ell}(x)|^2\\
 &\le\sum_{\rho\ne\mathbf1}e^{-2t\lambda_\rho}
   \sum_{\ell=1}^{D_\rho}|\phi_{\rho,\ell}(x)|^2.
\end{align*}

For a fixed $\rho$, the sum
\[
 S_\rho(x):=\sum_{\ell=1}^{D_\rho}|\phi_{\rho,\ell}(x)|^2
\]
is independent of the choice of orthonormal basis of $W_\rho$.
The action of any $g\in\Gamma$ takes such a basis to another
orthonormal basis of $W_\rho$, so
$S_\rho(g^{-1}x)=S_\rho(x)$.
By transitivity, $S_\rho$ is constant on $X$.
Averaging over $X$ identifies its value:
\[
 S_\rho(x)
 =\sum_{y\in X}u(y)S_\rho(y)
 =\sum_{\ell=1}^{D_\rho}
   \sum_{y\in X}u(y)|\phi_{\rho,\ell}(y)|^2
=\sum_{\ell=1}^{D_\rho}\|\phi_{\rho,\ell}\|^2
 =D_\rho=m_\rho d_\rho.
\]
Consequently,
\[
 \sum_{y\in X}\frac{|P_t(x,y)-u(y)|^2}{u(y)}
 \le\sum_{\rho\ne\mathbf1}m_\rho d_\rho
      e^{-2t\lambda_\rho}.
\]

Finally, by the Cauchy--Schwarz inequality,
\begin{align*}
 4\|P_t(x,\cdot)-u\|_{\mathrm{TV}}^2
 &=\left(\sum_{y\in X}|P_t(x,y)-u(y)|\right)^2 \\
 &\le |X| \sum_{y\in X}|P_t(x,y)-u(y)|^2
 =\sum_{y\in X}\frac{|P_t(x,y)-u(y)|^2}{u(y)}.
\end{align*}

Combining the last two estimates proves the lemma.\enlargethispage{\baselineskip}
\end{proof}

\subsection{Mixing of the interchange process}
Let $(X_t)_{t\ge0}$ be the interchange process on a connected weighted
graph $G$. The state space of this process is $\mathfrak S_n$, and its generator is
\[
 (\mathcal L_G f)(\pi)
 =\sum_{i<j}w_{ij}\bigl(f((ij)\pi)-f(\pi)\bigr).
\]
Thus $\mathcal L_G=-L(G,\mathrm{reg})$, where $\mathrm{reg}$ denotes the
left regular representation. Let $P_t=e^{t \mathcal L_G}$ be the transition probability matrix for a time interval of length $t$,
and let $u(\eta)=1/n!$ be the uniform probability measure. Define
\begin{equation}\label{eq:d_G(t)}
 d_G(t)=\max_{\pi\in \mathfrak S_n}\frac12\sum_{\eta\in \mathfrak S_n}
       |P_t(\pi,\eta)-u(\eta)|
\end{equation}

and
\[
 t_{\mathrm{mix}}(G)=\inf\{t\ge0:d_G(t)\le1/4\}.
\]
Right translations commute with the generator, so the maximum in \eqref{eq:d_G(t)} can be replaced by taking the value at any $\pi$.

The regular representation decomposes as
\[
 \R[\mathfrak S_n]\cong
 \bigoplus_{\tau\vdash n}V_\tau^{\oplus d_\tau}.
\]
Apply Lemma \ref{lem:group-action-mixing} to the left action of
$\mathfrak S_n$ on itself, using the complexification of this representation,
with rate $w_{ij}$ for the transposition
$(ij)$ and zero rates for all other permutations. Since
$m_\tau=d_\tau$, we obtain, for every $t\ge0$,
\begin{equation}\label{eq:regular-trace}
 4d_G(t)^2
 \le\sum_{\substack{\tau\vdash n\\\tau\ne[n]}}
      d_\tau^2 e^{-2t\lambda_{\min}(G,\tau)}.
\end{equation}

\begin{lemma}\label{lem:dimension-layers}
For every $1\le k\le n-1$,
\[
 \sum_{\substack{\tau\vdash n\\\tau_1=n-k}}d_\tau^2\le n^{2k}.
\]
\end{lemma}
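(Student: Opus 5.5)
The plan is to bound each $d_\tau$ with $\tau_1=n-k$ by counting standard Young tableaux via the first row. A standard tableau of shape $\tau$ is determined by two pieces of data. The first is the set of $k$ entries lying outside the first row. The second is a standard filling of the skew part $\tau/(\tau_1)$, which is the partition $\bar\tau=(\tau_2,\tau_3,\ldots)\vdash k$ (after relabeling its entries in increasing order). This is an injection: the first row is filled increasingly by the complementary entries, and the rest of the tableau is recovered from $\bar\tau$ and the relabeling. Hence
\[
 d_\tau\le\binom nk d_{\bar\tau}.
\]

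Squaring and summing over $\tau$ with $\tau_1=n-k$, the map $\tau\mapsto\bar\tau$ is injective into partitions of $k$. Therefore
\[
 \sum_{\tau_1=n-k}d_\tau^2\le\binom nk^2\sum_{\bar\tau\vdash k}d_{\bar\tau}^2=\binom nk^2 k!,
\]
using $\sum_{\mu\vdash k}d_\mu^2=k!$ (the regular representation of $\mathfrak S_k$). Finally,
\[
 \binom nk^2 k!=\binom nk\cdot\frac{n!}{(n-k)!}\le\frac{n^k}{k!}\cdot n^k\le n^{2k}.
\]

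The only delicate point is that not every $\bar\tau\vdash k$ arises, since we need $\bar\tau_1\le n-k$. This is harmless: dropping that constraint only enlarges the right-hand side.

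One could instead argue via $M_k$. By the branching count in Lemma \ref{lem:branch}, $\tau$ with $\tau_1=n-k$ occurs in $M_k$ with multiplicity $d_{\bar\tau}$, so $d_\tau d_{\bar\tau}\le\dim M_k\le n^k$. However, this alone does not bound $d_\tau^2$, so the direct tableau count above is the route I would take. No step should be a real obstacle; the main point to check is the injectivity of the tableau encoding.
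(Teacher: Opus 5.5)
Your proof is correct, but it takes a different route from the paper's. The paper does not count tableaux. It uses the occurrence criterion \eqref{eq:occurrence} from Lemma \ref{lem:branch}: every $V_\tau$ with $\tau_1=n-k$ occurs in $M_k$. These modules are pairwise non-isomorphic, so their copies form a direct sum, which gives $\sum_{\tau_1=n-k}d_\tau\le\dim M_k=n!/(n-k)!\le n^k$. The lemma then follows from $\sum d_\tau^2\le\bigl(\sum d_\tau\bigr)^2$.

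Your closing remark that the $M_k$ route ``does not bound $d_\tau^2$'' is therefore mistaken. You never need to bound each $d_\tau^2$ separately. Your own multiplicity statement and $d_{\bar\tau}\ge1$ give $\sum_\tau d_\tau\le\sum_\tau d_\tau d_{\bar\tau}\le\dim M_k\le n^k$, and squaring finishes. Only the ``occurs at least once'' part is needed.

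Your injection $d_\tau\le\binom nk d_{\bar\tau}$, combined with $\sum_{\mu\vdash k}d_\mu^2=k!$, is fully elementary and avoids the Frobenius reciprocity and branching machinery. It also gives the sharper bound $\binom nk^2k!\le n^{2k}/k!$. The paper's argument is shorter only because \eqref{eq:occurrence} is already available there. The extra factor $k!$ you gain is not needed for Proposition \ref{prop:linear-to-mixing}.
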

\begin{proof}
The ordered-tuple module $M_k=\R^{\Om_k}$ has dimension
$$n(n-1)\cdots(n-k+1) \le n^k.$$ By \eqref{eq:occurrence}, it contains at least
one copy of every $V_\tau$ with $\tau_1=n-k$. Hence
\[
 \sum_{\tau_1=n-k}d_\tau\le\dim M_k\le n^k.
\]
The sum of the squares of these nonnegative dimensions is no greater
than the square of their sum.
\end{proof}

\begin{prop}\label{prop:linear-to-mixing}
There is a universal constant $C$ such that if 
 $a>0$ and
\begin{equation}\label{eq:all-layer-hypothesis}
 \lambda_{\min}(G,\tau)\ge a(n-\tau_1)\gamma
 \qquad\text{for every }\tau\ne[n]
\end{equation}
then 
\begin{equation}\label{eq:general-mixing}
 t_{\mathrm{mix}}(G)\le\frac{C\log n}{a\gamma}.
\end{equation}
\end{prop}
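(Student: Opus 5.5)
We will plug the hypothesis \eqref{eq:all-layer-hypothesis} into the trace bound \eqref{eq:regular-trace}, grouping the nontrivial partitions by $k=n-\tau_1\in\{1,\ldots,n-1\}$. For every $\tau$ with $\tau_1=n-k$ we have $e^{-2t\lambda_{\min}(G,\tau)}\le e^{-2tak\gamma}$. Lemma \ref{lem:dimension-layers} bounds the total weight of each layer by $n^{2k}$. This gives
\[
 4d_G(t)^2\le\sum_{k=1}^{n-1}n^{2k}e^{-2tak\gamma}
 =\sum_{k=1}^{n-1}q^k,\qquad q=n^2e^{-2ta\gamma}.
\]

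Next choose $t=\frac{C\log n}{a\gamma}$, which makes $q=n^{2-2C}$. If $q\le1/2$, the geometric series is bounded by $2q$, so
\[
 4d_G(t)^2\le2n^{2-2C}.
\]
For $n\ge2$ we need this to be at most $1/4$, which gives $d_G(t)\le1/4$. This holds once $2\cdot2^{2-2C}\le1/4$, i.e. $2^{2C-2}\ge8$, i.e. $C\ge5/2$. The same choice also ensures $q\le1/2$, since $2^{2-2C}\le1/8$. Taking $C=5/2$, monotonicity of $d_G$ in $t$ then gives $t_{\mathrm{mix}}(G)\le C\log n/(a\gamma)$.

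There is no real obstacle here; the only things to check are routine. The bound must hold uniformly in $n\ge2$, and this uses $\log n\ge\log2>0$, so that the factor $n^{2-2C}\le2^{2-2C}$. We also rely on $\gamma>0$. If $\gamma=0$, the right-hand side of \eqref{eq:general-mixing} is infinite and the claim is vacuous. Finally, we use that $d_G(t)$ is nonincreasing in $t$, which is standard for continuous-time chains.
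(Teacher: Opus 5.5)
Your argument is correct and is essentially the paper's proof: you group \eqref{eq:regular-trace} by $k=n-\tau_1$, apply Lemma \ref{lem:dimension-layers}, and sum the geometric series in $q=n^2e^{-2a\gamma t}$. The differences are cosmetic: the paper takes $t=(\log n+\tfrac12\log 5)/(a\gamma)$ so that $q=1/5$ exactly, which gives $C=3$, while your choice $t=C\log n/(a\gamma)$ with $q\le 1/8$ gives $C=5/2$; also, you do not need monotonicity of $d_G$, since $t_{\mathrm{mix}}$ is defined as an infimum.
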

\begin{proof}
We may take $C=3$. In fact, we prove the sharper estimate
\begin{equation}\label{eq:general-mixing-explicit}
 t_{\mathrm{mix}}(G)
 \le\frac{1}{a\gamma}\left(\log n+\frac12\log5\right).
\end{equation}
Grouping the right-hand side of \eqref{eq:regular-trace} by $k=n-\tau_1$ and using
Lemma \ref{lem:dimension-layers}, we obtain
\[
 4d_G(t)^2\le\sum_{k=1}^{n-1}n^{2k}e^{-2ak\gamma t}
 =\frac{q-q^n}{1-q}
\]
where $q=n^2e^{-2a\gamma t}$. For the value of $t$ on the right-hand side of \eqref{eq:general-mixing-explicit}, we have $q = n^2 e^{-(2 \log n + \log 5) }= \frac 1 5$, so
\[
 4d_G(t)^2 \le \frac{q}{1-q}=\frac 1 4
\]
Thus $d_G(t)\le1/4$, proving \eqref{eq:general-mixing-explicit}.
Since $n\ge2$,
\[
 \log n+\frac12\log5\le3\log n.
\]
Hence \eqref{eq:general-mixing} holds with $C=3$.
\end{proof}

Using Proposition \ref{prop:linear-to-mixing} and our spectral bounds, we can now prove the conditional and unconditional mixing time bounds.

\begin{customthm}{\ref{thm:mixing}}
Assume Conjecture \ref{conj:two-thirds}. There is a universal constant
$C>0$ such that every connected weighted graph $G$ on $n\ge2$ vertices,
with ordinary spectral gap $\gamma$, satisfies
\[
 t_{\mathrm{mix}}(G)\le C\frac{\log n}{\gamma}.
\]
In other words, Conjecture \ref{conj:two-thirds} implies Oliveira's
conjecture (Conjecture \ref{conj:oliveira}).
\end{customthm}

\begin{proof}
We may take $C=54\log3$. More precisely, we prove
\[
 t_{\mathrm{mix}}(G)\le
 \frac{18\log3}{\gamma}
 \left(\log n+\frac12\log5\right).
\]
The proof of Proposition \ref{prop:all-rows-conditional} gives
\eqref{eq:all-layer-hypothesis} with $a=1/(18\log3)$.
Apply \eqref{eq:general-mixing-explicit} from the proof of
Proposition \ref{prop:linear-to-mixing}.
Since $\log n+(1/2)\log5\le3\log n$ for $n\ge2$, the stated bound
follows with $C=54\log3$.
\end{proof}

\begin{hksmixingrestatement}
There is a universal constant $C>0$ such that every connected weighted
graph $G$ on $n\ge2$ vertices, with ordinary spectral gap $\gamma$, satisfies
\[
 t_{\mathrm{mix}}(G)\le C\frac{(\log n)^2}{\gamma}.
\]
\end{hksmixingrestatement}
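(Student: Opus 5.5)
The plan is to feed the unconditional spectral bound of Proposition \ref{prop:all-rows-unconditional} into Proposition \ref{prop:linear-to-mixing}. In its explicit form \eqref{eq:all-rows-unconditional-explicit}, Proposition \ref{prop:all-rows-unconditional} gives, for every connected $G$ on $n\ge2$ vertices and every $\tau\ne[n]$,
\[
 \lambda_{\min}(G,\tau)\ge\frac{(n-\tau_1)\gamma}{4\log(2n)}.
\]
This is exactly hypothesis \eqref{eq:all-layer-hypothesis} with
\[
 a=\frac{1}{4\log(2n)}.
\]

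The parameter $a$ now depends on $n$. This is harmless, because Proposition \ref{prop:linear-to-mixing} holds for every $a>0$ with a universal constant: its proof only uses $q=n^2e^{-2a\gamma t}$ together with the layer dimension bound of Lemma \ref{lem:dimension-layers}, and never uses that $a$ is fixed. So I will apply the explicit estimate \eqref{eq:general-mixing-explicit}, which gives
\[
 t_{\mathrm{mix}}(G)\le\frac{4\log(2n)}{\gamma}\left(\log n+\tfrac12\log5\right).
\]

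It remains to simplify the constants for $n\ge2$. We have $\log(2n)\le2\log n$ and $\log n+\tfrac12\log5\le3\log n$. Together these give $t_{\mathrm{mix}}(G)\le 24(\log n)^2/\gamma$, so one can take $C=24$.

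There is no real obstacle here; the substantive work was already done in Proposition \ref{prop:all-rows-unconditional} via the HKS comparison at time $\log(2n)/\gamma$. The only point needing care is the check that Proposition \ref{prop:linear-to-mixing} tolerates an $n$-dependent $a$, and its proof shows that it does.
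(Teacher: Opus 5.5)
Your proposal is correct and is essentially the paper's proof: the paper also feeds Proposition \ref{prop:all-rows-unconditional} into Proposition \ref{prop:linear-to-mixing} with an $n$-dependent $a$. The paper takes $a=c/\log n$ with $c=1/8$ rather than your $a=1/(4\log(2n))$, notes as you do that $a$ may depend on $n$ but not on $\tau$, and ends with the same constant $C=C_0/c=24$.
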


\begin{proof}
Let $c>0$ be the universal constant in
Proposition \ref{prop:all-rows-unconditional}, and put $a=c/\log n$.
Here $a$ depends on $n$, but not on the partition $\tau$.
For every nontrivial partition $\tau\vdash n$, Proposition \ref{prop:all-rows-unconditional} gives
\[
 \lambda_{\min}(G,\tau)
 \ge\frac{c(n-\tau_1)\gamma}{\log n}
 =a(n-\tau_1)\gamma.
\]
Thus the hypothesis of Proposition \ref{prop:linear-to-mixing} holds
with this choice of $a$. Applying that proposition, we obtain
\[
 t_{\mathrm{mix}}(G)
 \le\frac{C_0\log n}{a\gamma}
 =\frac{C_0}{c}\frac{(\log n)^2}{\gamma},
\]
where $C_0$ is universal. The theorem follows with $C=C_0/c$,
which is also universal.
\end{proof}

\subsection{An unconditional logarithmic bound for the exclusion process}\label{subsec:exclusion}
Let $\mathcal X_r=\{A\subseteq[n]:|A|=r\}$, $W_r=\R^{\mathcal X_r}$, and let $\rho_r^{\mathrm{EX}}$ be the natural permutation action of $\mathfrak S_n$ on $W_r$. The generator of the $r$-particle exclusion process is
\[
 (\mathcal L_G^{\mathrm{EX}(r)}f)(A)
 =\sum_{i\in A}\sum_{j\notin A}w_{ij}
   \bigl(f((A\setminus\{i\})\cup\{j\})-f(A)\bigr)
 =-L(G,\rho_r^{\mathrm{EX}})f(A).
\]
The uniform measure $u_r$ on $\mathcal X_r$ is stationary.
The map $A\mapsto A^c$ intertwines the processes with $r$ and $n-r$
particles, so their mixing times are equal. It therefore suffices to
consider $1\le r\le\lfloor n/2\rfloor$.

\begin{lemma}\label{lem:exclusion-decomposition}
For $0\le r\le\lfloor n/2\rfloor$, the orthogonal decomposition of $W_r$
into Specht modules is
\begin{equation}\label{eq:exclusion-decomposition}
 W_r\cong\bigoplus_{j=0}^r V_{[n-j,j]},
\end{equation}
where the summand for $j=0$ is the trivial representation $V_{[n]}$.
\end{lemma}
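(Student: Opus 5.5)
The plan is to identify $W_r$ with a Young permutation module and then apply Young's rule, as in the proof of Lemma \ref{lem:zero-eigenvalue}. The action of $\mathfrak S_n$ on $\mathcal X_r$ is transitive. The stabilizer of $\{1,\ldots,r\}$ is $\mathfrak S_r\times\mathfrak S_{n-r}$, so $W_r\cong\operatorname{Ind}_{\mathfrak S_{n-r}\times\mathfrak S_r}^{\mathfrak S_n}\mathrm{triv}=M^{[n-r,r]}$, where we use $r\le n-r$ to write the partition in decreasing order. By Young's rule \cite[Section 2.11]{Sagan}, the multiplicity of $V_\nu$ in $M^{[n-r,r]}$ is the Kostka number $K_{\nu,[n-r,r]}$.

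The next step is to compute these Kostka numbers. A semistandard tableau of shape $\nu$ and content $[n-r,r]$ uses only the entries $1$ and $2$, so $\nu$ has at most two rows. All $n-r$ ones must lie in the first row, since columns strictly increase. Hence $\nu=[n-j,j]$ with $n-j\ge n-r$, that is $j\le r$. For each such $j$ the tableau is unique: the first row holds $n-r$ ones followed by $r-j$ twos, and the second row holds $j$ twos. This filling is column-strict because $j\le n-r$. So $K_{[n-j,j],[n-r,r]}=1$ for $0\le j\le r$, and all other Kostka numbers vanish.

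An alternative route, avoiding Kostka numbers, is induction on $r$. Frobenius reciprocity together with Pieri's rule applied to $\operatorname{Ind}(\mathrm{triv}_{n-r}\otimes\mathrm{triv}_r)$ gives the decomposition directly: the shapes are obtained from $[n-r]$ by adding a horizontal strip of $r$ boxes, and these are exactly the $[n-j,j]$ with $j\le r$. Either way, the decomposition is multiplicity-free.

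Finally, the decomposition is orthogonal. $W_r$ is an orthogonal representation, and since the decomposition is multiplicity-free the isotypic components are mutually orthogonal and each is a single copy of $V_{[n-j,j]}$. The $j=0$ summand is the constants, which is the trivial representation. No step is a serious obstacle. The only point needing care is the hypothesis $r\le\lfloor n/2\rfloor$: it is needed so that $[n-r,r]$ is a genuine partition and so that all the shapes $[n-j,j]$ with $j\le r$ actually occur.
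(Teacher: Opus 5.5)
Your proof is correct. The paper gives no argument of its own for this lemma and simply cites Diaconis (Chapter 7B). Your version is self-contained and uses the same tool the paper already used in Lemma \ref{lem:zero-eigenvalue}: you identify $W_r$ with the Young permutation module $M^{[n-r,r]}$ and then apply Young's rule.

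The Kostka computation is right. That includes the check that the unique filling is column-strict, which holds because $j\le r\le n-r$. Your Schur's-lemma argument also justifies the word ``orthogonal'' in the statement: multiplicity-freeness makes each isotypic component a single copy of $V_{[n-j,j]}$, and these components are mutually orthogonal. The citation buys brevity, while your argument buys completeness at the cost of a paragraph.

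One small remark on the aside: Pieri's rule describes $\operatorname{Ind}(\mathrm{triv}_{n-r}\otimes\mathrm{triv}_r)$ directly. So neither induction on $r$ nor Frobenius reciprocity is needed there. In any case the aside is not used in your main argument.
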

\begin{proof}
See \cite[Chapter 7B, pp. 138--139]{Diaconis}.
\end{proof}

We now recall and prove:

\begin{exclusionrestatement}
There is a universal constant $C>0$ such that every connected weighted
graph $G$ on $n\ge2$ vertices, with ordinary spectral gap $\gamma$, satisfies
\[
 t_{\mathrm{mix}}^{\mathrm{EX}(r)}(G)
 \le\frac{C\log n}{\gamma}
 \qquad(1\le r\le n-1).
\]
\end{exclusionrestatement}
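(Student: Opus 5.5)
By the complementation symmetry $A\mapsto A^c$ noted above, it suffices to treat $1\le r\le\lfloor n/2\rfloor$. We then apply Lemma \ref{lem:group-action-mixing} to the transitive action of $\mathfrak S_n$ on $\mathcal X_r$. The rate is $w_{ij}$ for the transposition $(ij)$ and zero for every other permutation, so the generator matches $\mathcal L_G^{\mathrm{EX}(r)}$. By Lemma \ref{lem:exclusion-decomposition}, the nontrivial irreducible constituents of $W_r$ (after complexification) are $V_{[n-j,j]}$ for $1\le j\le r$, each with multiplicity one. Hence, for every starting set $A$ and every $t\ge0$,
\[
 4\|P_t(A,\cdot)-u_r\|_{\mathrm{TV}}^2
 \le\sum_{j=1}^r d_{[n-j,j]}\,e^{-2t\lambda_{\min}(G,[n-j,j])}.
\]

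We next control both ingredients. For the dimensions, use the crude bound $d_{[n-j,j]}\le\binom nj\le n^j$; alternatively, it follows from Lemma \ref{lem:dimension-layers}, since $d\le d^2$. For the eigenvalues, note that every $j$ in the sum satisfies $1\le j\le r\le\lfloor n/2\rfloor$ and that $[n-j,j]$ has first row $n-j$. Theorem \ref{thm:first-row} therefore applies unconditionally and gives $\lambda_{\min}(G,[n-j,j])\ge cj\gamma$, where one may take $c=1/(6\log3)$. This is the point where the restriction $k\le\lfloor n/2\rfloor$ in Theorem \ref{thm:first-row} is harmless: two-row partitions never leave its range once $r\le n/2$.

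Substituting these bounds and writing $q=n e^{-2c\gamma t}$, we get
\[
 4\|P_t(A,\cdot)-u_r\|_{\mathrm{TV}}^2\le\sum_{j\ge1}q^j=\frac{q}{1-q}
\]
whenever $q<1$. Choose $t=(\log n+\log5)/(2c\gamma)$, so that $q=1/5$. The right-hand side is then $1/4$, which gives total-variation distance at most $1/4$ from every starting state. Since $\log n+\log5\le4\log n$ for $n\ge2$, we obtain $t_{\mathrm{mix}}^{\mathrm{EX}(r)}(G)\le C\log n/\gamma$ with $C=2/c=12\log3$.

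The only delicate point is checking the hypotheses of Lemma \ref{lem:group-action-mixing} in this setting. The rates must be symmetric, which holds because transpositions are involutions. The eigenvalue relevant for each isotypic component must be $\lambda_{\min}(G,[n-j,j])$, and the real Specht module complexifies to the corresponding irreducible, so the multiplicity-one decomposition carries over. All the substantive spectral work is already contained in Theorem \ref{thm:first-row}.
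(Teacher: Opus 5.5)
Your proposal is correct and follows the paper's proof essentially step for step: you reduce to $r\le\lfloor n/2\rfloor$ by complementation, apply Lemma \ref{lem:group-action-mixing} with the multiplicity-one decomposition of $W_r$, bound $d_{[n-j,j]}\le\binom nj\le n^j$, invoke Theorem \ref{thm:first-row} with $c=1/(6\log3)$, and sum the geometric series at $q=1/5$, arriving at the same constant $C=12\log3$. One small slip in your aside: Lemma \ref{lem:dimension-layers} yields $d_{[n-j,j]}\le n^j$ by taking square roots of $d_{[n-j,j]}^2\le n^{2j}$, not via $d\le d^2$ (which would only give $n^{2j}$); your main argument via $\binom nj$ does not depend on this.
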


\begin{proof}
Put $a=1/(6\log3)$. We may take $C=12\log3$; more precisely, we prove that
\begin{equation}\label{eq:exclusion-mixing-explicit}
 t_{\mathrm{mix}}^{\mathrm{EX}(r)}(G)
 \le\frac{3\log3}{\gamma}\left(\log n+\log5\right).
\end{equation}
By particle-hole symmetry, assume $r\le n/2$. Apply
Lemma \ref{lem:group-action-mixing} to the action of $\mathfrak S_n$ on
$\mathcal X_r$, using the complexification of $W_r$, with rate $w_{ij}$ for the transposition $(ij)$ and
zero rates for all other permutations. This gives exactly the exclusion
generator displayed above. By Lemma \ref{lem:exclusion-decomposition},
the nontrivial summands are $V_{[n-j,j]}$, $1\le j\le r$, each with
multiplicity one and dimension $d_j=\dim V_{[n-j,j]}$. Consequently, if $K_t$ is the
transition probability matrix of the exclusion process, then for every initial set
$A\in\mathcal X_r$,
\begin{equation}\label{eq:exclusion-spectral-sum}
 4\|K_t(A,\cdot)-u_r\|_{\mathrm{TV}}^2
 \le\sum_{j=1}^r d_j e^{-2t\lambda_{\min}(G,[n-j,j])}.
\end{equation}

Taking dimensions in \eqref{eq:exclusion-decomposition} with $r=j$ gives
\[
 \binom nj=\sum_{i=0}^j d_i.
\]
Subtracting the corresponding identity for $j-1$, we obtain, for $j\ge1$,
\begin{equation}\label{eq:exclusion-dimensions}
 d_j=\binom nj-\binom n{j-1}\le\binom nj\le n^j.
\end{equation}
For $1\le j\le r$, the explicit estimate proved in
Theorem \ref{thm:first-row} gives
\[
 \lambda_{\min}(G,[n-j,j])\ge aj\gamma.
\]
Using \eqref{eq:exclusion-dimensions} in
\eqref{eq:exclusion-spectral-sum}, we obtain
\[
 4\|K_t(A,\cdot)-u_r\|_{\mathrm{TV}}^2
 \le\sum_{j=1}^r n^j e^{-2aj\gamma t}
 =\frac{q-q^{r+1}}{1-q},
\]
where $q=ne^{-2a\gamma t}$.
For the value of $t$ on the right-hand side of
\eqref{eq:exclusion-mixing-explicit}, we have
$q=ne^{-(\log n+\log5)}=\frac15$, so
\[
 4\|K_t(A,\cdot)-u_r\|_{\mathrm{TV}}^2
 \le\frac{q}{1-q}=\frac14.
\]
Thus the total-variation distance is at most $1/4$ for every
initial set $A$, proving \eqref{eq:exclusion-mixing-explicit}.
Since $n\ge2$,
\[
 \log n+\log5\le4\log n.
\]
Hence the claimed mixing-time bound holds with $C=12\log3$.
\end{proof}

\end{document}